\newcommand{\paramY}{\langle Y\rangle}
\newcommand{\cC}{\mathcal{C}}
\newcommand{\cD}{\mathcal{D}}
\newcommand{\cDv}{{\mathcal{D}_{\mathsf{v}}}}
\newcommand{\cH}{\mathcal{H}}
\newcommand{\cV}{\mathcal{V}}
\newcommand{\HH}{\mathbb{H}}
\newcommand{\Kor}{\mathrm{Kor}}
\DeclareMathOperator{\Cone}{Cone}
\DeclareMathOperator{\Lip}{Lip}
\newcommand{\cVv}{{\mathcal{V}_{\mathsf{v}}}}
\newcommand{\cVh}{{\mathcal{V}_{\mathsf{h}}}}
\newcommand{\Aff}{\mathsf{Aff}}
\DeclareMathOperator{\slope}{slope}
\DeclareMathOperator{\supp}{supp}
\DeclareMathOperator{\inter}{int}
\renewcommand{\H}{\mathbb{H}}
\newcommand{\R}{\mathbb{R}}
\newcommand{\Z}{\mathbb{Z}}
\newcommand{\N}{\mathbb{N}}
\newcommand{\from}{\colon}
\newcommand{\zero}{\mathbf{0}}
\newcommand{\ud}[0]{\,\mathrm{d}}
\newtheorem{thm}{Theorem}[section]
\newtheorem{lemma}[thm]{Lemma}
\newtheorem{cor}[thm]{Corollary}
\newtheorem{prop}[thm]{Proposition}
\theoremstyle{remark}
\newtheorem{defn}[thm]{Definition}
\title[The strong geometric lemma in $\H_1$]{The strong geometric lemma in the Heisenberg group}
\author{Vasileios Chousionis}
\address{Department of Mathematics, University of Connecticut}
\email{vasileios.chousionis@uconn.edu}
\author{Sean Li}
\address{Department of Mathematics, University of Connecticut}
\email{sean.li@uconn.edu}
\author{Robert Young}
\address{Courant Institute of Mathematical Sciences, New York University}
\email{ryoung@cims.nyu.edu}
\thanks{V.~C.\ was supported by Simons Foundation Collaboration grant 521845. R.~Y.\ was supported by NSF grant 2005609}
\date{\today}
\subjclass[2020]{Primary: 28A75, 53C17}
\begin{document}

\maketitle
\begin{abstract}
We prove that in the first Heisenberg group, unlike  Euclidean spaces and higher dimensional Heisenberg groups, the best possible exponent for the strong geometric lemma for intrinsic Lipschitz graphs is $4$ instead of $2$.  Combined with earlier work from \cite{CLY1} and \cite{CLY2}, our result completes the proof of the strong geometric lemma in Heisenberg groups. One key tool in our proof, and possibly of independent interest, is a suitable refinement of the foliated coronizations which first appeared in \cite{NY-Foliated}.
\end{abstract}

\section{Introduction}

According to Rademacher's theorem, Lipschitz surfaces in $\R^n$ infinitesimally resemble planes. However, for some more global questions, like the boundedness of singular integrals, the information we get from Rademacher's theorem is too qualitative. To answer these questions, we need to know that Lipschitz graphs can be effectively approximated by affine planes ``at most places and scales.'' The traditional way to quantify such a statement is via the notion of $\beta$--numbers, introduced by Jones in \cite{JonesSquare, JonesTSP}. Since Jones's work, $\beta$--numbers and several variants have played an important role in geometric harmonic analysis and geometric measure theory in $\R^n$ as well as in other spaces \cite{Oki, SchulTSP, LiSchul, LiSchul2, FFP, CLZ,Li-strat, DavidSchul, CLY1, CLY2}.  

Given  $E \subset \R^n$ and a ball $B(x,r)$, the $m$-dimensional $\beta$--number is defined as
\begin{equation*}
 \beta_{E}(x,r)=\inf_{L}  r^{-m} \int_{B(x,r)\cap E} \frac{d(y,L)}{r}  \; \ud  \mathcal{H}^{m}(y)
\end{equation*}
where $L$ is taken over all $m$--dimensional planes and $\mathcal{H}^m$ denotes the $m$--dimensional Hausdorff measure. Thus, $\beta_{E}(x,r)$ is an measures how flat the is the set $E \cap B(x,r)$ on average. This function is scale-invariant in the sense that $\beta_{t E}(t x,t r)=\beta_{ E}(x, r)$ for all $t>0$.
By a fundamental result of Dorronsoro \cite{dor}, if $E$ is an $m$--dimensional Lipschitz graph, then the squares of the $\beta$--numbers satisfy the following Carleson packing condition: there is a $c>0$ depending only on the Lipschitz constant such that
\begin{equation}
\label{do}
\int_0^R \int_{E \cap B(x,R) } \beta_{E} (y,r)^2 \ud \mathcal{H}^m (y) \frac{dr}{r} \le c R^m, \quad \mbox{ for }R>0, x\in E.
\end{equation}
This is known as the \textit{strong geometric lemma} with exponent $2$.

The strong geometric lemma for Lipschitz graphs has been one of the cornerstones in the theory of uniform rectifiability, which was originally developed by David and Semmes in the early 90s. In particular, an Ahlfors $m$--regular subset of $\R^n$ is called uniformly $m$--rectifiable if and only if it satisfies \eqref{do} for some $c>0$. David and Semmes in \cite{DS1,DS2} introduced an extensive suite of geometric and analytic notions in order to provide various characterizations of uniformly rectifiable sets in Euclidean spaces. They thus provided a rich and influential geometric foundation for the study of singular integral operators on lower dimensional subsets of $\R^n$.

In the last 20 years there have been systematic efforts toward the development of geometric measure theory on sub-Riemannian spaces, see e.g.~the lecture notes \cite{SCnotes,perttirev} for some recent surveys. In particular, this research program has grown substantially on Carnot groups, a class of nilpotent Lie groups that admit dilations, i.e., maps that scale the metric by a constant. Indeed, by a result of LeDonne \cite{LD}, Carnot groups are the \textit{only} locally compact geodesic spaces which admit dilations and are isometrically homogeneous. This makes these groups ideal environments to study geometric and analytic questions involving many different scales. The simplest examples of nonabelian Carnot groups are the $(2n+1)$--dimensional Heisenberg groups $\H_n$. 

To introduce meaningful notions of rectifiability in $\H_n$ (or any other Carnot group), one should first come up with the analogue of a Lipschitz graph in that setting. This is rather subtle because, for example, Heisenberg groups  cannot be viewed as Cartesian products of subgroups. One might consider Lipschitz images $\R^{k} \to \H_n$, but this approach only works when $k\le n$. Ambrosio and Kirchheim \cite{AK} proved that any Lipschitz image $f(\R^{k}) \subset \H_n$ has vanishing $\mathcal{H}^k$--measure for $k > n$.  Therefore one should introduce a notion of {\em intrinsic} graphs fitting the sub-Riemannian group structure. This was achieved by Franchi, Serapioni, and Serra Cassano in  \cite{FSS1}. Similarly to Euclidean Lipschitz graphs, intrinsic Lipschitz graphs in $\H_n$ satisfy a cone condition, which will be defined in the next section. 

Intrinsic Lipschitz graphs feature prominently in the newly emerging theories of low-codimensional rectifiability \cite{MSS,FSSCDiff,VitIntLip,SCnotes}. One reason for this is that, like Lipschitz graphs in $\R^n$, intrinsic Lipschitz graphs in $\H_n$ satisfy a version of Rade\-macher's theorem; they infinitesimally resemble planes almost everywhere \cite{FSSCDiff}. As in $\R^n$, answering questions about singular integrals and uniform rectifiability in $\H_n$ requires more quantitative bounds, which has led to the study of various notions of quantitative rectifiability in intrinsic Lipschitz graphs \cite{CFO1,NY1,NY-Foliated,FORig,Rig, CLY1, CLY2}.

Broadly speaking, the aforementioned works seek to obtain quantitative bounds on how intrinsic Lipschitz graphs can be approximated by vertical planes and vertical sets. As in the Euclidean case, these works are partly motivated by a singular integral operator which can be viewed as the natural analogue of the the (Euclidean) $1$--codimensional Riesz transform. Notably, and as in $\R^n$, this new singular integral is related to removability for Lipschitz harmonic functions in $\H_n$, see \cite{CFO2,FORiesz,CLY2}. 

One of the key objects in the study of uniform rectifiability in Heisenberg groups is the following  codimension--1 version of the $\beta$--numbers. For $E \subset \H_n, x\in \H_n,$ and $r>0$, we define
\begin{equation}\label{eq:def beta} \beta_{E}(x,r)=\inf_{L \in \mathsf{VP}}  r^{-2n-1} \int_{B(x,r)\cap E} \frac{d(y,L)}{r} \; \ud  \mathcal{H}^{2n+1}(y)
\end{equation}
where $\mathsf{VP}$ denotes the set %of vertical hyperplanes (planes containing the center) in $\H_n$
%VP stands for vertical planes and denotes the set 
of codimension–1 planes which are parallel to the z-axis. Note that the quantities  $\beta_{E}(x,r)$ are scale-invariant  since $\beta_{\delta_t(E)}(\delta_t(x),tr)=\beta_{E}(x,r)$ for any $t>0$. 
We record that an $L_\infty$ version of the codimension--1 $\beta$--numbers in the Heisenberg group was introduced in \cite{CFO1}, where a weaker qualitative 
(Heisenberg) analogue of \eqref{do}, known as the
weak geometric lemma, was obtained for intrinsic Lipschitz graphs in $\H_n$.  

In \cite{CLY1} we established the direct analogue of the strong geometric lemma with exponent $2$ for intrinsic Lipschitz graphs in $\H_n$ for $n \geq 2$.
\begin{thm}\label{thm:beta2 highdim}\cite{CLY1}
  Let $n\ge 2$ and let $\Gamma$ be an intrinsic $L$--Lipschitz graph in $\H_n$.  Then, for any $y \in \Gamma$ and any $R>0$,
  \begin{align}
  \int_0^R \int_{B(y,R) \cap \Gamma} \beta_{\Gamma}(x,r)^2 \ud \mathcal{H}^{2n+1}(x) \frac{\ud r}{r} \lesssim_{L} R^{2n+1}. \label{eq:betahighdim}
  \end{align}
\end{thm}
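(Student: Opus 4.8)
The plan is to reduce the Carleson bound \eqref{eq:betahighdim} to a Dorronsoro-type oscillation estimate for the intrinsic parametrizing function of $\Gamma$, and then to exploit the extra horizontal directions that are available when $n\ge 2$; I begin with the reduction. Fix a vertical subgroup $\mathbb{W}$ of $\H_n$ complementary to a horizontal line $\mathbb{V}$ and write $\Gamma=\{\Phi(w):w\in\mathbb{W}\}$ as the intrinsic graph of a function $\phi\colon\mathbb{W}\to\mathbb{V}$. The intrinsic $L$--Lipschitz condition then yields the bound $\|\nabla^{\phi}\phi\|_{L^\infty}\lesssim_L 1$ on the broad intrinsic gradient of $\phi$, together with the induced regularity ($\phi$ is Euclidean-Lipschitz along the $2n-1$ horizontal directions of $\mathbb{W}$ and $\tfrac12$--H\"older along the vertical one). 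Since $\mathcal{H}^{2n+1}|_{\Gamma}$ is comparable to the pushforward under $\Phi$ of Lebesgue measure $\mathcal{L}^{2n}$ on $\mathbb{W}$, since $\Phi$ distorts metric balls boundedly (with $L$-dependent constants), and since after a Whitney-type discretization \eqref{eq:betahighdim} is equivalent to a dyadic Carleson packing of $\beta_\Gamma^2$, it suffices to bound $\sum_{Q}\beta_\Gamma(\Phi(w_Q),\ell(Q))^2\,\ell(Q)^{2n+1}$, the sum over dyadic cubes $Q$ of $\mathbb{W}$ inside a fixed cube $Q_0$, by $\mathcal{L}^{2n}(Q_0)$. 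The subtlety to keep in mind is that the vertical plane best approximating $\Gamma$ near $Q$ at scale $\ell(Q)$ need not be close to $\mathbb{W}$: it may rotate as the scale decreases, so a single global parametrization does not by itself produce the desired oscillation estimate.

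\textbf{Step 2: corona decomposition.} To absorb this rotation I would run a stopping-time decomposition in the spirit of the foliated coronization of \cite{NY-Foliated}: for a fixed small $\varepsilon>0$, group consecutive scales on which $\Gamma$ stays $\varepsilon$--flat with respect to essentially a single vertical plane $\mathbb{W}_T$ that turns by at most a fixed amount, stopping a ``tree'' $T$ when $\Gamma$ ceases to be $\varepsilon$--flat or when $\mathbb{W}_T$ has turned too much. The stopping cubes are Carleson-packed using the weak geometric lemma of \cite{CFO1} (valid in every $\H_n$) together with the cone condition. On each tree $T$ I would reparametrize $\Gamma$ over $\mathbb{W}_T$ as the graph of a function $\phi_T$ which is now \emph{small}, $\|\phi_T\|_{L^\infty}\lesssim\varepsilon\,\ell(T)$ on the relevant region, with intrinsic gradient still bounded, so that on the cubes of $T$ the Heisenberg distance from $\Gamma$ to its best approximating vertical plane is comparable to the oscillation of $\phi_T$ from the functions that are affine in the horizontal variables of $\mathbb{W}_T$. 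Summing $\beta_\Gamma^2$ over the cubes of $T$ then telescopes, as in Dorronsoro's original argument, against an $L^2$ square function built from $\nabla^{\phi_T}\phi_T$, and everything reduces to the Dorronsoro-type bound
\[
\int_0^{\ell(T)}\!\!\int_{T}\bigl|\nabla^{\phi_T}\phi_T-(\nabla^{\phi_T}\phi_T)_{w,r}\bigr|^2\ud\mathcal{L}^{2n}(w)\,\frac{\ud r}{r}\ \lesssim_L\ \mathcal{L}^{2n}(T),
\]
where $(\,\cdot\,)_{w,r}$ denotes the average over the ball $B(w,r)$.

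\textbf{Step 3: the nonlinear Dorronsoro estimate, and why $n\ge2$.} This last bound is the heart of the matter and the step I expect to be the genuine obstacle, because $\nabla^{\phi}$ is \emph{nonlinear}: in suitable coordinates on $\mathbb{W}$ it has $2n-2$ components that are ordinary Euclidean partial derivatives of $\phi$ and one ``characteristic'' component of Burgers type, of the form $\partial_s\phi+\phi\,\partial_\tau\phi$. For the linear components the estimate is exactly the classical Euclidean Dorronsoro theorem \cite{dor}, applied on the foliation by the corresponding coordinate planes, along which $\phi_T$ is Euclidean-Lipschitz. For the nonlinear component I would use that $n\ge 2$ supplies $2n-2\ge 2$ \emph{transverse} linear horizontal directions along which $\phi_T$ is Lipschitz: slicing $\mathbb{W}_T$ by the $2$--planes carrying the Burgers pair $(s,\tau)$, one controls $\phi_T$ on each slice by its best ``Burgers-affine'' approximant using only the pointwise bound on that slice, and then uses the transverse Lipschitz bounds to glue the slice estimates and to linearize the term $\phi\,\partial_\tau\phi$, absorbing it into the Euclidean part at the cost of a constant. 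It is precisely here that $\H_1$ behaves differently: for $n=1$ there are no transverse directions, the estimate reduces to a genuinely nonlinear oscillation bound for the Burgers operator alone --- ``intrinsic gradient near-constant'' no longer forces ``$\phi$ near-affine'' except up to a square root --- and squaring converts the exponent $2$ into $4$, which is exactly why $\H_1$ requires the exponent $4$ rather than $2$, as established in the present paper.
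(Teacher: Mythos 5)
This theorem is imported from \cite{CLY1}; the present paper gives no proof of it, so your sketch can only be measured against the argument of \cite{CLY1}, whose engine is a quantitative $L_2$ control of vertical oscillations by horizontal ones available precisely when $n\ge 2$ (the vertical direction is a commutator of horizontal pairs \emph{other} than the pair consumed by the graph direction --- the same dichotomy behind the $L_2$ versus $L_4$ vertical-versus-horizontal perimeter inequalities). Your Steps 1--2 are routine reductions in that spirit, but Step 3, which you yourself identify as ``the heart of the matter,'' is asserted rather than proved, and as stated it would fail. The claim that on each $(s,\tau)$--slice one can control $\phi_T$ by its best ``Burgers--affine'' approximant ``using only the pointwise bound on that slice,'' with the transverse Lipschitz directions then serving to ``glue the slice estimates and linearize $\phi\,\partial_\tau\phi$, absorbing it into the Euclidean part at the cost of a constant,'' has no mechanism behind it. If square-summable ($p=2$) slicewise control were obtainable from the slice alone, the identical argument would apply to $\H_1$, where the graph is exactly one such family of slices, contradicting Theorem~\ref{thm:beta-theorem}. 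Conversely, mere Lipschitz regularity in the $2n-2$ transverse directions does not rule out the bad behaviour: a function can be Lipschitz in the extra horizontal variables and still carry, uniformly in those variables, the high-aspect-ratio bumps that make the exponent $4$ sharp in $\H_1$. So the transverse directions must enter through a genuinely quantitative estimate (a Dorronsoro/Poincar\'e-type inequality bounding the nonlinear, vertical part of the oscillation in $L_2$ by horizontal first differences, which exists only for $n\ge2$), and that estimate is exactly what is missing from your outline; without it the proof does not close.

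Two secondary points. In Step 2, the weak geometric lemma of \cite{CFO1} packs only the cubes where $\varepsilon$--flatness fails; the cubes where you stop because the approximating vertical plane ``has turned too much'' are not covered by it, and packing those is a separate (and in the Heisenberg setting delicate) matter --- in the foliated corona decompositions of \cite{NY-Foliated} the analogous control comes only with the weight $\alpha(Q)^{-4}$, not as a plain Carleson condition, so you cannot simply cite the weak geometric lemma for this. Finally, your closing heuristic for why $\H_1$ forces exponent $4$ (``near-constant intrinsic gradient forces near-affine only up to a square root, and squaring gives $4$'') is not the actual mechanism: as explained in the introduction here and in \cite{CLY2}, the exponent $4$ arises because a bump of aspect ratio $\alpha$ costs area comparable to $\alpha^{-4}$, which is what the weighted Carleson condition of Definition~\ref{def:weighted Carleson} encodes.
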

However, in \cite{CLY2} we showed that unlike the Euclidean case, the strong geometric lemma fails in $\H_1$ for all exponents $s \in [2,4)$.
\begin{thm}\cite{CLY2} \label{thm:beta-theorem}There exist a constant $L>0$, a radius $R>0$, and a sequence of $L$--intrinsic Lipschitz graphs $(\Gamma_n)_{n \in \N}$ in $\H_1$ such that $\zero \in \Gamma_n$ for all $n$ and
  \begin{equation}\label{eq:unbdbeta}\lim_{n \to \infty} \int_{0}^R\int_{B(0,R) \cap \Gamma_n}  \beta_{\Gamma_n}(x,r)^s \ud \mathcal{H}^{3}(x) \frac{\ud r}{r}=+\infty 
  \end{equation}
 for all $s \in [2,4)$.
\end{thm}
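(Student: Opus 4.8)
The plan is to construct the graphs in the statement explicitly: for each $N\in\N$ we build an intrinsic $L$-Lipschitz graph $\Gamma_{N}$ over a fixed vertical plane, with $\zero\in\Gamma_{N}$, coinciding with that plane outside $B(\zero,R)$ for an absolute radius $R$, and such that, setting $\epsilon_{N}=N^{-1/4}$,
\[
\beta_{\Gamma_{N}}(x,r)\gtrsim\epsilon_{N}
\]
for all $(x,r)$ in a subset of $\{\,x\in B(\zero,R)\cap\Gamma_{N},\ 2^{-N}\le r\le R\,\}$ of positive proportion with respect to $\ud\mathcal{H}^{3}(x)\,\tfrac{\ud r}{r}$. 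Since $\mathcal{H}^{3}(B(\zero,R)\cap\Gamma_{N})\sim R^{3}$ and $\int_{2^{-N}}^{R}\tfrac{\ud r}{r}\sim N$, for this $\Gamma_{N}$ the integral in \eqref{eq:unbdbeta} is then $\gtrsim R^{3}N\epsilon_{N}^{s}=R^{3}N^{1-s/4}$, which tends to $+\infty$ as $N\to\infty$ for every $s<4$; taking $(\Gamma_{n})_{n\in\N}$ to be this family gives the theorem. Everything thus reduces to producing, for each scale budget $N$, an intrinsic $L$-Lipschitz graph whose flatness defect is $\sim N^{-1/4}$ at every one of $N$ consecutive scales, on a fixed fraction of its mass; the rate $N^{-1/4}$ is sharp, and any amplitudes with $N\epsilon_{N}^{4}$ bounded would do as well.

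Such a graph is assembled scale by scale by a foliated construction in the spirit of \cite{NY-Foliated}. One first fixes a smooth unit-scale ``model bump'' $g$ on the parametrising plane, supported in a unit box, whose intrinsic graph is genuinely non-affine at unit scale — so that its $\beta$-number is bounded below by an absolute constant — while $\partial_{y}g-g\,\partial_{t}g$ is bounded; a bump built around a truncated $\sqrt{t}$-cusp, which is intrinsic Lipschitz, does the job. One then defines the parametrising function $\varphi_{N}$ recursively: given $\varphi^{(k-1)}$, carrying the layers at scales $1,\dots,2^{-(k-1)}$, the scale-$2^{-k}$ layer is added \emph{not} in the ambient coordinates but in the coordinates adapted to $\varphi^{(k-1)}$, i.e.\ transported along the characteristics of the Burgers-type field $\partial_{y}-\varphi^{(k-1)}\partial_{t}$; concretely one lays rescaled copies $\epsilon_{N}2^{-k}\,g$ of the model bump on a $2^{-k}$-net of the plane, sheared to follow those characteristics. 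The benefit of this ``intrinsic stacking'' is that the intrinsic gradient of $\varphi^{(k)}$ equals that of $\varphi^{(k-1)}$ plus an error genuinely localised at scale $2^{-k}$: the cross terms between the coarse amplitude and the fine vertical derivative — which in a naive superposition would blow the intrinsic Lipschitz constant up exponentially in $N$ — cancel, because the characteristic field annihilates functions constant along its flow. The shears have relative size $O(\epsilon_{N})$, too small to spoil non-affineness, so each layer still contributes $\beta_{\Gamma_{N}}(\cdot,2^{-k})\gtrsim\epsilon_{N}$ on a fixed proportion of $B(\zero,R)\cap\Gamma_{N}$.

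The crux — and the main obstacle — is to keep the intrinsic Lipschitz constant of the fully assembled $\varphi_{N}$ bounded by a fixed $L$ independent of $N$; this is where the exponent $4$, and the contrast with $\H_{n}$ for $n\ge2$, appears. The nonlinear term $\tfrac12\partial_{t}(\varphi_{N}^{2})$ of the intrinsic gradient \emph{a priori} couples all $N$ scales, and even after the characteristic-direction cancellations one must still show that the surviving scale-$2^{-k}$ increments combine over scales in an $\ell^{2}$ rather than an $\ell^{1}$ fashion. This is precisely the phenomenon, special to the first Heisenberg group, that the vertical perimeter $\vpf_{E_{N}}$ of the region $E_{N}$ bounded by $\Gamma_{N}$ is controlled by the horizontal perimeter only through the square function $\left(\sum_{k}\vpf_{E_{N}}(2^{-k})^{2}\right)^{1/2}$ — the sharp vertical-versus-horizontal isoperimetric inequality $\sum_{k}\vpf_{E_{N}}(2^{-k})^{2}\lesssim L$ per unit mass. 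Because the quadratic dilation of the $t$-axis makes a layer of flatness defect $\epsilon$ at scale $2^{-k}$ carry vertical perimeter $\sim\epsilon^{2}$ at that scale (the relation $\vpf\sim\beta^{2}$), this inequality reads $N\epsilon_{N}^{4}\lesssim L$, which pins the amplitude at $\epsilon_{N}\sim N^{-1/4}$ — attained by spreading the vertical energy equally over all $N$ scales. (The matching upper bound, the strong geometric lemma with exponent $4$ in $\H_{1}$, is the main theorem of the present paper; with \cite{CLY1} and \cite{CLY2} it determines the sharp exponent in every $\H_{n}$.)

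The argument therefore has two technical cores. The first is the uniform Lipschitz estimate just described: carrying out the Naor--Young-type extremal construction \emph{inside} the class of intrinsic Lipschitz graphs — the original sharpness examples live among general finite-perimeter sets — and making the shears and the layer placements mutually consistent across all $N$ scales so that the $\ell^{2}$-over-scales control survives; this is the heart of the matter. The second is the non-cancellation behind the $\beta$-lower bound: one must check that the vertical oscillation deliberately inserted at each scale is genuinely \emph{seen} by best-affine-plane approximation, so that $\beta_{\Gamma_{N}}(\cdot,2^{-k})\gtrsim\epsilon_{N}$ persists both under the shearing by the coarser field and in the presence of the finer layers. Once both are in place the conclusion is robust: each of the $N$ scales contributes an additive $\sim N^{-s/4}$ to the integral in \eqref{eq:unbdbeta}, and $N\cdot N^{-s/4}\to\infty$ precisely when $s<4$, so divergence holds simultaneously for the whole range $s\in[2,4)$.
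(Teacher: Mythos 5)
Your proposal is a strategy outline rather than a proof, and the two places where you yourself locate ``the heart of the matter'' are precisely where all of the work in \cite{CLY2} (and in the sharpness example of \cite{NY-Foliated} that it parallels) actually lies. Note first that the present paper does not prove Theorem~\ref{thm:beta-theorem}; it is quoted from \cite{CLY2}, whose construction is indeed of the type you describe: a multi-scale family of bumps, with the exponent $4$ coming from the fact that each bump costs area (equivalently, Lipschitz/perimeter budget) proportional to the inverse fourth power of its aspect ratio, so that with $N$ active scales the per-scale flatness defect is forced to be $\approx N^{-1/4}$, and $N\cdot N^{-s/4}\to\infty$ exactly for $s<4$. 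Your accounting of the final integral is correct and matches this. But the two claims you defer are not routine: (i) that the intrinsically stacked construction with amplitude $N^{-1/4}$ at each of $N$ scales has intrinsic Lipschitz constant bounded independently of $N$ --- the ``cancellation along characteristics'' you invoke must be proved, since the nonlinear term $\psi\partial_z\psi$ in the intrinsic gradient genuinely couples scales and a naive superposition fails; and (ii) that the oscillation inserted at scale $2^{-k}$ is still seen by the best approximating \emph{vertical plane} (so $\beta_{\Gamma_N}(\cdot,2^{-k})\gtrsim N^{-1/4}$ on a fixed proportion of $B(\zero,R)\cap\Gamma_N$) after the shearing by coarser layers and the addition of finer ones. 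Asserting that these ``do the job'' is a genuine gap: without them there is no proof, and verifying them is essentially the content of \cite{CLY2}.

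A secondary point: your motivating inequality is stated backwards. The control of the vertical perimeter by the horizontal perimeter through an $\ell_2$ sum over scales is the phenomenon valid in $\H_n$ for $n\ge 2$ (and is why the exponent there is $2$, Theorem~\ref{thm:beta2 highdim}); in $\H_1$ the $\ell_2$ version \emph{fails}, and only the $\ell_4$-over-scales control holds, which is exactly what permits amplitudes as large as $N^{-1/4}$. Your bookkeeping lands on the correct constraint $N\epsilon_N^4\lesssim 1$ only because the misstated exponent in the scale-sum and the claimed relation between vertical perimeter and $\beta$ compensate each other. This does not damage the final arithmetic, since the constraint is in any case something the construction must be shown to satisfy rather than something one can cite to guarantee its feasibility, but as written it misidentifies the $\H_1$-specific mechanism the theorem rests on.
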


In this paper we complete this line of research by showing that the strong geometric lemma holds in $\H_1$ with exponent $4$. Our main theorem is the following:
\begin{thm}\label{thm:beta4}
 Let \(\Gamma\) be an intrinsic \(L\)--Lipschitz graph in $\H_1$.  Then for any $y\in \Gamma$ and any $R>0$,
  \begin{equation}\label{eq:beta4-main}
    \int_{B(y,R) \cap \Gamma} \int_0^R \beta_{\Gamma}(x,r)^4\;\frac{\mathrm{d} r}{r}\ud \cH^3(x) \lesssim_L R^3.
  \end{equation}
\end{thm}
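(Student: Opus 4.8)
\emph{Strategy.} The proof proceeds in four stages: (i) reduce \eqref{eq:beta4-main} to a discrete Carleson packing over a system of dyadic cubes adapted to $\Gamma$; (ii) split $\beta_{\Gamma}$ into a ``horizontal'' part controlled by a Dorronsoro estimate with the classical exponent $2$ and a genuinely Heisenberg ``vertical'' part; (iii) dispatch the horizontal part; (iv) bound the vertical part by a refinement of the foliated coronization of \cite{NY-Foliated}. Stage (iv) is the heart of the matter, and it is there that the exponent $4$ --- which, by Theorem~\ref{thm:beta-theorem}, cannot be lowered --- is forced.

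For stages (i)--(iii): write $\Gamma$ as the intrinsic graph of a function $u$ over a vertical plane $W\cong\R^2_{(y,z)}$. Then $\cH^3$ on $\Gamma$ is comparable to the push-forward of Lebesgue measure on $W$ under the graph map; $u$ is Lipschitz in $y$ and $\tfrac12$--H\"older in $z$ with constants $\lesssim_L 1$ (equivalently, Lipschitz for the parabolic metric on $W$, in which $W$ is Ahlfors $3$--regular); and, by the cone condition, $\partial_z u$ is a signed Radon measure bounded from one side (an Oleinik-type entropy bound), its opposite part possibly singular along shocks of the Burgers field $\cB_u=\partial_y+u\,\partial_z$. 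Passing to a Christ--David system of cubes $\{Q\}$ adapted to $u$ and a cube $Q_0$ of side $\approx R$, a routine argument reduces \eqref{eq:beta4-main} to $\sum_{Q\subseteq Q_0}\beta_{\Gamma}(Q)^4\ell(Q)^3\lesssim_L\ell(Q_0)^3$, where $\beta_{\Gamma}(Q)$ is the cube form of \eqref{eq:def beta}. Since every plane in $\mathsf{VP}$ is locally the intrinsic graph over $W$ of an affine function of $y$ alone --- being parallel to the $z$--axis, it imposes no constraint in the $z$--direction --- and the Carnot--Carath\'eodory distance from a point of $\Gamma$ to such a plane is comparable to $|u(y,z)-\alpha-\gamma y|$, restricting the infimum in \eqref{eq:def beta} to this subclass of planes gives
\[
  \beta_{\Gamma}(Q)\lesssim \frac{1}{\ell(Q)}\,\inf_{\alpha,\gamma}\,\frac{1}{|Q|}\int_Q\bigl|u-\alpha-\gamma y\bigr|\lesssim \beta^{\mathrm{Dor}}_u(Q)+\beta^{\mathsf v}_u(Q),
\]
where $\beta^{\mathrm{Dor}}_u(Q)$ is the ($L^1$, adapted) Dorronsoro number of $u$ on $Q$ and $\beta^{\mathsf v}_u(Q)$ --- the cost of omitting the $z$--variable from the affine comparison function --- measures the residual $z$--oscillation of $u$ on $Q$ and is governed by $\partial_z u$. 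Stage (iii) is then immediate: since $u$ is parabolic--Lipschitz with constant $\lesssim_L 1$, the Dorronsoro estimate (see \cite{dor}; cf.\ \cite{CLY1}) gives $\sum_{Q\subseteq Q_0}\beta^{\mathrm{Dor}}_u(Q)^2\ell(Q)^3\lesssim_L\ell(Q_0)^3$, and combined with $\beta^{\mathrm{Dor}}_u(Q)\lesssim_L 1$ this yields $\sum_{Q\subseteq Q_0}\beta^{\mathrm{Dor}}_u(Q)^4\ell(Q)^3\lesssim_L\ell(Q_0)^3$; so it remains only to pack $\beta^{\mathsf v}_u$.

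Stage (iv) --- showing $\sum_{Q\subseteq Q_0}\beta^{\mathsf v}_u(Q)^4\ell(Q)^3\lesssim_L\ell(Q_0)^3$ --- is the crux. The one-sided bound on $\partial_z u$ alone is far too weak here (were it enough, a smaller exponent would work, contradicting Theorem~\ref{thm:beta-theorem}), so one must use the rigidity imposed by the entropy condition. I would do this through a \emph{refined foliated coronization}: a stopping-time decomposition of $\{Q\subseteq Q_0\}$ into trees $\cT$, with top cubes $Q_{\cT}$ obeying a Carleson packing $\sum_{\cT}\ell(Q_{\cT})^3\lesssim_L\ell(Q_0)^3$, such that on each $\cT$ the function $u$ is approximated --- with errors summing \`a la Dorronsoro, and hence harmlessly, since $u$ and the model share the same parabolic--Lipschitz bound --- by a \emph{foliated} model $v_{\cT}$ whose intrinsic graph is ruled by horizontal lines; after straightening the ruling by a $v_{\cT}$--adapted change of coordinates, $v_{\cT}$ is monotone in the vertical variable, so $\partial_z v_{\cT}$ has essentially one sign and finite total mass $\lesssim_L\ell(Q_{\cT})$. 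On a tree one then has, cube by cube, $|\partial_z v_{\cT}|(Q)\lesssim_L\ell(Q)^2$ (parabolic regularity of $v_{\cT}$), while summing over the cubes $Q\in\cT$ of any fixed side length gives $\lesssim_L\ell(Q_{\cT})^2$ (monotonicity and telescoping in the vertical variable); together these pack $\beta^{\mathsf v}_{v_{\cT}}$ over $\cT$. The functional spent along the stopping time is a weighted form of the number $\NM$ of monotonicity changes of $u$ in the vertical direction, which can only decrease down a tree; the \emph{refinement} relative to \cite{NY-Foliated} is that the corona must simultaneously pack the tops and endow each tree with a monotone model strong enough to dominate $\beta^{\mathsf v}_u$ cube by cube, not merely an averaged vertical-perimeter functional. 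The appearance of the exponent $4$, and its optimality, reflect the $L^4$ versus $L^2$ dichotomy between $\H_1$ and $\H_n$, $n\ge2$, already present in the vertical perimeter inequality of \cite{NY-Foliated} and matched by the sharpness examples of Theorem~\ref{thm:beta-theorem}. Absorbing the model errors as in stage (iii) and summing over trees via the packing of the $Q_{\cT}$ completes (iv); combining (iii) and (iv) with (ii) proves \eqref{eq:beta4-main}.

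The main obstacle is stage (iv): building the refined foliated coronization --- tracking $\NM$ finely enough to get the Carleson packing of the tree tops while simultaneously producing, on each tree, a monotone foliated model that controls $\beta^{\mathsf v}_u$ pointwise on cubes --- and then carrying out the per-tree summation in which the sharp $\H_1$ exponent $4$ is produced.
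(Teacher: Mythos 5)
Your stages (i)--(ii) are broadly in the spirit of the paper's reduction (the comparison of $\beta_\Gamma$ with a parametric quantity measuring the $L_1$/$L_4$ distance from the graph function to functions in $\Aff$ is Lemma~\ref{lem:parametric-beta} and \eqref{eq:def-gamma}), but stages (iii)--(iv) contain genuine gaps. For (iii): Dorronsoro's theorem \cite{dor} concerns Euclidean Lipschitz functions, whereas your $u$ is only Lipschitz--in--$y$/H\"older--in--$z$ for the parabolic metric; an exponent--$2$ Carleson packing for approximation of such $u$ by functions affine in \emph{both} variables is an unproved, nontrivial claim (the closest known results are the higher-dimensional Theorem~\ref{thm:beta2 highdim} and the F\"assler--Orponen extension mentioned in the introduction, neither of which gives what you use), and the paper's argument is structured precisely so that no such statement is ever needed.

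The deeper problem is stage (iv), which you yourself flag as the open crux, and as sketched it would prove too much. Your per-tree computation for the monotone model $v_\cT$ (cube-by-cube mass $|\partial_z v_\cT|(Q)\lesssim\ell(Q)^2$ plus scale-by-scale mass $\lesssim\ell(Q_\cT)^2$) already packs $\beta^{\mathsf v}_{v_\cT}$ with exponent $2$, and you additionally assert that the model errors ``sum \`a la Dorronsoro, and hence harmlessly'' and that the tops satisfy an \emph{unweighted} packing $\sum_\cT\ell(Q_\cT)^3\lesssim\ell(Q_0)^3$. If all three assertions held, the same argument would give the strong geometric lemma with exponent $2$ in $\H_1$, contradicting Theorem~\ref{thm:beta-theorem}; so the exponent $4$ cannot be ``produced in the per-tree summation,'' and your sketch contains no mechanism forcing it. In the paper that mechanism is explicit: the foliated corona decomposition carries the \emph{weighted} Carleson condition of Definition~\ref{def:weighted Carleson}, in which vertical cuts are counted with weight $\alpha(Q)^{-4}|Q|$, and the paramonotone stopping rule together with the kinematic formula gives Corollary~\ref{cor:vitali weights}, which is what makes the approximating planes good in $L_p$ for all $p<5$ (Proposition~\ref{prop:Lp approximating planes}); the fourth power of $\beta$ is then matched scale by scale against the $\alpha^{-4}$ weights when summing $W(F_i)\le W(\cV(\Delta))\lesssim|Q|$ in the proof of Proposition~\ref{prop:Q-carleson}. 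Without an analogue of this quantitative link between the non-monotonicity spent at vertical cuts and the inverse fourth power of the aspect ratio, your stage (iv) is not a proof outline but a restatement of the theorem.
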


We prove Theorem~\ref{thm:beta4} by using the foliated corona decompositions constructed in \cite{NY-Foliated}. These decompose an intrinsic Lipschitz graph into rectangular regions called pseudoquads, whose aspect ratio depends on the shape of the corresponding intrinsic Lipschitz graph. At points and scales where the graph is flat, the pseudoquads are short and wide (large aspect ratio), and at points where the graph is bumpy, the pseudoquads are tall and skinny (small aspect ratio). 

The decomposition satisfies a weighted Carleson condition, which bounds the total size of the pseudoquads in the decomposition, weighted by the inverse fourth power of the aspect ratio (see Definition~\ref{def:weighted Carleson}). This parallels the construction of bumpy surfaces in \cite{NY-Foliated} and \cite{CLY2}, where Theorem~\ref{thm:beta-theorem} was proved by constructing surfaces with bumps with large aspect ratio. Each bump increases the area of the surface in proportion to the inverse fourth power of its aspect ratio, which leads to the exponent $4$ in Theorem~\ref{thm:beta-theorem}. The proof of Theorem~\ref{thm:beta4} relates the geometry of a foliated corona decomposition to the $\beta$--numbers of the associated graph and uses the weighted Carleson condition to prove the inequality \eqref{eq:beta4-main}.

One can generalize some of these questions by defining analogues of $\beta$--numbers for other subsets and other Carnot groups. For instance, quantitative rectifiability of $1$--dimensional subsets of Carnot groups has been studied in \cite{Li-strat, CLZ}, where relative beta numbers where introduced in order to study the travelling salesman problem and its connections to singular integrals. Little is known, however, about quantitative rectifiability for higher-dimensional subsets, even surfaces in $\H_n$ with topological dimension between $2$ and $2n-1$.

Similarly, though one can define intrinsic Lipschitz graphs in arbitrary Carnot groups (see \cite[Section 4.5]{SCnotes}), and these objects play an important role in rectifiability, the quantitative rectifiability of these graphs has not been systematically studied. For instance, given $p\in \N$, is there a Carnot group $G$ such that intrinsic Lipschitz graphs in $G$ satisfy an analogue of \eqref{eq:beta4-main} only for exponents greater than or equal to $p$? 

\subsection{Outline of paper}
In Section~\ref{sec:prelim}, we give some basic definitions and theorems that we will use in the rest of the paper. We also recap the results of \cite{NY-Foliated} on foliated corona decompositions. In Section~\ref{sec:initial}, we prepare to prove Theorem~\ref{thm:beta4} by reducing \eqref{eq:beta4-main} to an inequality involving the intrinsic Lipschitz function $\psi$ that parametrizes $\Gamma$.

In order to use foliated corona decompositions to prove Theorem~\ref{thm:beta4}, we must improve some of the bounds from \cite{NY-Foliated}. To achieve this, we first (in Section~\ref{sec:coronizations}) introduce foliated coronizations and paramonotone-stopped foliated coronizations (PSFCs), which are decompositions with some convenient additional bounds. Then, in Section~\ref{sec:lp-approx}, we prove a bound on the $L_4$ distance between a pseudoquad in an intrinsic Lipschitz graph and a plane. This improves the corresponding $L_1$ bound in \cite{NY-Foliated}. 
Finally, in Section~\ref{sec:proof}, we prove Theorem~\ref{thm:beta4}.

\section{Preliminaries}\label{sec:prelim}

\subsection{The Heisenberg group} The Heisenberg group $\HH:=\H_1$ is the nonabelian Lie group whose elements are points in $\R^3$ and whose group operation is given by
\begin{align}\label{eq:heis-mult}
  (x,y,z) (x',y',z') = \left(x+x',y+y',z+z' + \frac{xy' - x'y}{2} \right).
\end{align}
The identity element is $\zero:=(0,0,0)$ and the inverse of $v=(x,y,z)$ is the element $v^{-1}=(-x,-y,-z)$. 
Let $X=(1,0,0),Y=(0,1,0),Z=(0,0,1)\in \HH$ and let $x,y,z \from \HH\to \R$ be the coordinate functions. Given any $p\in \HH$, $p\ne 0$, we will denote by $\langle p\rangle$  the one-parameter subgroup containing $p$; in these coordinates, $\langle p \rangle$ is the subspace spanned by $p$. This lets us write $w^t = t w$ for $w \in \HH$ and $t \in \R$; when $ t\in \Z$, this agrees with the usual notion of exponentiation.

The center of the group is $\langle Z \rangle = \{(0,0,z) \mid z \in \R\}$.  An element $p \in \HH$  such that  $z(p) = 0$ is called a {\it horizontal vector}, and we denote by $A$ the set of horizontal vectors. Let $\pi\from \HH\to \R^2$ be the projection $\pi(x,y,z) = (x,y)$.

The \emph{Kor\'anyi metric} on $\HH$ is the left-invariant metric defined by
$$d_{\Kor}(p,p'):=\|p^{-1} p'\|_{\Kor},$$
where
$$\|(x,y,z)\|_{\Kor}:=\sqrt[4]{(x^2+y^2)^2 + 16 z^2}.$$
%We that $\|a X + b Y\|_{\Kor} = \sqrt{a^2 + b^2}$, so the Korányi length of a horizontal line segment is equal to the Euclidean length of its projection to the $xy$--plane.
We note that the family of automorphisms:
$s_t \from  \HH \to \HH, t \in \R,$ 
\begin{align*}
 s_t(x,y,z) = (t x, t y, t^2 z),
\end{align*}
dilate the Kor\'anyi metric metric; for $t \geq 0$ and $p, q \in \HH$,
\begin{align*}
d_{\Kor}(s_t (p), s_t(q))= t d_{\Kor} (p,q).
\end{align*}

Let $I \subset \R$ be an open interval. The map $\gamma\from I \to \HH$ is a {\it horizontal curve} if $x \circ \gamma, y \circ \gamma, z \circ \gamma \from I \to \R$ are Lipschitz (and thus $\gamma'$ is defined almost everywhere on $I$) and $$\frac{\ud}{ \ud s} \left[\gamma (t)^{-1} \gamma(s)\right]\big|_{s=t} \in A,$$
for almost every $t \in I$.  Cosets of the form $L=v \langle aX+bY \rangle$  where $(a,b) \in \R^2 \setminus \{(0,0)\}$ and $v \in \HH$, are called {\it horizontal lines}. The \emph{slope} of a horizontal line $L$ is the slope of its projection $\pi(L)$, i.e., if $L=v \langle aX+bY \rangle$, then $\slope L = \frac{b}{a}$ when $a\ne 0$ and $\slope L = \infty$ when $a=0$. 

A plane parallel to the $z$--axis is called a \emph{vertical plane}. Note that if $V$ is a vertical plane, the projection $\pi(V)$ is a line and we define the slope of $V$ as $\slope(V) := \slope(\pi(V))$. We will frequently use the vertical plane $V_0 := \{y = 0\}$. 

\subsection{Intrinsic Lipschitz graphs and characteristic curves}

Intrinsic graphs and intrinsic Lipschitz graphs are classes of surfaces in $\HH$ that play an important role in the study of rectifiability. Similarly to \cite{CLY2} we will define intrinsic graphs in terms of functions from $\HH$ to $\R$ that are constant along cosets of $\paramY$. In particular, if $\phi\from \HH \to \R$ is constant on cosets of $\paramY$, the \emph{intrinsic graph} of $\phi$ is the set
$$\Gamma_\phi=\{vY^{\phi(v)}\mid v\in V_0\}=\{p\in \HH \mid \phi(p)=y(p)\}.$$
(Many authors call these \emph{entire intrinsic graphs} and use ``intrinsic graph'' to refer to closed subsets of $\Gamma_\phi$.)

Note that left-translations and dilations of intrinsic graphs are also intrinsic graphs.  We parametrize $\Gamma_\phi$ by the map $\Psi_\phi\from V_0 \to \Gamma_\phi$,
$$\Psi_\phi(p)=pY^{\phi(p)-y(p)}.$$ 
This map projects $V_0$ to $\Gamma_\phi$ along cosets of $\paramY$, and if $\phi$ is continuous, it is a homeomorphism from $V_0$ to $\Gamma_\phi$.

Conversely, we let $\Pi \from \HH \to V_0$ be the (nonlinear) projection along cosets of $\paramY$ given by $\Pi(v)=v Y^{-y(v)}, v \in \HH$. Equivalently,
$$\Pi(x,y,z)=\left(x,0,z-\frac{1}{2} xy\right).$$
Although $\Pi$ is not a homomorphism, it commutes with the dilations $s_t$:  
$$\Pi(s_t(v)) =s_t(\Pi(v))$$
for all $v\in \HH$ and $t\in \R$.

For $0<L<1$, the open double cone is defined as
$$\Cone_L = \{p\in \HH \mid d_\Kor(\zero, p) < L^{-1} |y(p)|\}.$$
This is a dilation-invariant set, and when $L$ is close to $1$, it is a small neighborhood of $\langle Y\rangle \setminus \{\zero\}$. An \emph{$L$--intrinsic Lipschitz graph} is an intrinsic graph $\Gamma_\phi$ such that $p\Cone_L\cap \Gamma_\phi=\emptyset$ for all $p\in \Gamma_\phi$. Equivalently, $\Gamma_\phi$ is $L$--intrinsic Lipschitz if and only if $\Lip(y|_{\Gamma_\phi})\le L$. We say that a function $\phi\from \HH \to \R$ is is an \emph{$L$--intrinsic Lipschitz function} if it is constant on cosets of $\paramY$ and $\Gamma_\phi$ is an $L$--intrinsic Lipschitz graph. 
(Again, some authors refer to $\Gamma_\phi$ as an \emph{entire intrinsic Lipschitz graph}. By Theorem~27 of \cite{NY1}, any subset of $\HH$ that satisfies the cone condition above can be extended to an entire intrinsic Lipschitz graph with the same intrinsic Lipschitz constant.)

If $A \subset V_0$ is a Borel set and $f\from A \to \R$ is a Borel function, $|A|$ will denote the Lebesgue measure of $A$ and $\int_A f(x) \ud x$ will denote the integral with respect to the Lebesgue measure. We will denote by $\mathcal{H}^3$ the $3$--dimensional Hausdorff measure on $\HH$ taken with respect to the Kor\'anyi metric. The following lemma is well known, see e.g. \cite{FS16} and \cite[Lemma 2.8]{CLY2}.
\begin{lemma} \label{lem:measure-equiv}
    Let $0 < L < 1$ and $\Gamma$ be a $L$--intrinsic Lipschitz graph and let $A \subset \Gamma$ be a measurable subset. Then $|\Pi(A)| \approx_L \cH^3(A).$
\end{lemma}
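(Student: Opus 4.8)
The plan is to recognise the set function \(A\mapsto|\Pi(A)|\) on \(\Gamma\) as a measure and to reduce the claim to a single ball estimate that can be extracted from the cone condition. Write \(\Gamma=\Gamma_\phi\) for an \(L\)--intrinsic Lipschitz function \(\phi\). As noted in the excerpt, \(\Pi|_\Gamma\) is a homeomorphism onto \(V_0\) with inverse \(\Psi_\phi\), so \(A\mapsto|\Pi(A)|\) is precisely the push--forward measure \(\nu:=(\Psi_\phi)_\#(\mathrm{Leb}_{V_0})\) on \(\Gamma\), and the lemma says \(\nu(A)\approx_L\cH^3(A)\) for every Borel \(A\subset\Gamma\). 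I would prove this by first establishing
\begin{equation}\label{eq:plan-ball}
  |\Pi(\Gamma\cap B(p,r))|\approx_L r^3\qquad\text{for all }p\in\Gamma,\ r>0,
\end{equation}
and then passing to general \(A\) by two routine covering arguments. The bound ``\(\lesssim_L\)'' in \eqref{eq:plan-ball}, fed into the definition of \(\cH^3\) (cover \(A\) by sets \(U_i\) of small diameter with \(\sum_i(\operatorname{diam}U_i)^3\lesssim\cH^3(A)+\varepsilon\), choose \(p_i\in U_i\cap A\subset\Gamma\), note \(A\subset\bigcup_i\Gamma\cap B(p_i,\operatorname{diam}U_i)\), and sum), yields \(\nu(A)\lesssim_L\cH^3(A)\). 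The bound ``\(\gtrsim_L\)'' in \eqref{eq:plan-ball}, combined with the \(5r\)--covering lemma and the injectivity of \(\Pi|_\Gamma\) (for compact \(K\subset\Gamma\), extract a disjoint subfamily \(\{B(p_j,r_j)\}\) with \(p_j\in K\), \(r_j<\delta\), whose \(5\)--dilates cover \(K\), so that \(\cH^3_{10\delta}(K)\lesssim\sum_j r_j^3\lesssim_L\sum_j|\Pi(\Gamma\cap B(p_j,r_j))|\le|\Pi(\Gamma\cap K_\delta)|\) with \(K_\delta\) the \(\delta\)--neighbourhood of \(K\), then let \(\delta\to0\)), yields \(\cH^3(K)\lesssim_L\nu(K)\), hence \(\cH^3(A)\lesssim_L\nu(A)\) for all Borel \(A\) by inner regularity of \(\cH^3\) on \(\Gamma\).

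To prove \eqref{eq:plan-ball} I would normalise. Dilations are harmless, since \(\Pi\circ s_t=s_t\circ\Pi\) and \(s_t\) multiplies \(\mathrm{Leb}_{V_0}\) by \(t^3\), so it is enough to take \(r=1\). For the base point I would use that for \(g\in\HH\) the set \(g\Gamma_\phi\) is again an \(L\)--intrinsic Lipschitz graph \(\Gamma_{\phi'}\), and that the induced self--map \(\lambda_g:=\Pi\circ(g\,\cdot\,)\circ\Psi_\phi\) of \(V_0\) satisfies \(\Pi(gA)=\lambda_g(\Pi(A))\) for \(A\subset\Gamma\); writing \(g\) as a word in the one--parameter subgroups \(\langle X\rangle\), \(\paramY\), \(\langle Z\rangle\) and computing \(\Pi(h\Psi_\psi(w))\) for \(h\) in each of them, one finds the induced maps of \(V_0\cong\R^2\) are the translations \((x,z)\mapsto(x+t,z)\), \((x,z)\mapsto(x,z+t)\) and the shear \((x,z)\mapsto(x,z-tx)\) --- all affine of unit Jacobian and independent of \(\psi\) --- so \(\lambda_g\) preserves \(\mathrm{Leb}_{V_0}\) and \(|\Pi(gA)|=|\Pi(A)|\). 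Taking \(g=p^{-1}\), we may thus assume \(p=\zero\in\Gamma\), i.e.\ \(\phi(\zero)=0\); then \(\Psi_\phi(\zero)=\zero\) and \(\Psi_\phi(w)=\big(x(w),\,\phi(w),\,z(w)+\tfrac12 x(w)\phi(w)\big)\) for \(w\in V_0\).

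Under this normalisation \eqref{eq:plan-ball} becomes an estimate for \(|S|\), where \(S:=\Pi(\Gamma\cap B(\zero,1))=\{w\in V_0:\|\Psi_\phi(w)\|_\Kor<1\}\). For the upper bound I would use \(\Lip(y|_\Gamma)\le L\), which gives \(|\phi(w)|\le L\|\Psi_\phi(w)\|_\Kor<L\) on \(S\); plugging this into the formula for \(\Psi_\phi(w)\) and using \(\|\Psi_\phi(w)\|_\Kor<1\) forces \(|x(w)|<1\) and \(|z(w)|<1\), so \(|S|\) is bounded by an absolute constant. For the lower bound the crucial input is the one--sided, basepoint--anchored bi--Lipschitz estimate
\begin{equation}\label{eq:plan-bilip}
  \|\Psi_\phi(w)\|_\Kor\le C(L)\,d_\Kor(\zero,w)=C(L)\big(x(w)^4+16\,z(w)^2\big)^{1/4}\qquad(w\in V_0);
\end{equation}
granting it, \(\{w\in V_0:d_\Kor(\zero,w)<C(L)^{-1}\}\subset S\), and this Korányi ball in \(V_0\) has Lebesgue measure \(\gtrsim_L1\).

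I expect \eqref{eq:plan-bilip} to be the main obstacle, and the only place where the cone condition is genuinely used. To prove it, set \(a=|x(w)|\), \(b=|\phi(w)|\), \(c=|z(w)|\), \(\rho=\|\Psi_\phi(w)\|_\Kor\); expanding the Korányi norm of \(\Psi_\phi(w)\) and using \((u+v)^2\le2u^2+2v^2\) gives \(\rho^4\le a^4+10a^2b^2+b^4+32c^2\), while \(\Lip(y|_\Gamma)\le L\) gives \(b\le L\rho\), so \(b^4\le L^4\rho^4\) and \(10a^2b^2\le10L^2a^2\rho^2\); substituting these back and absorbing the cross term with Young's inequality yields \(\rho^4\lesssim_L a^4+c^2\), which is \eqref{eq:plan-bilip}. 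The point to keep in mind is that \eqref{eq:plan-bilip} really does require the anchoring \(\phi(\zero)=0\): the map \(\Psi_\phi\) is far from globally bi--Lipschitz --- already for a flat graph \(\{y=s\}\) with \(|s|\) large it distorts horizontal distances like a square root at small scales --- which is exactly why the left--translation invariance of \(|\Pi(\,\cdot\,)|\) established above, rather than a one--line direct estimate, is needed.
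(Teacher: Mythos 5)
Your argument is correct. Note that the paper itself does not prove this lemma --- it records it as well known and cites \cite{FS16} and \cite[Lemma 2.8]{CLY2} --- so there is no in-paper proof to compare against; what you have written is a valid self-contained proof along the standard lines underlying those references. The two pillars check out: (i) the invariance $|\Pi(gA)|=|\Pi(A)|$, since $\Pi(gw)=g\,\Pi(w)\,Y^{-y(g)}$ depends only on $\Pi(w)$ and the induced maps of $V_0$ generated by $X^t,Y^t,Z^t$ are a translation, a translation, and the unit-Jacobian shear $(x,z)\mapsto(x,z-tx)$; and (ii) the anchored estimate $\|\Psi_\phi(w)\|_{\Kor}\lesssim_L\|w\|_{\Kor}$, where your expansion $\rho^4\le a^4+10a^2b^2+b^4+32c^2$ together with $b\le L\rho$ (from $\Lip(y|_\Gamma)\le L$ and $\zero\in\Gamma$) and absorption indeed gives $\rho^4\lesssim_L a^4+c^2$, using $L<1$. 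Combined with the trivial upper bound $|S|\lesssim 1$, this yields $|\Pi(\Gamma\cap B(p,r))|\approx_L r^3$, and the two covering arguments (Hausdorff covers for $|\Pi(A)|\lesssim_L\cH^3(A)$, the $5r$-covering lemma plus injectivity of $\Pi|_\Gamma$ for the reverse) are sound; the only detail worth making explicit is that the Vitali step also shows $\cH^3$ is finite on compact subsets of $\Gamma$, which is what licenses the inner-regularity step used to pass from compact $K$ to general measurable $A$.
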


The following lemma will be used frequently in our proofs.
\begin{lemma}[{\cite[Lemma 2.3]{NY-Foliated}}] \label{lem:intrinsic-lipschitz-metric}
  Let \(0 < L < 1\) and let $\Gamma = \Gamma_\psi$ be an  $L$--intrinsic Lipschitz graph of a function $\psi\from U \to \R$.  Then for all  $v,w \in U$,
  \begin{align}
    |\psi(v) - \psi(w)| \leq \frac{2}{1-L} d_{\Kor}(\Psi_\psi(v),w\langle Y\rangle). \label{eq:lipschitz-metric}
  \end{align}
  In particular, for any $v\in U$ and any $s\in \R$ such that $v Z^s \in U$, 
  \begin{equation}\label{eq:lipschitz-vertical}
    |\psi(v) - \psi(v Z^s)| \le \frac{4}{1-L}\sqrt{|s|}.
  \end{equation}
\end{lemma}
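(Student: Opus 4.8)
The plan is to derive the whole statement from the single structural hypothesis $\Lip(y|_\Gamma)\le L$ together with two elementary features of the Kor\'anyi metric. Write $a=\Psi_\psi(v)$ and $b=\Psi_\psi(w)$; both lie on $\Gamma$, and from the definition of $\Psi_\psi$ one has $y(a)=\psi(v)$, $y(b)=\psi(w)$, while $b=wY^{\psi(w)-y(w)}$ lies on the coset $w\langle Y\rangle$. The cone condition in the form $\Lip(y|_\Gamma)\le L$ immediately gives
\[
  |\psi(v)-\psi(w)| = |y(a)-y(b)| \le L\, d_\Kor(a,b).
\]
The difficulty is that the right-hand side involves $d_\Kor(a,b)$, not the quantity $\rho:=d_\Kor\bigl(a,\,w\langle Y\rangle\bigr)$ from the statement; since $b\in w\langle Y\rangle$ we only get the useless bound $\rho\le d_\Kor(a,b)$. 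So the heart of the proof is to reverse this: bound $d_\Kor(a,b)$ by $\rho$ up to a term that can be absorbed.

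For this I would use two facts: (i) $|y(p)|\le \|p\|_\Kor$ for every $p\in\HH$, which is immediate from $\|(x,y,z)\|_\Kor^4=(x^2+y^2)^2+16z^2\ge y^4$; and (ii) any two points on a common coset of $\langle Y\rangle$ are at Kor\'anyi distance equal to the absolute difference of their $y$-coordinates (by left-invariance $d_\Kor(wY^{t_1},wY^{t_2})=\|Y^{t_2-t_1}\|_\Kor=|t_2-t_1|$, while $y(wY^{t_i})=y(w)+t_i$). Let $c\in w\langle Y\rangle$ realize the distance $\rho$ from $a$ (such a nearest point exists because $t\mapsto d_\Kor(a,wY^t)$ is continuous and proper; alternatively one takes an $\varepsilon$-almost minimizer and lets $\varepsilon\to 0$). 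Applying (i) to $a^{-1}c$ gives $|\psi(v)-y(c)|=|y(a)-y(c)|\le d_\Kor(a,c)=\rho$; applying (ii) to $b,c$ gives $d_\Kor(b,c)=|\psi(w)-y(c)|\le |\psi(v)-\psi(w)|+|\psi(v)-y(c)|\le |\psi(v)-\psi(w)|+\rho$. The triangle inequality then yields
\[
  d_\Kor(a,b)\le d_\Kor(a,c)+d_\Kor(c,b)\le 2\rho + |\psi(v)-\psi(w)|.
\]
Substituting into the Lipschitz bound and using $L<1$ to move the $|\psi(v)-\psi(w)|$ term to the left gives $(1-L)|\psi(v)-\psi(w)|\le 2L\rho$, hence $|\psi(v)-\psi(w)|\le \tfrac{2L}{1-L}\rho\le \tfrac{2}{1-L}\rho$, which is \eqref{eq:lipschitz-metric}.

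For the vertical estimate \eqref{eq:lipschitz-vertical} I would apply \eqref{eq:lipschitz-metric} with $w=vZ^s$ and then bound $d_\Kor\bigl(\Psi_\psi(v),\,vZ^s\langle Y\rangle\bigr)$ from above by exhibiting one convenient point on that coset. The point $vZ^sY^{\psi(v)-y(v)}$ lies on $vZ^s\langle Y\rangle$, and since $Z^s$ is central, $\Psi_\psi(v)^{-1}\,vZ^sY^{\psi(v)-y(v)} = Y^{-(\psi(v)-y(v))}Z^{s}Y^{\psi(v)-y(v)} = Z^{s}$. Hence the distance is at most $\|Z^s\|_\Kor = \sqrt[4]{16 s^2} = 2\sqrt{|s|}$, and \eqref{eq:lipschitz-metric} gives $|\psi(v)-\psi(vZ^s)|\le \tfrac{2}{1-L}\cdot 2\sqrt{|s|} = \tfrac{4}{1-L}\sqrt{|s|}$.

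I do not anticipate a real obstacle: the only genuine choice is the auxiliary point $c$ (the foot of the shortest segment from $a$ to $w\langle Y\rangle$), and the only subtlety is the bookkeeping that allows the $|\psi(v)-\psi(w)|$ term to be absorbed — which is precisely where $L<1$ enters, and is the Heisenberg analogue of the standard Euclidean fact that a cone condition with opening bounded away from vertical forces a Lipschitz graph. Facts (i) and (ii) are one-line computations from the explicit formula for $\|\cdot\|_\Kor$, and the existence of $c$ is a routine properness remark that can be bypassed with almost-minimizers. (The argument in fact gives the slightly sharper constant $\tfrac{2L}{1-L}$ in \eqref{eq:lipschitz-metric}, but $\tfrac{2}{1-L}$ suffices for all later uses.)
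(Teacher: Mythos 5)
Your argument is correct: the identities $y(\Psi_\psi(v))=\psi(v)$, $y(\Psi_\psi(w))=\psi(w)$, the bound $|y(p)|\le\|p\|_{\Kor}$, the exact distance formula along a coset of $\langle Y\rangle$, and the absorption step using $L<1$ are all valid, and the vertical estimate \eqref{eq:lipschitz-vertical} follows exactly as you say from $\|Z^s\|_{\Kor}=2\sqrt{|s|}$ and centrality of $Z$. Note that this paper does not prove the lemma --- it is quoted from \cite[Lemma 2.3]{NY-Foliated} --- but your proof is essentially the standard one given there: restate the cone condition as $\Lip(y|_\Gamma)\le L$, run the triangle inequality through a (near-)closest point of $w\langle Y\rangle$, and absorb the $|\psi(v)-\psi(w)|$ term; your version even yields the slightly sharper constant $\tfrac{2L}{1-L}$.
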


Given a function $\psi \from \HH \to \R$ which is constant on cosets of $\paramY$ and a smooth function $f \from V_0 \to \R$, we define
\begin{equation}
\label{eq:horizder}
\partial_\psi f=\frac{\partial f}{\partial x}-\psi\frac{\partial f}{\partial z}.
\end{equation}
The differential operator $\partial_\psi$ defines a continuous vector field on $V_0$ whose $x$--coordinate is $1$, so the Peano existence theorem implies that there is at least one flow line of $\partial_\psi$ through every point of $V_0$. Such a flow line can be parametrized by $t\mapsto (t,0,g(t))$, where $g\from I \to \R$ (where $I$ is an interval) satisfies
\begin{align}
  g'(t) + \psi(t,0,g(t)) = 0,\quad \mbox{ for all }t\in I. \label{eq:cc-pde}
\end{align}
These flow lines are called  \textit{characteristic curves of $\Gamma_\psi$}, and we say that $g$ has a \emph{characteristic graph}.  %Although the flow lines obtained by Peano's theorem are only locally defined, in the case when $\psi$ is intrinsic Lipschitz they can be extended across $\R$, see \cite[Section 2.3]{NY-Foliated}. 
When $\psi$ is intrinsic Lipschitz, the characteristic curves of $\Gamma_\psi$ are exactly the $\Pi$--projections of horizontal curves $\gamma \from I \to \Gamma_\psi$ which satisfy $x(\gamma(t)) = t$ for all $t \in I$, see \cite[Lemma 2.6]{NY-Foliated}. Moreover, if $\psi$ is smooth then the characteristic curves of $\Gamma_\psi$ foliate $V_0$. However, this is not the case if $\psi$ is not smooth as characteristic curves could branch and rejoin, see e.g.\ \cite{BCSC15}.

The following lemma provides bounds on characteristic curves for intrinsic Lipschitz graphs.
\begin{lemma}[{\cite[Lemma 2.7]{NY-Foliated}}] \label{lem:intrinsic-lipschitz-cc}
Let $0 < L < 1$ and $\Gamma$ be an $L$--intrinsic Lipschitz graph. If $\gamma \from I \to V_0$  is a characteristic curve of \(\Gamma\) parameterized such that $x(\gamma(t )) = t$, then letting $g(t)=z(\gamma(t))$ we have:
  \begin{align}
    |g(t) - g(s) - g'(s) \cdot (t-s)| \leq \frac{L}{\sqrt{1-L^2}} \frac{(t-s)^2}{2}, \qquad \forall s,t \in I. \label{eq:g-linearize}
  \end{align}
\end{lemma}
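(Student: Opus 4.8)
The plan is to reduce \eqref{eq:g-linearize} to a Lipschitz estimate for the function $h$ defined along the characteristic curve by $h(t) := \psi(t,0,g(t))$, and then integrate. Write $\Gamma = \Gamma_\psi$. The characteristic equation then reads $g' = -h$ on $I$, and $h = -g'$ is continuous (since $\psi$ is continuous). As $g\in C^1$, we have $g(t)-g(s)-g'(s)(t-s) = -\int_s^t\bigl(h(\tau)-h(s)\bigr)\,\ud\tau$, so it suffices to prove that $h$ is $\tfrac{L}{\sqrt{1-L^2}}$--Lipschitz on $I$; this bound integrates at once to \eqref{eq:g-linearize}. To access the hypotheses, I lift $\gamma$ to the graph: set $\widetilde\gamma := \Psi_\psi\circ\gamma$, so $\widetilde\gamma(t) = \gamma(t)Y^{h(t)} = \bigl(t,\,h(t),\,g(t)+\tfrac{t\,h(t)}{2}\bigr)\in\Gamma$. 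A computation with the group law \eqref{eq:heis-mult}, simplified using $g(\sigma')-g(\sigma) = -\int_\sigma^{\sigma'}h$, gives for all $\sigma,\sigma'\in I$
\[
  \widetilde\gamma(\sigma)^{-1}\widetilde\gamma(\sigma') \;=\; \left(\sigma'-\sigma,\ h(\sigma')-h(\sigma),\ \int_\sigma^{\sigma'}\!\bigl(\tfrac{h(\sigma)+h(\sigma')}{2}-h(\tau)\bigr)\,\ud\tau\right).
\]
The structural feature to exploit is that the central coordinate of this product is bounded by $|\sigma'-\sigma|$ times the \emph{oscillation} of $h$ on $[\sigma,\sigma']$, not by the magnitude of $h$.

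\emph{Step 1: $h$ is locally Lipschitz, with a non-sharp constant.} Fix a compact subinterval $[a,b]\subset I$ and let $M(\epsilon)$ be the modulus of continuity of $h$ on $[a,b]$, which is finite. For $\sigma,\sigma'\in[a,b]$, Lemma~\ref{lem:intrinsic-lipschitz-metric} applied with $v=\gamma(\sigma)$, $w=\gamma(\sigma')$ bounds $|h(\sigma)-h(\sigma')|$ by $\tfrac{2}{1-L}\,d_\Kor\bigl(\widetilde\gamma(\sigma),\gamma(\sigma')\langle Y\rangle\bigr)$; comparing $\widetilde\gamma(\sigma)$ with the point $\gamma(\sigma')Y^{h(\sigma)}$ of that coset, the same group-law computation shows $d_\Kor\bigl(\widetilde\gamma(\sigma),\gamma(\sigma')\langle Y\rangle\bigr) \le \bigl((\sigma'-\sigma)^4 + 16(\int_\sigma^{\sigma'}(h(\sigma)-h(\tau))\,\ud\tau)^2\bigr)^{1/4}$. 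Bounding the integral by $|\sigma'-\sigma|\,M(|\sigma'-\sigma|)$ and taking the supremum over $\sigma,\sigma'\in[a,b]$ with $|\sigma-\sigma'|\le\epsilon$ produces a self-improving inequality of the form $M(\epsilon)^4 \lesssim_L \epsilon^2\bigl(\epsilon^2 + M(\epsilon)^2\bigr)$; read as a quadratic inequality for $M(\epsilon)^2$, this forces $M(\epsilon)\lesssim_L\epsilon$. Hence $h$ is locally Lipschitz; in particular $g\in C^{1,1}_{\mathrm{loc}}$ and $h$ is differentiable almost everywhere.

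\emph{Step 2: the sharp bound $|h'|\le\tfrac{L}{\sqrt{1-L^2}}$ a.e., and conclusion.} Let $\rho\in I$ be a point of differentiability of $h$. Since $\widetilde\gamma(\rho),\widetilde\gamma(\rho+\epsilon)\in\Gamma$ and $\Gamma$ satisfies the cone condition (equivalently $\Lip(y|_\Gamma)\le L$), we have $|\Delta h|\le L\,d_\Kor\bigl(\widetilde\gamma(\rho),\widetilde\gamma(\rho+\epsilon)\bigr)$, i.e.
\[
  (\epsilon^2+\Delta h^2)^2 + 16\,\Delta z^2 \;\ge\; L^{-4}\,\Delta h^4 ,
\]
where $\Delta h = h(\rho+\epsilon)-h(\rho)$ and $\Delta z$ is the central coordinate of $\widetilde\gamma(\rho)^{-1}\widetilde\gamma(\rho+\epsilon)$. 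Differentiability of $h$ at $\rho$ forces $\Delta z$ to be genuinely of second order: writing $h(\tau)=h(\rho)+h'(\rho)(\tau-\rho)+o(\epsilon)$ uniformly for $\tau\in[\rho,\rho+\epsilon]$, the linear part integrates against $\tfrac{\epsilon}{2}-(\tau-\rho)$ to zero, so $\Delta z = o(\epsilon^2)$. Substituting $\Delta h=h'(\rho)\epsilon+o(\epsilon)$ and $\Delta z=o(\epsilon^2)$ into the displayed inequality, dividing by $\epsilon^4$, and letting $\epsilon\to0$ yields $L^{-4}h'(\rho)^4\le(1+h'(\rho)^2)^2$, i.e.\ $h'(\rho)^2\le\tfrac{L^2}{1-L^2}$. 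Since $h$ is absolutely continuous by Step~1, integrating this bound gives $|h(\tau)-h(s)|\le\tfrac{L}{\sqrt{1-L^2}}|\tau-s|$ for all $s,\tau\in I$, and the reduction of the first paragraph then gives \eqref{eq:g-linearize}.

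I expect Step~1 to be the main obstacle. The cone condition only ever produces a \emph{lower} bound on a combination of $\Delta h$ and $\Delta z$, which is the wrong direction for controlling $\Delta h$; and without some a priori modulus of continuity for $h$ one cannot estimate $\Delta z$ finely enough to run Step~2. The self-improvement scheme of Step~1 breaks this circularity by combining Lemma~\ref{lem:intrinsic-lipschitz-metric} with the observation that $\Delta z$ is governed by the oscillation, rather than the size, of $h$. Once $h$ is known to be Lipschitz, Step~2 is a routine blow-up, and the exact constant $\tfrac{L}{\sqrt{1-L^2}}$ emerges from the aperture of the Kor\'anyi cone $\Cone_L$.
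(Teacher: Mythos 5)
Your argument is correct, and it can be checked entirely within the toolkit of this paper. Note first that the paper does not prove this lemma at all --- it is imported verbatim from \cite[Lemma~2.7]{NY-Foliated} --- so there is no internal proof to compare against; your write-up is in the same spirit as the original (lift the characteristic curve to the horizontal curve $\widetilde\gamma=\Psi_\psi\circ\gamma$ in $\Gamma$ and extract the sharp constant $\tfrac{L}{\sqrt{1-L^2}}$ from the cone condition applied along $\widetilde\gamma$), but it is self-contained, which is a genuine plus here. The computations all check out: the central coordinate of $\widetilde\gamma(\sigma)^{-1}\widetilde\gamma(\sigma')$ is indeed $\int_\sigma^{\sigma'}\bigl(\tfrac{h(\sigma)+h(\sigma')}{2}-h(\tau)\bigr)\ud\tau$, and the comparison point $\gamma(\sigma')Y^{h(\sigma)}$ gives central coordinate $\int_\sigma^{\sigma'}(h(\sigma)-h(\tau))\ud\tau$, so the Step~1 inequality follows from Lemma~\ref{lem:intrinsic-lipschitz-metric}. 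The self-improvement is legitimate precisely because $M(\epsilon)$ is finite a priori ($h=-g'$ is continuous by \eqref{eq:cc-pde} and continuity of $\psi$, and you work on a compact subinterval), and the monotonicity of $M$ is what lets you pass from a pairwise bound to the inequality $M(\epsilon)^4\lesssim_L\epsilon^2(\epsilon^2+M(\epsilon)^2)$, whose quadratic resolution in $M(\epsilon)^2$ does give $M(\epsilon)\lesssim_L\epsilon$. Step~2 is then a routine blow-up: differentiability of $h$ at $\rho$ kills the linear part of $\Delta z$ (it integrates against $\tfrac{\epsilon}{2}-(\tau-\rho)$ to zero), so $\Delta z=o(\epsilon^2)$, and the limit of the cone inequality yields $h'(\rho)^2\le\tfrac{L^2}{1-L^2}$ a.e.; absolute continuity of $h$ from Step~1 and the identity $g(t)-g(s)-g'(s)(t-s)=-\int_s^t(h(\tau)-h(s))\ud\tau$ then give \eqref{eq:g-linearize} with the stated constant. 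Your closing diagnosis is also accurate: the only delicate point is breaking the circularity between controlling $\Delta z$ and controlling the oscillation of $h$, and your Step~1 does exactly that.
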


\subsection{Foliated corona decompositions}
Foliated corona decompositions were recently introduced in \cite{NY-Foliated} to analyze the structure of intrinsic Lipschitz graphs in $\HH$. These decompositions partition an intrinsic Lipschitz graph $\Gamma_f$ into rectangular regions, called pseudoquads, on which $f$ is close to an affine function. In this section, we summarize the results from \cite{NY-Foliated} that we will need in this paper.

Let $\Gamma$ be an intrinsic Lipschitz graph.  A \emph{pseudoquad} $Q$ is a region of $V_0$ bounded by two vertical lines and two characteristic curves of $\Gamma$, i.e., a region of the form
$$\{(x,z)\in V_0\mid x\in I, g_1(x)\le z\le g_2(x)\},$$
where $I=[a,b]\subset \R$ is a closed interval and the $g_i$ are functions with characteristic graphs.  We say that $I$ is the \emph{base} of $Q$ and we call $g_1$ and $g_2$ the \emph{lower} and \emph{upper bounds} of $Q$.

Let $V$ be a vertical plane.  The horizontal curves of $V$ are parallel lines, and their projections to $V_0$ are parallel parabolas.  We call the pseudoquads of $V$ \emph{parabolic rectangles} for $V$; they are bounded by two parallel parabolas and two vertical lines.  If
$$R=\{(x,z)\in V_0\mid x\in I, h_1(x)\le z\le h_2(x)\}$$
is a parabolic rectangle, we define the \emph{width} of $R$ to be $\delta_x(R)=\ell(I)$ and the \emph{height} to be $\delta_z(R)=h_2-h_1$; since the graphs of $h_1$ and $h_2$ are parallel parabolas, these functions differ by a constant. The \emph{slope} of $R$, denoted $\slope(R)$, is $\slope(V)$; note that, by \eqref{eq:cc-pde} we have
\begin{equation}\label{eq:slope-c}
  h_i''(x) = - \slope(R)
\end{equation}
for all $i$ and $x$.

For $r>0$ and an interval $I=[a,b]$, let $rI$ be the scaling of $I$ around its center by a factor of $r$, i.e.,
$$rI=\left[\frac{a+b}{2}-\frac{r \ell(I)}{2},\frac{a+b}{2}+\frac{r \ell(I)}{2}\right].$$
For any $\rho>0$, let
\begin{align*}
  \rho R
  &=\left\{(x,z)\in V_0: x\in \rho I, z\in \rho^2 [h_1(x),h_2(x)]\right\}\\
  &=\left\{(x,z)\in V_0: x\in \rho I, \left|z-\frac{h_1(x)+h_2(x)}{2}\right| \le \frac{\rho^2 \delta_z(R)}{2}\right\};
\end{align*}
this is the concentric parabolic rectangle which has width $\delta_x(\rho R)=\rho \delta_x(R)$ and height $\delta_z(\rho R)=\rho^2 \delta_z(R)$.

For $0<\mu\le \frac{1}{32}$, a \emph{$\mu$--rectilinear pseudoquad} is a tuple $(Q,R)$, where $Q$ is a pseudoquad and $R$ is a parabolic rectangle with the same base $I$ such that, if $g_1$ and $g_2$ (resp.\ $h_1$ and $h_2$) are the lower and upper bounds of $Q$ (resp.\ $R$), then
\begin{equation}\label{eq:def rectilinear 1 and 2}
  \|g_i-h_i\|_{L_\infty(4I)}\le \mu \delta_z(R)
\end{equation}
for $i=1,2$. 
We often omit $R$ from the notation, referring to $(Q,R)$ as simply $Q$.  We define $\slope Q=\slope R$, $\delta_x(Q)=\delta_x(R)$, $\delta_z(Q)=\delta_z(R)$, and $\rho Q=\rho R$.  Note that $1Q = R$, so $1Q\ne Q$ in general. When $\mu=\frac{1}{32}$, we simply say that $Q$ is a rectilinear pseudoquad.

For any rectilinear pseudoquad $Q$, let $\alpha(Q)=\frac{\delta_x(Q)}{\sqrt{\delta_z(Q)}}$ be its \emph{aspect ratio}.  We use a square root here because the distance in the Heisenberg metric between the top and bottom of $Q$ is proportional to $\sqrt{\delta_z(Q)}$; this aspect ratio is scale-invariant.  %Let $|Q|$ be the Lebesgue measure of $Q$ as a subset of $V_0$.

By the following lemma from \cite{NY-Foliated}, any pseudoquad that is sufficiently tall and skinny is rectilinear.
\begin{lemma}[{\cite[Lemma~5.3]{NY-Foliated}}]\label{lem:small-alpha-rectilinear}
  Let $\mu>0$, let $0<L<1$, and let \(\Gamma=\Gamma_f\) be an intrinsic $L$--Lipschitz graph.  There is an $A>0$ with the following property.  Let $Q$ be a pseudoquad for \(\Gamma\) and let $r=\delta_x(Q)$. Suppose that there is a point $v$ in the lower boundary of $Q$ and a point $vZ^s$ in the upper boundary such that $\frac{r}{\sqrt{s}} \le A$. Then there is a parabolic rectangle $R$ such that $(Q,R)$ is $\mu$--rectilinear.
\end{lemma}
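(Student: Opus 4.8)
The plan is to build the parabolic rectangle $R$ directly, by linearizing the two bounding characteristic curves of $Q$ at the point $v$. Write $I=[a,b]$ for the base of $Q$, set $r=\ell(I)=\delta_x(Q)$, and let $g_1\le g_2$ be the lower and upper bounds of $Q$, which we regard as characteristic curves of $\Gamma$ defined on $4I$ (such extensions exist because \eqref{eq:g-linearize} forces a characteristic curve to have Lipschitz derivative, hence it grows at most quadratically and cannot blow up in finite time). Put $x_0=x(v)$, so $v=(x_0,0,g_1(x_0))$ and, since $vZ^s$ lies in the upper boundary of $Q$, $g_2(x_0)-g_1(x_0)=s$ (in particular $s>0$, as the hypothesis $r/\sqrt s\le A$ requires). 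I would then take $R$ to be the slope-$0$ parabolic rectangle with base $I$ whose lower and upper bounds are the parallel lines
\begin{equation*}
  h_1(t)=g_1(x_0)+g_1'(x_0)\,(t-x_0),\qquad h_2(t)=h_1(t)+s=g_2(x_0)+g_1'(x_0)\,(t-x_0).
\end{equation*}
These are precisely the $\Pi$--projections of two parallel horizontal lines of the vertical plane $\{y=-g_1'(x_0)\}$, so $R$ is a genuine parabolic rectangle with $\slope(R)=0$, $\delta_x(R)=r$, and $\delta_z(R)=s$. It then remains to check that, once $A=A(L,\mu)>0$ is chosen small enough (and assuming $\mu\le\tfrac{1}{32}$, as the definition of rectilinearity requires), one has $\|g_i-h_i\|_{L_\infty(4I)}\le\mu\,\delta_z(R)=\mu s$ for $i=1,2$.

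Two estimates do this. First, every $t\in 4I$ satisfies $|t-x_0|\le\tfrac52 r$, so Lemma~\ref{lem:intrinsic-lipschitz-cc} applied to each $g_i$ with base point $x_0$ gives
\begin{equation*}
  |g_i(t)-g_i(x_0)-g_i'(x_0)\,(t-x_0)|\ \le\ \frac{L}{\sqrt{1-L^2}}\cdot\frac{(5r/2)^2}{2}\ =\ \frac{25L}{8\sqrt{1-L^2}}\,r^2,\qquad t\in 4I.
\end{equation*}
Second, by \eqref{eq:cc-pde} we have $g_i'(x_0)=-f(x_0,0,g_i(x_0))$, and since $(x_0,0,g_2(x_0))=v\,Z^{s}$, the vertical Lipschitz bound \eqref{eq:lipschitz-vertical} gives $|g_1'(x_0)-g_2'(x_0)|=|f(v)-f(vZ^{s})|\le\tfrac{4}{1-L}\sqrt s$. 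The first estimate already controls $\|g_1-h_1\|_{L_\infty(4I)}$; for the upper bound, write $h_2(t)=g_2(x_0)+g_1'(x_0)(t-x_0)$ and split
\begin{equation*}
  g_2(t)-h_2(t)=\bigl(g_2(t)-g_2(x_0)-g_2'(x_0)(t-x_0)\bigr)+\bigl(g_2'(x_0)-g_1'(x_0)\bigr)(t-x_0),
\end{equation*}
which, combined with the two estimates, gives $\|g_2-h_2\|_{L_\infty(4I)}\le\tfrac{25L}{8\sqrt{1-L^2}}\,r^2+\tfrac{10}{1-L}\,r\sqrt s$.

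Finally, the aspect-ratio hypothesis $r\le A\sqrt s$ yields $r^2\le A^2 s$ and $r\sqrt s\le A s$, so both quantities $\|g_i-h_i\|_{L_\infty(4I)}$ are bounded by $\bigl(\tfrac{25L}{8\sqrt{1-L^2}}A^2+\tfrac{10}{1-L}A\bigr)s=\bigl(\tfrac{25L}{8\sqrt{1-L^2}}A^2+\tfrac{10}{1-L}A\bigr)\delta_z(R)$; choosing $A>0$ (depending only on $L$ and $\mu$) small enough that the bracketed factor is at most $\mu$ makes $(Q,R)$ a $\mu$--rectilinear pseudoquad, as desired. I expect the only delicate point to be the following: an intrinsic Lipschitz graph is only $C^1$ along its characteristic curves, so the $g_i$ carry no usable second-order Taylor coefficient and one cannot kill a quadratic error term by tuning $\slope(R)$; instead, all of the required smallness has to be extracted from the tall-and-skinny hypothesis, with \eqref{eq:lipschitz-vertical} supplying the key fact that the relative tilt $|g_1'(x_0)-g_2'(x_0)|$ of the two bounding curves is only of order $\sqrt s$ -- hence negligible against the height $s=\delta_z(R)$ once $Q$ is sufficiently thin. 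Everything else will be routine bookkeeping of $L$- and $\mu$-dependent constants.
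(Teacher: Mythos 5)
Your proof is correct: linearizing the bounding characteristic curves at $v$, controlling the quadratic deviation by Lemma~\ref{lem:intrinsic-lipschitz-cc} and the tilt $|g_1'(x_0)-g_2'(x_0)|$ by \eqref{eq:cc-pde} together with \eqref{eq:lipschitz-vertical}, and then absorbing both errors into $\mu\,\delta_z(R)$ via the hypothesis $r\le A\sqrt{s}$ is exactly the mechanism behind this statement. Note that the present paper does not prove the lemma itself but imports it from \cite{NY-Foliated} (Lemma~5.3); your argument is essentially the standard one used there, so there is nothing to flag beyond the bookkeeping you already carried out.
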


A foliated corona decomposition of a graph $\Gamma$ is based on a collection of $\mu$--rectilinear foliated patchworks. These are similar to dyadic partitions or cubical patchworks in that they consist of a hierarchy of partitions of $\Gamma$ into pseudoquads, but an important difference is that different pieces can have different aspect ratios.

\begin{defn}\label{def:rectilinear foliated patchwork}
  Let $Q$ be a $\mu$--rectilinear pseudoquad. A \emph{\(\mu\)--rectilinear foliated patchwork} for $Q$ is a complete rooted binary tree \((\Delta,v_0)\) such that every $v$ in its vertex set \(\cV(\Delta)\) is associated to a $\mu$--rectilinear pseudoquad \((Q_v,R_v) \subset V_0\), $Q_{v_0}=Q$, and each vertex $v\in \cV(\Delta)$ is either \emph{vertically cut} or \emph{horizontally cut} in the following sense.

  Let $w$ and $w'$ be the children of $v$, let $I=[a,b]$ be the base of $Q_v$, and let $g_1$ and $g_2$ (resp.\ $h_1$ and $h_2$) be the lower and upper bounds of $Q_v$ (resp.\ $R_v$).  
  \begin{enumerate}
    \item 
      If $v$ is vertically cut, then $Q_w$ and $Q_{w'}$ are the left and right halves of $Q_v$, separated by the vertical line $x=\frac{a+b}{2}$.  That is,
      $$Q_{w}=\biggl\{(x,z)\in V_0\mid a\le x\le \frac{a+b}{2}, g_1(x)\le z\le g_2(x)\biggr\}$$
      $$Q_{w'}=\biggl\{(x,z)\in V_0\mid \frac{a+b}{2}\le x\le b, g_1(x)\le z\le g_2(x)\biggr\}.$$
      Similarly, $R_{w}=([a, \frac{a+b}{2}] \times \R) \cap R_v$, $R_{w'}=([\frac{a+b}{2}, b] \times \R) \cap R_v$.  Then $\delta_x(Q_w)=\delta_x(Q_{w'})=\frac{\delta_x(Q_v)}{2}$ and $\delta_z(Q_w)=\delta_z(Q_{w'})=\delta_z(Q_v)$.
    \item
      If $v$ is horizontally cut, then $Q_w$ and $Q_{w'}$ are the top and bottom halves of $Q_v$, separated by a characteristic curve.  That is, there is a function $c\from \R\to \R$ with characteristic graph, a quadratic function $k\from \R\to \R$, and a $d>0$ such that
      $$Q_{w}=\{(x,z)\in V_0\mid a\le x\le b, g_1(x)\le z\le c(x)\}$$
      $$Q_{w'}=\{(x,z)\in V_0\mid a\le x\le b, c(x)\le z\le g_2(x)\}$$
      $$R_{w}=\{(x,z)\in V_0\mid a\le x\le b, k(x)-d\le z\le k(x)\}$$
      $$R_{w'}=\{(x,z)\in V_0\mid a\le x\le b, k(x)\le z\le k(x)+d\}.$$
      Then $\delta_x(Q_w)=\delta_x(Q_{w'})=\delta_x(Q_v)$ and $\delta_z(Q_w)=\delta_z(Q_{w'})=d$.  Furthermore, by the $\mu$--rectilinearity of $Q_w$ and $Q_{w'}$,
      \begin{equation}\label{eq:horiz cut edge bounds}
        \max\left\{\|(k-d)-g_1\|_{L_\infty(4 I)}, \|k-c\|_{L_\infty(4 I)},\|(k+d)-g_2\|_{L_\infty(4 I)}\right\}\le \mu d.
      \end{equation}
  \end{enumerate}

  In either case, $Q_v=Q_{w}\cup Q_{w'}$ and the two halves $Q_w$ and $Q_{w'}$ have disjoint interiors.  Let \(\cVv(\Delta)\subset \cV(\Delta)\) be the set of vertically cut vertices and let \(\cVh(\Delta)\subset \cV(\Delta)\) be the set of horizontally cut vertices.
\end{defn}
As with pseudoquads, when $\mu$ is omitted, we take $\mu=\frac{1}{32}$, so any rectilinear foliated patchwork is at least $\frac{1}{32}$--rectilinear.

\begin{lemma} \label{lem:Q-2Q}
  Let $Q$ be a rectilinear pseudoquad.  Then $\frac{2}{3}Q \subseteq Q \subseteq 2Q$.
\end{lemma}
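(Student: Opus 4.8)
The plan is to reduce both inclusions to an elementary comparison between the pseudoquad $Q$ and the parabolic rectangle $R$ that makes $(Q,R)$ rectilinear. Since $\rho Q = \rho R$ by definition of the dilates of a rectilinear pseudoquad, it suffices to prove $\frac{2}{3}R \subseteq Q$ and $Q \subseteq 2R$. I would fix notation as follows: let $I=[a,b]$ be the common base, let $g_1 \le g_2$ be the lower and upper bounds of $Q$ and $h_1 \le h_2$ those of $R$. Because $h_1$ and $h_2$ are parallel parabolas, $h_2 - h_1$ equals the constant $\delta := \delta_z(R)$, so the ``middle parabola'' $m := \tfrac12(h_1+h_2)$ satisfies $h_1 = m - \tfrac{\delta}{2}$ and $h_2 = m + \tfrac{\delta}{2}$, and by the second description of $\rho R$ in the text,
$$\rho R = \Bigl\{(x,z)\in V_0 : x\in \rho I,\ |z - m(x)| \le \tfrac{\rho^2}{2}\delta\Bigr\}.$$
Rectilinearity with $\mu = \tfrac{1}{32}$ gives $\|g_i - h_i\|_{L_\infty(4I)} \le \tfrac{\delta}{32}$ for $i=1,2$, and I would note at the outset that every $x$ occurring below lies in $I \subseteq 2I \subseteq 4I$, so these estimates are always in force.

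Next I would verify the two inclusions directly. For $\frac{2}{3}R \subseteq Q$: a point $(x,z)\in \frac{2}{3}R$ has $x \in \frac23 I \subseteq I$ and $|z-m(x)| \le \tfrac{(2/3)^2}{2}\delta = \tfrac{2}{9}\delta$; combining this with $g_1(x) \le h_1(x)+\tfrac{\delta}{32} = m(x) - \tfrac{15}{32}\delta$ and $g_2(x) \ge h_2(x)-\tfrac{\delta}{32} = m(x)+\tfrac{15}{32}\delta$ and using $\tfrac29 < \tfrac{15}{32}$ yields $g_1(x)\le z\le g_2(x)$, i.e.\ $(x,z)\in Q$. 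For $Q \subseteq 2R$: a point $(x,z)\in Q$ has $x \in I\subseteq 2I$ and $h_1(x) - \tfrac{\delta}{32}\le z\le h_2(x)+\tfrac{\delta}{32}$, which rearranges to $|z-m(x)| \le \tfrac{17}{32}\delta \le \tfrac{2^2}{2}\delta$, i.e.\ $(x,z)\in 2R$.

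I do not anticipate a genuine obstacle in this lemma; it is a bookkeeping computation. The only points deserving care are tracking which of $h_1$, $h_2$, $m$ enters each estimate and checking that the $L_\infty$ bounds from rectilinearity are only ever applied on $4I$, which comfortably contains every relevant $x$. The numerical inequalities have substantial slack — the same argument gives $\frac{2}{3}R\subseteq Q$ with $\frac23$ replaceable by any constant below $\tfrac{\sqrt{15}}{4}$, and $Q\subseteq \rho R$ for every $\rho \ge \tfrac{\sqrt{17}}{4}$ — so the stated constants $\frac23$ and $2$ are convenient rather than sharp.
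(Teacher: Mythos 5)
Your proof is correct and follows essentially the same elementary comparison with the parabolic rectangle $R$ via the rectilinearity bound $\|g_i-h_i\|_{L_\infty(4I)}\le \tfrac{1}{32}\delta_z(R)$ that the paper uses; the $\tfrac{2}{3}R\subseteq Q$ computation is the paper's argument in slightly different bookkeeping (your margin $\tfrac{15}{32}-\tfrac{2}{9}>0$ is their $\tfrac{5}{18}-\tfrac{1}{32}>0$). The only difference is that the paper cites the inclusion $Q\subseteq 2Q$ from Lemma 4.1 of \cite{NY-Foliated}, whereas you verify it directly with the $\tfrac{17}{32}\delta\le 2\delta$ estimate, which is a perfectly valid (and self-contained) substitute.
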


\begin{proof}
  The upper bound is Lemma 4.1 of \cite{NY-Foliated}.

  For the proof of the lower bound, let $R$ be the parabolic rectangle corresponding to $Q$, let $g_1,g_2\from \R \to \R$, $h_1,h_2\from \R \to \R$, and $k_1,k_2\from \R \to \R$ be functions whose graphs are the lower and upper bounds of $Q$, $R$, and $\frac{2}{3}Q$, respectively, and let $I$ be the base of $Q$. Then $h_i$ and $k_i$ are quadratics that all differ by additive constants and $|g_i(x) - h_i(x)| \leq \frac{1}{32} \delta_z(Q)$ for each $x \in I$ and $i \in \{1,2\}$.

  We have that $k_1 - h_1 = \frac{5}{18} \delta_z(Q) = h_2 - k_2$.  Thus, for each $x \in I$, we get
  \begin{align*}
    k_1(x) - g_1(x) \geq k_1(x) - h_1(x) - |g_1(x) - h_1(x)| > 0.
  \end{align*}
  Likewise, $k_2(x) < g_2(x)$ for all $x \in I$ and so $\frac{2}{3}Q \subseteq Q$.
\end{proof}

For any rooted tree $(T,v_0)$ and any $v\in \cV(T)$, let $\cC(v)=\cC^1(v)$ denote the set of \emph{children} of $v$ and let 
$$\cC^n(v)=\bigcup_{w\in \cC^{n-1}(v)} \cC(w)$$
be the set of $n$th generation descendants, letting $\cC^0(v)=\{v\}$.  Let
$$\cC^{\le n}(v)=\bigcup_{i=0}^{n} \cC^i(v)$$
and let $\cD(v)$ be the set of all descendants of $v$, including $v$ itself. We equip $\cV(T)$ with the usual partial order, so that $v\le w$ if $v$ is a descendant of $w$.

We record the following bounds; see \cite[Lemma 4.5]{NY-Foliated}
\begin{lemma}\label{lem:soft cut props}
  Let $\epsilon>0$.  There is a $\mu>0$ such that if \(Q\) is a $\mu$--rectilinear pseudoquad, then
  \begin{align}
      1-\epsilon \le \frac{\delta_x(Q) \cdot \delta_z(Q)}{|Q|}\le 1+\epsilon. \label{eq:soft cuts}
  \end{align}

  Further, if \(Q\) is horizontally or vertically cut as in Definition~\ref{def:rectilinear foliated patchwork} and \(Q'\) is a child of \(Q\), then
  $$\frac{1}{2}-\epsilon\le \frac{|Q'|}{|Q|} < \frac{1}{2}+\epsilon.$$
  If \(Q\) is horizontally cut, then $\delta_z(Q')=\frac{\delta_z(Q)}{2}$ and $\alpha(Q')= \sqrt{2}\alpha(Q)$.
  If \(Q\) is vertically cut, then $\delta_x(Q')=\frac{\delta_x(Q)}{2}$ and 
  $$\frac{1}{2}-\epsilon \le \frac{\alpha(Q')}{\alpha(Q)} \le \frac{1}{2}+\epsilon.$$
  In particular, when $Q$ is a rectilinear pseudoquad (i.e., $\mu=\frac{1}{32}$), then these inequalities hold for $\epsilon=\frac{1}{4}$.
\end{lemma}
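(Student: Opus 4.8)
The plan is to derive every assertion from a single estimate on the shape of a $\mu$--rectilinear pseudoquad. Let $(Q,R)$ be $\mu$--rectilinear with base $I$, and let $g_1,g_2$ (resp.\ $h_1,h_2$) be the lower and upper bounds of $Q$ (resp.\ $R$). Because $h_1,h_2$ are parallel parabolas we have $h_2-h_1\equiv\delta_z(R)$, so \eqref{eq:def rectilinear 1 and 2} gives, for every $x\in I$,
\[
\bigl|(g_2(x)-g_1(x))-\delta_z(R)\bigr|\le\|g_2-h_2\|_{L_\infty(4I)}+\|g_1-h_1\|_{L_\infty(4I)}\le 2\mu\,\delta_z(R).
\]
Integrating this over $I$ and using $|I|=\delta_x(R)=\delta_x(Q)$ and $\delta_z(R)=\delta_z(Q)$, I obtain $(1-2\mu)\,\delta_x(Q)\delta_z(Q)\le|Q|\le(1+2\mu)\,\delta_x(Q)\delta_z(Q)$, i.e.
\[
\frac{1}{1+2\mu}\ \le\ \frac{\delta_x(Q)\,\delta_z(Q)}{|Q|}\ \le\ \frac{1}{1-2\mu}.
\]
Taking $\mu$ small enough that $\bigl[\tfrac1{1+2\mu},\tfrac1{1-2\mu}\bigr]\subseteq[1-\epsilon,1+\epsilon]$ — any $\mu<\tfrac{\epsilon}{2(1+\epsilon)}$ will do — proves \eqref{eq:soft cuts}; for $\mu=\tfrac1{32}$ the interval is $\bigl[\tfrac{16}{17},\tfrac{16}{15}\bigr]\subseteq[\tfrac34,\tfrac54]$, which is why $\epsilon=\tfrac14$ is admissible.

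For the cut estimates I would use the structure of a cut recorded in Definition~\ref{def:rectilinear foliated patchwork}: a vertical or horizontal cut of $Q$ produces children $Q',Q''$ that are again $\mu$--rectilinear, have disjoint interiors with $Q=Q'\cup Q''$, and satisfy $\delta_x(Q')=\delta_x(Q'')$ and $\delta_z(Q')=\delta_z(Q'')$; moreover a vertical cut halves $\delta_x$ and fixes $\delta_z$, while a horizontal cut halves $\delta_z$ and fixes $\delta_x$. Applying the first part of the lemma to $Q'$ and $Q''$ and combining $|Q|=|Q'|+|Q''|$ with $\delta_x(Q')\delta_z(Q')=\delta_x(Q'')\delta_z(Q'')$ yields
\[
\frac{|Q'|}{|Q|}\ \in\ \left[\frac{1-2\mu}{2(1+2\mu)},\ \frac{1+2\mu}{2(1-2\mu)}\right];
\]
shrinking $\mu$ once more ($\mu<\tfrac{\epsilon}{2(1+\epsilon)}$ again suffices, and for $\mu=\tfrac1{32}$ the interval is $\bigl[\tfrac{15}{34},\tfrac{17}{30}\bigr]\subseteq[\tfrac14,\tfrac34)$) gives the stated bounds on $|Q'|/|Q|$. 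The aspect-ratio claims are then purely algebraic: for a horizontal cut $\delta_x(Q')=\delta_x(Q)$ and $\delta_z(Q')=\tfrac{\delta_z(Q)}{2}$, so $\alpha(Q')=\sqrt2\,\alpha(Q)$; for a vertical cut $\delta_x(Q')=\tfrac{\delta_x(Q)}{2}$ and $\delta_z(Q')=\delta_z(Q)$, so $\alpha(Q')=\tfrac12\alpha(Q)\in[\tfrac12-\epsilon,\tfrac12+\epsilon]$. Finally one takes $\mu$ to be the least of the finitely many thresholds above and notes that $\mu=\tfrac1{32}<\tfrac1{10}$ meets all of them when $\epsilon=\tfrac14$.

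There is no genuine obstacle; the lemma is bookkeeping organised around the defect bound \eqref{eq:def rectilinear 1 and 2}. The two points that require a moment's care are that this defect is controlled on the enlargement $4I$ whereas $|Q|$ is only an integral over $I\subset 4I$ (so the comparison is immediate), and that the children of a cut arrive, by Definition~\ref{def:rectilinear foliated patchwork}, already equipped with their own parabolic rectangles, the same rectilinearity constant $\mu$, and their own side lengths, so the first part of the lemma may be applied to them verbatim. After that it is just the elementary fact that $\tfrac{1\pm2\mu}{1\mp2\mu}\to1$ as $\mu\to0^{+}$.
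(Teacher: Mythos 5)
Your argument is sound where it can be checked against this paper, and it is worth noting that the paper itself gives no proof of this lemma at all: it is quoted verbatim from \cite[Lemma 4.5]{NY-Foliated}, so your write-up supplies the bookkeeping that the authors delegate to the reference. The derivation of \eqref{eq:soft cuts} by integrating the defect bound $|(g_2-g_1)-\delta_z(R)|\le 2\mu\,\delta_z(R)$ over $I$, the resulting interval $\bigl[\tfrac{1}{1+2\mu},\tfrac{1}{1-2\mu}\bigr]$, the child-measure ratio via $|Q|=|Q'|+|Q''|$ and equal products $\delta_x(Q')\delta_z(Q')=\delta_x(Q'')\delta_z(Q'')$, and all the numerical checks at $\mu=\tfrac1{32}$ are correct.

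The one point you should not wave through is the claim that ``a horizontal cut halves $\delta_z$,'' which you attribute to Definition~\ref{def:rectilinear foliated patchwork}. As transcribed in this paper, that definition only says the two children's parabolic rectangles have some common height $d>0$ subject to \eqref{eq:horiz cut edge bounds}; combining \eqref{eq:horiz cut edge bounds} with the rectilinearity of the parent yields only $|2d-\delta_z(Q_v)|\le 2\mu\bigl(d+\delta_z(Q_v)\bigr)$, i.e.\ $d=\bigl(\tfrac12+O(\mu)\bigr)\delta_z(Q_v)$, not exact halving. So the exact statements $\delta_z(Q')=\tfrac{\delta_z(Q)}{2}$ and $\alpha(Q')=\sqrt2\,\alpha(Q)$ are a normalization built into the construction in \cite{NY-Foliated} (whence the quoted lemma), not a consequence of the definition as reproduced here; to be self-contained you should either invoke that convention explicitly or prove the approximate version $\delta_z(Q')=\bigl(\tfrac12+O(\mu)\bigr)\delta_z(Q)$, which is all that the later arguments of this paper (e.g.\ \eqref{eq:Fi-heights} and Lemma~\ref{lem:horizontal differences}) actually use. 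Conversely, observe that under the transcribed definition a vertical cut leaves $R_v$'s height unchanged, so your computation gives the exact identity $\alpha(Q')=\tfrac12\alpha(Q)$, which is stronger than the two-sided bound stated in the lemma; the asymmetry in the quoted statement reflects the slightly different conventions of \cite{NY-Foliated} rather than anything missing from your argument.
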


We produce rectilinear foliated patchworks by repeatedly cutting a rectilinear pseudoquad into smaller pseudoquads. Any rectilinear pseudoquad $Q$ can be cut vertically into two $\mu$--rectilinear pseudoquads of the same height and half the width, but not every pseudoquad can be cut horizontally. Cutting along a characteristic curve through the center of $Q$ might not produce two rectilinear pseudoquads, since the upper and lower bounds of the new pseudoquads need not satisfy \eqref{eq:def rectilinear 1 and 2}.

The main technical result of \cite{NY-Foliated} shows that if a rectilinear pseudoquad is \emph{paramonotone}, then it can be cut horizontally into two rectilinear pseudoquads. Paramonotonicity is a condition based on the monotonicity introduced in \cite{CKMono}.  For any intrinsic Lipschitz graph $\Gamma$ and any $R>0$, there is a measure $\Omega^P_{\Gamma^+ ,R}$ on $V_0$ called the $R$--extended parametric normalized nonmonotonicity.  The full definition of $\Omega^P_{\Gamma^+ ,R}$ can be found in Section~8 of \cite{NY-Foliated}, but for $U\subset V_0$, $\Omega^P_{\Gamma^+ ,R}(U)$ measures the horizontal lines $L$ such that $L$ intersects $\Gamma$ multiple times in an $R$--neighborhood of $\Pi^{-1}(U)$.

This satisfies the kinematic formula
\begin{equation}\label{eq:kinematic}
  \sum_{i=-\infty}^\infty \Omega^P_{\Gamma^+,2^{-i}}(U)\lesssim_L |U|
\end{equation}
for any measurable $U\subset V_0$ \cite[Lem.~9.2]{NY-Foliated}. Furthermore, by inspection of the definition \cite[(155)]{NY-Foliated},
\begin{equation}\label{eq:Omega length change}
  \Omega^P_{\Gamma^+,R} \le \frac{R'}{R} \Omega^P_{\Gamma^+,R'}
\end{equation}
for all $R< R'$.

Let $Q$ be a pseudoquad of $\Gamma$.  We say that $\Gamma$ is \emph{$(\eta, R)$--paramonotone} on $rQ$ if the density of $\Omega^P_{\Gamma^+ ,R}$ on $rQ$ satisfies
\begin{equation}\label{eq:def-paramonotone}
  \frac{\Omega^P_{\Gamma^+,R\delta_x(Q)}(r Q)}{|Q|} \le \eta \alpha(Q)^{-4}.
\end{equation}
This condition is invariant under scalings, stretch maps, and shear maps. The results of \cite{NY-Foliated} show that paramonotone pseudoquads satisfy strong bounds on their characteristic curves.

\begin{prop}[{\cite[Prop.\ 7.2]{NY-Foliated}}]\label{prop:Omega-control}
  There is a universal constant $r>10$ such that for any $\lambda>0$ and $0<\zeta\le \frac{1}{32}$, there are $\eta, R > 0$ such that if \(\Gamma=\Gamma_f\) is the intrinsic Lipschitz graph of $f\from V_0\to \R$, and if $Q$ is a rectilinear pseudoquad for \(\Gamma\) such that \(\Gamma\) is $(\eta, R)$--paramonotone on $rQ$, then:
  \begin{enumerate}
  \item \label{it:omega control plane}
    There is a vertical plane $P\subset \H$ (a $\lambda$--approximating plane) and an affine function $F\from V_0\to \R$ such that $P$ is the intrinsic graph of $F$ and
    \begin{equation}
      \frac{\|F-f\|_{L_1(10 Q)}}{|Q|} \le \lambda \frac{\delta_z(Q)}{\delta_x(Q)}.
    \end{equation}
  \item \label{it:omega control characteristics}
    Let $u\in 4 Q$ and let $g_\Gamma, g_P\from \R\to \R$ be  such that $\{z=g_\Gamma(x)\}$ (respectively $\{z=g_P(x)\}$) is a characteristic curve for \(\Gamma\) (respectively $P$) that passes through $u$.  Then $$\|g_P-g_\Gamma\|_{L_\infty(4 I)}\le \zeta \delta_z(Q).$$
  \end{enumerate}
\end{prop}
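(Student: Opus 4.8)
My plan is to prove Proposition~\ref{prop:Omega-control} by contradiction, after normalizing the configuration and disposing of the degenerate aspect ratios; the decisive external input is the rigidity of monotone subsets of $\HH$. Since paramonotonicity is invariant under the scalings, stretch maps and shear maps acting on intrinsic graphs, and the two conclusions transform accordingly, I may assume that $Q$ has base $I = [-\tfrac12,\tfrac12]$, slope $0$, and is centered at the origin, so its parabolic rectangle is $[-\tfrac12,\tfrac12]\times[-\tfrac h2,\tfrac h2]$ with $h := \delta_z(Q) = \alpha(Q)^{-2}$; the only remaining parameter is $h$, and I must produce $\eta, R$ depending only on $\lambda, \zeta$ (and $L$) that work for every $h > 0$. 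When $h$ is large --- equivalently $\alpha(Q)$ is small, so $Q$ is very tall and thin --- both conclusions hold with no monotonicity hypothesis at all: by Lemma~\ref{lem:intrinsic-lipschitz-metric} the oscillation of $f$ on the rescaled box $10Q$ is $\lesssim_L \sqrt h + 1$, whereas the targets $\lambda h\,|Q| \approx \lambda h^{2}$ and $\zeta h$ grow faster in $h$, so taking $F$ equal to the mean of $f$ on $10Q$, $P = \{y = F\}$, and invoking the linearization estimate \eqref{eq:g-linearize} for the characteristic curves, both inequalities hold once $h \geq h_2(\lambda, \zeta, L)$. I may therefore assume henceforth that $h \leq h_2$.

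For the remaining range I argue by contradiction. If the proposition fails for these $\lambda, \zeta$ and a suitably large universal constant $r$, then for a suitable fixed $R_0 > 0$ and every $n$ there is an intrinsic $L$--Lipschitz graph $\Gamma_n = \Gamma_{f_n}$, normalized as above with $h_n \in (0, h_2]$, that is $(\tfrac1n, R_0)$--paramonotone on $rQ_n$ but for which no admissible pair $(P, F)$ witnesses both conclusions. The $f_n$ are uniformly $L$--intrinsic Lipschitz, hence uniformly bounded and equicontinuous on bounded sets, so after passing to a subsequence $f_n \to f_\infty$ locally uniformly, $h_n \to h_\infty \in [0, h_2]$, and $\Gamma_\infty := \Gamma_{f_\infty}$ is an intrinsic $L$--Lipschitz graph. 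Since $\delta_x(Q_n) = 1$, $|Q_n| \approx h_n \leq h_2$ and $\alpha(Q_n)^{-4} = h_n^2$, paramonotonicity forces $\Omega^P_{\Gamma_n^+, R_0}(rQ_n) \to 0$; as the parametric non-monotonicity is lower semicontinuous under local uniform convergence of the defining functions --- the notion of ``multiple crossing'' entering its definition being robust under perturbation, and using \eqref{eq:Omega length change} to compare scales --- we get $\Omega^P_{\Gamma_\infty^+, R_0}(rQ_\infty) = 0$, i.e.\ $\Gamma_\infty$ is monotone on $\inter(rQ_\infty)$. By the rigidity of (locally) monotone subsets of $\HH$ --- a monotone subset of an intrinsic Lipschitz graph is contained in a vertical plane, see \cite{CKMono,NY1} --- $\Gamma_\infty$ agrees on $\inter(rQ_\infty) \supset 10Q_\infty$ with a vertical plane $P_\infty = \Gamma_{F_\infty}$, $F_\infty$ affine, so both conclusions hold for $\Gamma_\infty$ with $(P_\infty, F_\infty)$ and with strict inequalities. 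It remains to transfer this back: $f_n \to f_\infty$ in $L_1$ of (the limit of) the boxes $10Q_n$, the characteristic ODE \eqref{eq:cc-pde} is stable under uniform convergence of its right-hand side, and since $\tfrac1n \to 0$ the monotonicity of $\Gamma_n$ confines all characteristic curves of $\Gamma_n$ through any given $u \in 4Q_n$ to within $o(1)$ of one another, so branching causes no trouble; hence $(P_\infty, F_\infty)$ witnesses both conclusions for all large $n$ --- a contradiction.

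The step I expect to be the main obstacle is the limit in the extremely flat regime $h_n \to 0$, $\alpha(Q_n) \to \infty$: there $10Q_n$ collapses onto a segment of the $x$--axis and the a priori oscillation bound $\lesssim_L \sqrt{h_n}$ degrades only to $o(1)$, whereas the $L_1$--target $\lambda h_n^{2}$ demands oscillation $O(h_n)$. The compactness step must therefore be supplied with a \emph{quantitative} flatness estimate --- morally, that when $\Gamma$ is $(\eta, R)$--paramonotone the $L_2$--average of the distance from $f$ to the affine functions on $rQ$ is $\lesssim \eta^{1/2} h$ --- before one rescales by the (non-isometric) automorphism of $\HH$ that normalizes the height; one must control a rate, not merely a limit. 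This is where I expect the real work, and it is also where the self-contained argument of \cite{NY-Foliated} proceeds differently, using the kinematic formula \eqref{eq:kinematic} together with the structure equation \eqref{eq:cc-pde} to bound, in terms of the non-monotonicity of $rQ$, the variation of the slopes $-g_\Gamma'$ of the characteristic curves meeting $4Q$; integrating these slope bounds produces the affine approximant $F$ and the plane $P$, after which part~\ref{it:omega control characteristics} follows by comparing the two ODEs.
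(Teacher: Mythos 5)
Two things first: this proposition is not proved in the paper at all---it is imported verbatim from \cite{NY-Foliated} (Prop.\ 7.2)---so any complete argument would have to replace the main technical work of that paper, and your sketch does not. The decisive gap is the one you flag yourself. After normalizing $\delta_x(Q)=1$, $\delta_z(Q)=h=\alpha(Q)^{-2}$, the hard case is $h$ small (wide, flat pseudoquads). There the conclusions demand $\|F-f\|_{L_1(10Q)}\lesssim \lambda h^2$ and $\|g_P-g_\Gamma\|_{L_\infty(4I)}\le \zeta h$, while the a priori information is only an oscillation bound of order $\sqrt{h}$; since the targets degenerate as $h\to 0$, a contradiction sequence may well have $h_n\to 0$, the boxes $10Q_n$ collapse onto a segment, and the soft limit $f_\infty$ carries no rate information---no contradiction is produced. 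This degenerate regime is exactly the content of Prop.~7.2 of \cite{NY-Foliated} (quantitative stability of monotone sets on long thin rectangles, at the $\eta\,\alpha(Q)^{-4}$ threshold), and your unconditional argument for $h\ge h_2(\lambda,\zeta,L)$ only recovers the easy regime, which is essentially the territory of Lemma~\ref{lem:low aspect pseudoquads}. A compactness scheme that ``controls a limit, not a rate'' cannot close this case, as you acknowledge.

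There is also a genuine error in the non-degenerate case $h_n\to h_\infty>0$: the rigidity you invoke is a global statement, while your limit is only monotone with respect to lines confined to a bounded ($R_0$-extended) region. The Cheeger--Kleiner classification says globally monotone subsets of $\HH$ are half-spaces bounded by Euclidean planes; since $x$, $y$, $z$ are affine along every horizontal line, every Euclidean half-space, e.g.\ $\{z>cy+d\}$ with $c\neq 0$, is monotone, and a piece of such a tilted plane can satisfy the $L$--cone condition on a bounded region while lying at distance comparable to $\delta_z(Q)/\delta_x(Q)$ (not $\lambda\,\delta_z(Q)/\delta_x(Q)$) from every vertical plane, and with non-parabolic characteristic curves. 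So ``monotone on $\inter(rQ_\infty)$'' does not force $\Gamma_\infty$ to agree with a \emph{vertical} plane there, and with $R_0$ fixed no contradiction follows; verticality comes from globality, which is why $R$ appears in the statement and is taken large (depending on $\lambda$) in \cite{NY-Foliated}. One could repair this by sending $R\to\infty$ along the contradiction sequence and proving the lower semicontinuity of $\Omega^P_{\Gamma^+,R}$ that you assert without proof, but even so the $h_n\to 0$ gap remains, so the proposal does not establish the proposition; the paper's route is simply to cite the quantitative argument of \cite{NY-Foliated}.
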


We can thus produce a rectilinear foliated patchwork for a pseudoquad $Q$ by inductively cutting $Q$ into smaller and smaller pseudoquads. If a pseudoquad is paramonotone, we cut it horizontally. If not, we cut it vertically. We then repeat the process on the two new pseudoquads.

The resulting decomposition then satisfies certain bounds, which we describe below.
\begin{defn}\label{def:weighted Carleson}
  For $S\subset \cV(\Delta)$, let $W(S)=\sum_{w\in S} \alpha(Q_w)^{-4}|Q_w|$; we call this the \emph{weight} of \(S\).  We say that a rectilinear foliated patchwork \(\Delta\) satisfies a \emph{weighted Carleson packing condition} or that \(\Delta\) is \emph{$C$--weighted-Carleson} if every \(v\in \cV(\Delta)\) satisfies
  \begin{equation}\label{eq:def weighted Carleson}
    W(\{w\in \cVv(\Delta) \mid  w\le v\}) \le C |Q_v|.
  \end{equation}
\end{defn}

For any rectilinear pseudoquad $R$, $|R|\approx \delta_x(R)\delta_z(R)$, so
\begin{equation}\label{eq:weight deltax}
  W(\{R\}) = \alpha(R)^{-4} |R| \approx \left(\frac{\delta_x(R)}{\sqrt{\delta_z(R)}}\right)^{-4}|R|\approx \frac{\delta_z(R)^3}{\delta_x(R)^3}.
\end{equation}

\begin{defn}\label{def:lambda approximating}
  A set of \emph{$\lambda$--approximating planes} for $\Delta$ is a collection of vertical planes $P_v$ with corresponding vertical affine functions $l_v\from V_0\to \R$, one for each $v\in \cVh(\Delta)$, such that
  \begin{equation}\label{eq:def lambda approximating}
    |Q_v|^{-1}\|l_v-f\|_{L_1(10 Q_v)}%=|Q_v|^{-1}\int_{10 Q_v} |f_v(w)-f(w)|\ud w
    \le \lambda \frac{\delta_z(Q_v)}{\delta_x(Q_v)}.
  \end{equation}
\end{defn}

\begin{defn} \label{def:foliated corona}
  Let $\mu_0>0$ and let $D\from \R^+\times \R^+\to \R^+$.  We say that an intrinsic Lipschitz graph \(\Gamma\) admits a \emph{$(D,\mu_0)$--foliated corona decomposition} if for every $0<\mu\le \mu_0$, every $\lambda>0$ and every $\mu$--rectilinear pseudoquad $Q_0\subset V_0$, there is a $\mu$--rectilinear foliated patchwork $\Delta$ for $Q_0$ such that $\Delta$ is $D(\mu,\lambda)$--weighted-Carleson and has a set of $\lambda$--approximating planes.  When $D$ and $\mu_0$ are not important, we simply say that $\Gamma$ admits a foliated corona decomposition.  
\end{defn}

\begin{thm}[{\cite[Thm.\ 7.5]{NY-Foliated}}]\label{thm:future-use}
  Any intrinsic Lipschitz graph admits a foliated corona decomposition.

  That is, let $r>10$ be as in Proposition~\ref{prop:Omega-control}. 
  For every $0<L<1$ and for every $0<\mu \le\frac{1}{32}$ and $\lambda >0$, there are $D=D(L,\mu,\lambda)$, $\eta=\eta(\mu,\lambda)$, and $R=R(\mu,\lambda)$, all positive, with the following properties. Suppose that \(\Gamma\subset \H\)  is an intrinsic $L$--Lipschitz graph and $Q\subset V_0$ is a $\mu$--rectilinear pseudoquad for $\Gamma$. Then there is a $\mu$--rectilinear foliated patchwork $\Delta$ for $Q$ such that $\Delta$ is $D$--weighted-Carleson and has a set of $\lambda$--approximating planes. Moreover, for all vertices $v\in \cV(\Delta)$, the associated pseudoquad $Q_v$ is horizontally cut if and only if \(\Gamma\) is $(\eta,R)$--paramonotone on $rQ$.

  Furthermore, $\eta$ and $R$ satisfy Proposition~\ref{prop:Omega-control} for $\zeta=\frac{1}{32r^2}$.
\end{thm}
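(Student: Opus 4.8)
The plan is to build the rectilinear foliated patchwork $\Delta$ for $Q$ by a greedy top-down recursion and then to verify the two required properties. Fix $0<L<1$, $0<\mu\le\tfrac1{32}$ and $\lambda>0$, let $r>10$ be the universal constant of Proposition~\ref{prop:Omega-control}, and apply that proposition with this $\lambda$ and with $\zeta$ chosen small enough (in particular $\zeta\le\tfrac1{32r^2}$, and $\zeta=\tfrac1{32r^2}$ when $\mu=\tfrac1{32}$) so that the horizontal cuts below are $\mu$--rectilinear; this produces $\eta=\eta(\mu,\lambda)>0$ and $R=R(\mu,\lambda)>0$. Starting from the root $Q_{v_0}:=Q$, process each $\mu$--rectilinear pseudoquad $Q_v$ already produced: if $\Gamma$ is $(\eta,R)$--paramonotone on $rQ_v$, i.e.\ if \eqref{eq:def-paramonotone} holds with neighbourhood scale $R\,\delta_x(Q_v)$, cut $Q_v$ horizontally; otherwise cut it vertically. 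A vertical cut is always available, and (see below) a horizontal cut of a paramonotone pseudoquad yields two $\mu$--rectilinear children, so the recursion never halts and yields a complete rooted binary tree $\Delta$. By construction a vertex $v$ is horizontally cut if and only if $\Gamma$ is $(\eta,R)$--paramonotone on $rQ_v$, which is the last assertion; and since a smaller $\zeta$ makes the conclusion of Proposition~\ref{prop:Omega-control}\eqref{it:omega control characteristics} stronger, the resulting $\eta,R$ satisfy that proposition for $\zeta=\tfrac1{32r^2}$.

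To see that every pseudoquad in $\Delta$ is $\mu$--rectilinear, argue by induction; the root is so by hypothesis. If $Q_v$ is cut vertically, halving its base and its associated parabolic rectangle preserves \eqref{eq:def rectilinear 1 and 2}, because $4I'\subseteq 4I$ for either half-base $I'$. If $Q_v$ is cut horizontally, then $\Gamma$ is paramonotone on $rQ_v$, so Proposition~\ref{prop:Omega-control}\eqref{it:omega control plane}--\eqref{it:omega control characteristics} supplies an approximating plane $P_v$ whose characteristic curves — a family of parallel parabolas — lie within $\zeta\delta_z(Q_v)$ of the characteristic curves of $\Gamma$ throughout $4Q_v$. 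Applying this to the lower bound and upper bound of $Q_v$ and to the characteristic curve $c$ through the centre of $Q_v$, and then taking $c$ as the dividing curve, $k$ as the $P_v$--parabola through the centre, and $d:=\tfrac12\delta_z(Q_v)$ (adjusted by an $O(\zeta\delta_z(Q_v))$ amount so $k\pm d$ best match the $P_v$--approximants of the two bounds), all three quantities in \eqref{eq:horiz cut edge bounds} are $O(\zeta\delta_z(Q_v))$, which is $\le\mu d$ since $d\approx\tfrac12\delta_z(Q_v)$ and $\zeta$ is a small multiple of $\mu$; this is exactly the role of $\zeta=\tfrac1{32r^2}$. For the $\lambda$--approximating planes I would simply take, for each $v\in\cVh(\Delta)$, the plane $P_v$ and affine function $l_v$ of Proposition~\ref{prop:Omega-control}\eqref{it:omega control plane}, whose bound is precisely \eqref{eq:def lambda approximating} (with $f$ the function with $\Gamma=\Gamma_f$).

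It remains to establish the weighted Carleson packing condition \eqref{eq:def weighted Carleson}, which is the heart of the matter. Fix $v\in\cV(\Delta)$. Each vertically cut $w\le v$ fails paramonotonicity on $rQ_w$, so $\alpha(Q_w)^{-4}|Q_w|<\eta^{-1}\Omega^P_{\Gamma^+,R\delta_x(Q_w)}(rQ_w)$, whence
\[
  W\bigl(\{w\in\cVv(\Delta):w\le v\}\bigr)\ \le\ \eta^{-1}\sum_{\substack{w\le v\\ w\in\cVv(\Delta)}}\Omega^P_{\Gamma^+,R\delta_x(Q_w)}(rQ_w).
\]
I would bound the right-hand side by $\lesssim_L|Q_v|$ by organizing the sum by scale, using the following structural facts from Definition~\ref{def:rectilinear foliated patchwork} and Lemma~\ref{lem:soft cut props}: $\delta_x(Q_w)$ is halved by a vertical cut and unchanged by a horizontal cut, so it takes the discrete values $2^{-k}\delta_x(Q_v)$; no two comparable vertically cut vertices share a value of $\delta_x$ (the first child-cut of a vertically cut vertex decreases $\delta_x$), so for each fixed $k$ the pseudoquads $Q_w$ of the vertically cut $w\le v$ with $\delta_x(Q_w)=2^{-k}\delta_x(Q_v)$ are pairwise interior-disjoint subsets of $Q_v$; and a horizontal cut halves $\delta_z$, so within a fixed $x$--column the admissible heights form a geometric progression. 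Using \eqref{eq:Omega length change} to replace $R\delta_x(Q_w)$ by the nearest dyadic scale $t_k\approx R2^{-k}\delta_x(Q_v)$ costs only a bounded factor; the $k$-th scale then contributes a fixed measure $\Omega^P_{\Gamma^+,t_k}$ evaluated, with controlled multiplicity, on a bounded enlargement of $Q_v$, and summing over $k$ and invoking the kinematic formula \eqref{eq:kinematic} gives the desired bound, hence a constant $D=D(L,\mu,\lambda)$.

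The \emph{main obstacle} is precisely the multiplicity estimate in the last step. Since horizontal cuts leave $\delta_x$ fixed while shrinking $\delta_z$, at a given $x$--column and scale $k$ the dilates $rQ_w$ can be arbitrarily nested in the $z$--direction, so the naive bounded-overlap claim for $\{rQ_w\}$ fails near the $z$--boundaries (one can check the overlap grows logarithmically there). One must instead exploit the geometric decay of the weights $\alpha(Q_w)^{-4}|Q_w|\approx\delta_z(Q_w)^3\delta_x(Q_w)^{-3}$ along such a nested family — so that the contribution of the nested pieces to $W$ is controlled by the $\alpha$ of the enclosing pseudoquad — together with the genuine disjointness of the $Q_w$ at each fixed scale, using Lemma~\ref{lem:Q-2Q} to pass from $rQ_w$ to a comparable ancestor when necessary; making this bookkeeping uniform over all scales $k$ is the crux. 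Everything else — the handling of the $4I$ and $10Q_v$ enlargements, the branching of characteristic curves for non-smooth $f$ (already accommodated by Lemmas~\ref{lem:intrinsic-lipschitz-cc} and~\ref{lem:small-alpha-rectilinear}), and tracking the dependence of $D,\eta,R$ — is routine once this is in place.
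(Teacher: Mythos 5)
This statement is not proved in the paper at all: it is quoted verbatim from \cite{NY-Foliated} (Theorem~7.5 there), and the only added content is the parenthetical remark that, by inspection of that proof, the parameters $\eta$ and $R$ can be taken to satisfy Proposition~\ref{prop:Omega-control} with $\zeta=\frac{1}{32r^2}$. Your proposal instead tries to reprove the theorem from scratch. The construction you describe (cut horizontally when $(\eta,R)$--paramonotone on $rQ_v$, vertically otherwise; take the planes of Proposition~\ref{prop:Omega-control}\eqref{it:omega control plane} as the $\lambda$--approximating planes; use part~\eqref{it:omega control characteristics} with $\zeta$ small relative to $\mu$ to keep horizontal cuts $\mu$--rectilinear) is indeed the construction used in \cite{NY-Foliated}, and the ``horizontally cut iff paramonotone'' and ``$\eta,R$ satisfy Proposition~\ref{prop:Omega-control}'' clauses come out of it as you say.

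However, there is a genuine gap, and you name it yourself: the weighted Carleson packing condition \eqref{eq:def weighted Carleson} is the main content of the theorem, and your argument for it stops at ``making this bookkeeping uniform over all scales $k$ is the crux.'' The step that fails is exactly the one you flag: after reducing to $\sum_{w}\Omega^P_{\Gamma^+,R\delta_x(Q_w)}(rQ_w)$, the enlargements $rQ_w$ at a fixed width scale $2^{-k}\delta_x(Q_v)$ do \emph{not} have bounded overlap (pseudoquads of the same width can have very different heights, so a large $rQ_w$ can contain unboundedly many small ones), and ``geometric decay of the weights along nested families'' is a heuristic, not an estimate. What is needed is a quantitative selection statement of Vitali type asserting that one can pass to a subfamily $V_k(v)\subset P_k(v)$ with pairwise disjoint $rQ_w$ \emph{and comparable total weight}, $W(V_k(v))\approx W(P_k(v))$; with that in hand, disjointness inside $rQ_v$ (Lemma~\ref{lem:child-hierarchy}), \eqref{eq:Omega length change}, and the kinematic formula \eqref{eq:kinematic} close the argument exactly as in your sketch. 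This weight-comparison lemma is nontrivial — it is Lemma~9.4 of \cite{NY-Foliated}, quoted in this paper as Lemma~\ref{lem:grid coverings}, and it (together with the kinematic formula for $\Omega^P$) is the real engine behind the Carleson bound; see Corollary~\ref{cor:vitali weights} for how the present paper runs this exact scheme. So your proposal is a correct outline of the construction but does not constitute a proof of the packing condition; either invoke \cite{NY-Foliated} for it (as the paper does) or supply a proof of the covering/weight-comparison lemma.
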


(The condition that $\eta$ and $R$ satisfy Proposition~\ref{prop:Omega-control} is not part of the statement of Theorem~7.5 of \cite{NY-Foliated}, but in the proof, $\eta$ and $R$ are chosen to satisfy Proposition~\ref{prop:Omega-control} for a suitable $\zeta$ and $\lambda$.)

\section{Initial reductions}\label{sec:initial}
In this section, we reduce Theorem~\ref{thm:beta4} to a bound on a parametric $L_4$ version of $\beta_\Gamma$ on a pseudoquad for $\Gamma$. We first define $L_p$ $\beta$--numbers. For $E \subset \H, x \in E, r>0,$ and $p \in [1,\infty)$, let
\begin{equation}\label{eq:def betap} \beta_{p,E}(x,r):=\inf_{L \in \mathsf{VP}} \left[ r^{-3} \int_{B(x,r)\cap E} \left(\frac{d(y,L)}{r}\right)^p  \; \ud  \mathcal{H}^{3}(y)\right]^{1/p}.
\end{equation}
Recalling \eqref{eq:def beta}, we note that $\beta_{E}=\beta_{1,E}$. Using H\"older's inequality one easily sees that if $1\le p<q$ then
\begin{equation}
\label{eq:betapqrel}
\beta_{p,E}(x,r) \leq \left(\frac{\mathcal{H}^{3}(E \cap B(x,r))}{r^{3}} \right)^{1/p-1/q} \beta_{q,E}(x,r).
\end{equation}
In particular, if $E$ is Ahlfors $3$--regular, then $\beta_{p,E}(x,r)\lesssim \beta_{q,E}(x,r)$. Consequently, \eqref{eq:unbdbeta} holds when $\beta_{1,\Gamma}$ is replaced by $\beta_{p,\Gamma}$ for $p>1$. 

For any measurable function $\psi \from \HH \to \R$ that is constant on cosets of $\paramY$ and any $p  \geq 1$ we define a parametric version of $\beta$ by
\begin{equation}\label{eq:def-gamma}
  \gamma_{p,\psi}(v,r) = r^{\frac{-3-p}{p}} \inf_{h\in \Aff} \|\psi - h\|_{L_p(V(\Psi_\psi(v),r))}
\end{equation}
where
\begin{align*}
  V(v,r) := \Pi(B(v,r)),
\end{align*}
and  $\Aff$ denotes all functions $h: \HH \to \R$ of the form $h(v) = ax(v) + b$, for $a,b \in \R$. Observe that every vertical plane with finite slope is a graph of  a function in $\Aff$.   Note also that $\gamma_{p,\psi}(\cdot,r)$ is constant on cosets of $Y$.

The sets $V(v,r)$ are shaped like parallelograms in $V_0$, with slope depending on $y(v)$.
\begin{lemma} \label{lem:proj-quad}
  For any $r>0$ and $p\in \HH$, let $(x_0,0,z_0)=\Pi(p)$. Then
  \begin{equation}\label{eq:proj-quad}
    V(p,r) \subset \{(x,0,z) \mid |x-x_0|\le r, |z - z_0 + y(p) (x-x_0)| \le r^2\}.
  \end{equation}
\end{lemma}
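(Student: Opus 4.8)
The plan is to reduce to a pointwise statement about a single point of the ball and then do a short coordinate computation. Write $p=(a,b,c)$, so that $\Pi(p)=(a,0,c-\tfrac12 ab)$, i.e.\ $x_0=a$, $z_0=c-\tfrac12 ab$, and $y(p)=b$. Fix an arbitrary $q=(x,y,z)\in B(p,r)$; I must show $\Pi(q)$ lies in the parallelogram-shaped set on the right of \eqref{eq:proj-quad}. Using \eqref{eq:heis-mult} I would first compute
\[
  p^{-1}q=\Bigl(x-a,\ y-b,\ z-c+\tfrac{bx-ay}{2}\Bigr),
\]
and then read off from $\|p^{-1}q\|_{\Kor}<r$ (looking separately at the two summands under the fourth root in $\|\cdot\|_{\Kor}$) the two bounds
\[
  (x-a)^2+(y-b)^2<r^2
  \qquad\text{and}\qquad
  \Bigl|z-c+\tfrac{bx-ay}{2}\Bigr|<\tfrac14 r^2 .
\]

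The first of these already gives $|x-x_0|=|x-a|<r$, which is the first inequality claimed in \eqref{eq:proj-quad}. For the second inequality, recall that $\Pi(x,y,z)=(x,0,z-\tfrac12 xy)$, so the only thing to estimate is the $z$--coordinate $z-\tfrac12 xy$ of $\Pi(q)$. The key step is the algebraic identity
\[
  \Bigl(z-\tfrac12 xy\Bigr)-z_0+y(p)(x-x_0)
  =\Bigl(z-c+\tfrac{bx-ay}{2}\Bigr)+\tfrac12(b-y)(x-a),
\]
which I would verify by expanding both sides using $z_0=c-\tfrac12 ab$ and $y(p)=b$. Given the identity, the first term on the right has absolute value $<\tfrac14 r^2$ by the vertical bound above, while the second term satisfies $\tfrac12|b-y|\,|x-a|\le \tfrac14\bigl((x-a)^2+(y-b)^2\bigr)<\tfrac14 r^2$ by the arithmetic--geometric mean inequality and the horizontal bound. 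Adding these gives $\bigl|(z-\tfrac12 xy)-z_0+y(p)(x-x_0)\bigr|<\tfrac12 r^2\le r^2$, which is the second inequality in \eqref{eq:proj-quad}. Since $q\in B(p,r)$ was arbitrary, this proves the lemma.

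There is no real obstacle here: the entire content is the displayed identity, whose verification is a routine (if slightly fiddly) expansion, and the estimate is in fact not sharp, yielding $\tfrac12 r^2$ in place of $r^2$. One could alternatively first use the fact that $\Pi$ commutes with the dilations $s_t$ to normalize $r=1$, but this does not meaningfully shorten the computation, so I would simply carry it out directly.
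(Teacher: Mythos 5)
Your proof is correct: the computation of $p^{-1}q$, the two bounds extracted from the Kor\'anyi norm, the algebraic identity
$\bigl(z-\tfrac12 xy\bigr)-z_0+y(p)(x-x_0)=\bigl(z-c+\tfrac{bx-ay}{2}\bigr)+\tfrac12(b-y)(x-a)$,
and the AM--GM estimate all check out, and they do yield the stated containment (with the slightly sharper constant $\tfrac12 r^2$). Your route is organized differently from the paper's: the paper first treats $p=\mathbf{0}$, where the bound $|z(\Pi(q))|\le r^2$ follows from $\Pi(q)=qY^{-y(q)}\in B(\mathbf{0},2r)\cap V_0$, and then obtains the general case structurally, using $\Pi(pq)=\Pi(p\Pi(q))$ together with the conjugation identity $Y^{y(p)}(x,0,z)Y^{-y(p)}=(x,0,z-xy(p))$, which is where the shear term $y(p)(x-x_0)$ comes from. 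In your version the shear term instead falls out of the explicit coordinate identity, at the cost of a slightly fiddlier expansion but with no normalization or conjugation needed; the paper's version is more conceptual (it isolates the left-invariance and the $\langle Y\rangle$--conjugation as the reason the image is a sheared box), while yours is a self-contained direct estimate and even avoids the factor-of-two loss from the triangle inequality. One cosmetic point: whether $B(p,r)$ is open or closed, your inequalities go through verbatim with $\le$ in place of $<$, so nothing hinges on that choice.
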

\begin{proof}
  We first consider the case $p=\mathbf{0}$. If $q\in B(\mathbf{0},r)$, then $\Pi(q) = qY^{-y(q)} \in B(\mathbf{0},2r)\cap V_0$. In particular, $|z(\Pi(q))|\le r^2$. Since $x(\Pi(q))=x(q)\in [-r,r]$, this implies $V(\mathbf{0},r)\subset R_r$, where $R_r = [-r,r]\times 0 \times [-r^2,r^2]$.
  
  Now consider an arbitrary $p\in \HH$. Then $\Pi(pq) = pqY^{-y(p)-y(q)} = \Pi(p\Pi(q))$ for any $q\in \HH$, so
  $$V(p,r) = \Pi(B(p,r)) = \Pi(p B(\mathbf{0},r)) = \Pi(p V(\mathbf{0},r)) \subset \Pi(p R_r).$$
  We write $p=(x_0,0,z_0)Y^{y(p)}$; then
  $$V(p,r) \subset \Pi(p R_r) = (x_0,0,z_0)Y^{y(p)} R_r Y^{-y(p)}.$$
  By \eqref{eq:heis-mult}, $Y^{y(p)} (x,0,z) Y^{-y(p)} = (x, 0, z - x y(p))$, so this implies \eqref{eq:proj-quad}.
\end{proof}

When $\psi$ is intrinsic Lipschitz, $V(x,r)$ and $\Pi(B(x,r) \cap \Gamma_\psi)$ are comparable, and $\beta_{\Gamma_\psi}$ and $\gamma_\psi$ are comparable.
\begin{lemma}[{\cite[Lemma 2.7]{CLY2}}] \label{lem:V-comparable}
  Let $\psi$ be a $L$-intrinsic Lipschitz function, let $p \in \Gamma_\psi$, and let $r > 0$.  There is a $c > 1$ depending on $L$ such that
  \begin{align}
    V(p,c^{-1}r) \subseteq \Pi(B(p,r) \cap \Gamma_\psi) \subset V(p,r). \label{eq:V-comparable}
  \end{align}
  In particular, if $x,y\in V_0$ and $y\in V(\Psi_\psi(x),r)$, then $x\in V(\Psi_\psi(y),cr)$.
\end{lemma}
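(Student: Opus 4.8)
The plan is to prove the substantive inclusion $V(p,c^{-1}r)\subseteq \Pi(B(p,r)\cap\Gamma_\psi)$ by a direct computation in the Kor\'anyi metric; the other inclusion is immediate, since $\Pi(B(p,r)\cap\Gamma_\psi)\subseteq \Pi(B(p,r))=V(p,r)$ by monotonicity of $\Pi$ applied to subsets. Throughout I will use two bookkeeping facts: $\Gamma_\psi$ meets each coset $w\paramY$ ($w\in V_0$) in exactly the point $\Psi_\psi(w)$, and $\Pi\circ\Psi_\psi=\mathrm{id}_{V_0}$.

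The heart of the matter is the following comparison estimate: if $p\in\Gamma_\psi$, if $q\in\HH$ is arbitrary, and if $p':=\Psi_\psi(\Pi(q))$ denotes the (unique) point of $\Gamma_\psi$ on the coset $q\paramY$, then $d_\Kor(p,p')\le \frac{2}{1-L}\,d_\Kor(p,q)$. To prove it, observe that $p'$ and $q$ lie on a common coset of $\paramY$, so $q^{-1}p'=Y^t$ with $t=y(p')-y(q)$, and since $\|Y^t\|_\Kor=|t|$ we get $d_\Kor(q,p')=|y(p')-y(q)|$. Now estimate $|y(p')-y(q)|\le |y(p')-y(p)|+|y(p)-y(q)|$ using, on the one hand, $|y(p)-y(q)|\le d_\Kor(p,q)$ (because $\|\cdot\|_\Kor$ dominates the absolute value of the additive $y$-coordinate) and, on the other hand, $|y(p')-y(p)|\le L\,d_\Kor(p,p')$ (because $p,p'\in\Gamma_\psi$ and $\Lip(y|_{\Gamma_\psi})\le L$). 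Substituting this into the triangle inequality $d_\Kor(p,p')\le d_\Kor(p,q)+d_\Kor(q,p')$ yields $(1-L)\,d_\Kor(p,p')\le 2\,d_\Kor(p,q)$, which is the claim. (One could instead quote Lemma~\ref{lem:intrinsic-lipschitz-metric}, but the direct argument gives the sharper constant needed below.)

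Taking $c:=\frac{2}{1-L}>1$, the first inclusion follows at once: given $w\in V(p,c^{-1}r)$ write $w=\Pi(q)$ with $d_\Kor(p,q)<c^{-1}r$; then $p'=\Psi_\psi(\Pi(q))\in\Gamma_\psi$ satisfies $d_\Kor(p,p')<r$ and $\Pi(p')=\Pi(q)=w$, so $w\in\Pi(B(p,r)\cap\Gamma_\psi)$. The ``in particular'' assertion is the same estimate read in the other direction: if $y\in V(\Psi_\psi(x),r)$, pick $q\in B(\Psi_\psi(x),r)$ with $\Pi(q)=y$, and apply the comparison estimate with $p=\Psi_\psi(x)$ and $p'=\Psi_\psi(\Pi(q))=\Psi_\psi(y)$ to get $d_\Kor(\Psi_\psi(x),\Psi_\psi(y))<cr$; then $x=\Pi(\Psi_\psi(x))\in\Pi(B(\Psi_\psi(y),cr))=V(\Psi_\psi(y),cr)$.

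I do not expect a genuine difficulty here; the only care needed is in the bookkeeping relating $\Pi$, $\Psi_\psi$, and cosets of $\paramY$ --- in particular the fact that two points on a common $\paramY$-coset differ by a power of $Y$ whose Kor\'anyi norm equals the difference of their $y$-coordinates --- and in matching open versus closed balls, which is harmless and can be absorbed into $c$.
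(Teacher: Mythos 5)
Your argument is correct: the key comparison estimate $d_{\Kor}(p,\Psi_\psi(\Pi(q)))\le \tfrac{2}{1-L}d_{\Kor}(p,q)$ follows exactly as you say from $\Lip(y|_{\Gamma_\psi})\le L$, the additivity of the $y$--coordinate, and $\|Y^t\|_{\Kor}=|t|$, and the two inclusions plus the ``in particular'' statement follow from it with $c=\tfrac{2}{1-L}$. The paper itself gives no proof here --- it simply cites \cite[Lemma 2.7]{CLY2} --- and your self-contained derivation is the standard argument in the same spirit as Lemma~\ref{lem:intrinsic-lipschitz-metric}, so there is nothing substantive to compare or correct.
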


\begin{lemma}\label{lem:parametric-beta}
  Let $0 < L < 1$ and $p \geq 1$. There is a $C:=C(L)>1$ such that for any $L$--intrinsic Lipschitz function  $\psi\from \HH\to \R$, any $x\in \HH$, and any $r>0$,
  \begin{align}
    \gamma_{p,\psi}(x,r/C) \lesssim_{L,p} \beta_{p,\Gamma_\psi}(\Psi_\psi(x),r) \lesssim_{L,p} \gamma_{p,\psi}(x,Cr). \label{eq:parametric-beta}
  \end{align}
\end{lemma}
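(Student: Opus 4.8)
The plan is to prove Lemma~\ref{lem:parametric-beta} by transferring both infima --- the one defining $\beta_{p,\Gamma_\psi}$ over vertical planes, and the one defining $\gamma_{p,\psi}$ over affine functions $h\in\Aff$ --- back and forth, using the fact that vertical planes with finite slope are exactly graphs of functions in $\Aff$, together with the comparability of the domains $B(p,r)\cap\Gamma_\psi$ and $V(\Psi_\psi^{-1}(p),r)$ from Lemma~\ref{lem:V-comparable}. The main geometric input is a comparison, for a point $q\in\Gamma_\psi$ and a vertical plane $L=\Gamma_h$ with $h\in\Aff$, between the Heisenberg distance $d_\Kor(q,L)$ and the vertical (parameter-space) discrepancy $|\psi(\Pi(q)) - h(\Pi(q))|$; I expect these to be comparable up to a constant depending only on $L$, roughly because moving along a coset of $\langle Y\rangle$ is the ``cheapest'' way to travel between two intrinsic graphs and Lemma~\ref{lem:intrinsic-lipschitz-metric} controls $\psi$-values by such distances. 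Granting this pointwise comparison, the change of variables $y = \Psi_\psi(v)$ from $V(\Psi_\psi^{-1}(p),\cdot)\subset V_0$ to $B(p,\cdot)\cap\Gamma_\psi$ has Jacobian bounded above and below by constants depending on $L$ (this is the measure equivalence underlying Lemma~\ref{lem:measure-equiv}), so the $L_p$ integrals defining the two quantities are comparable up to adjusting the radius by the factor $c=c(L)$ of Lemma~\ref{lem:V-comparable} and the normalizing powers of $r$, which match because $\gamma_{p,\psi}$ carries the weight $r^{(-3-p)/p}$ exactly compensating the $r^{-3}$ and the extra $r^{-1}$ inside the $p$-th power in \eqref{eq:def betap}.

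Concretely, for the left inequality $\gamma_{p,\psi}(x,r/C)\lesssim_{L,p}\beta_{p,\Gamma_\psi}(\Psi_\psi(x),r)$, I would start from a near-optimal vertical plane $L$ for $\beta_{p,\Gamma_\psi}(\Psi_\psi(x),r)$. If $L$ has infinite slope the argument is a degenerate special case to be handled separately (one can perturb to large finite slope, or note the intrinsic Lipschitz cone condition forces the relevant part of $\Gamma$ away from such planes); otherwise write $L=\Gamma_h$, $h\in\Aff$, and estimate
\begin{align*}
  (r/C)^{-3-p}\int_{V(\Psi_\psi(x),r/C)} |\psi-h|^p
  &\lesssim_{L} r^{-3-p}\int_{\Pi(B(\Psi_\psi(x),r)\cap\Gamma_\psi)} |\psi-h|^p \\
  &\lesssim_{L} r^{-3-p}\int_{B(\Psi_\psi(x),r)\cap\Gamma_\psi} d(y,L)^p\,\ud\cH^3(y),
\end{align*}
using Lemma~\ref{lem:V-comparable} for the first step (with $C$ absorbing $c(L)$) and the pointwise distance comparison plus Lemma~\ref{lem:measure-equiv} for the second; then take the $p$-th root and the infimum over $L$. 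For the right inequality $\beta_{p,\Gamma_\psi}(\Psi_\psi(x),r)\lesssim_{L,p}\gamma_{p,\psi}(x,Cr)$, I would run the same chain in reverse, starting from a near-optimal affine $h$ for $\gamma_{p,\psi}(x,Cr)$, letting $L=\Gamma_h$, and bounding $\int_{B(\Psi_\psi(x),r)\cap\Gamma_\psi} (d(y,L)/r)^p\,\ud\cH^3(y)$ by $\int_{V(\Psi_\psi(x),r)} |\psi-h|^p$ via the other inclusion in \eqref{eq:V-comparable} and the same pointwise comparison and Jacobian bounds, enlarging the radius from $r$ to $Cr$ as needed.

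The step I expect to be the main obstacle is establishing the two-sided pointwise comparison $d_\Kor(q,L)\approx_L |\psi(\Pi(q))-h(\Pi(q))|$ uniformly over $q\in\Gamma_\psi$ and over vertical planes $L$ (i.e.\ affine $h$), since it must be robust as the slope of $L$ ranges over all of $\R$ and as $q$ ranges over the graph. The upper bound $d_\Kor(q,L)\lesssim |\psi(\Pi(q))-h(\Pi(q))|$ is easy --- the point $q Y^{h(\Pi(q))-y(q)}$ lies on $L$ and differs from $q$ by a $\langle Y\rangle$-translation of size controlled by $\sqrt{|\psi(\Pi(q))-h(\Pi(q))|}$ in Kor\'anyi distance, which is the correct homogeneity once squared --- wait, more carefully the Kor\'anyi norm of $Y^t$ is $|t|$, so this gives the bound directly without a square root; the subtlety is only in reconciling the $L_1$-versus-$L_p$ normalizations, which is why the exponents in \eqref{eq:def-gamma} are arranged as they are. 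The lower bound $|\psi(\Pi(q))-h(\Pi(q))|\lesssim_L d_\Kor(q,L)$ is where the intrinsic Lipschitz hypothesis is essential: one takes the nearest point $q'\in L$ to $q$, applies Lemma~\ref{lem:intrinsic-lipschitz-metric} (to both $\Gamma_\psi$ and, since $0<L<1$ also makes $L$ itself an intrinsic Lipschitz graph, to $\Gamma_h$) to compare $\psi$- and $h$-values along the cosets of $\langle Y\rangle$ through $\Pi(q)$ and $\Pi(q')$, and controls the horizontal displacement $|\Pi(q)-\Pi(q')|$ by $d_\Kor(q,q')=d_\Kor(q,L)$. Once this lemma is in hand the rest is bookkeeping with constants, so I would isolate it as a preliminary sublemma before assembling the two inequalities.
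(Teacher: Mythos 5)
Your transfer scheme (Lemma~\ref{lem:V-comparable} for the domains, Lemma~\ref{lem:measure-equiv} for the measures, and the matching of the $r$--powers) is sound, and your right-hand inequality does go through, because for $q\in\Gamma_\psi$ and $h\in\Aff$ the point $\Pi(q)Y^{h(\Pi(q))}$ lies on $\Gamma_h$, so $d_{\Kor}(q,\Gamma_h)\le|\psi(\Pi(q))-h(\Pi(q))|$ with constant $1$. The genuine gap is in your ``main geometric input'': the reverse pointwise comparison is \emph{not} uniform over vertical planes. For $P=\Gamma_h$ with $h(v)=ax(v)+b$ one computes exactly $d_{\Kor}(q,P)=|y(q)-ax(q)-b|/\sqrt{1+a^2}$ (the infimum over the coset reduces to a Euclidean point--line distance in the $(x,y)$--projection, since the $z$--coordinate can be chosen freely), while $|\psi(\Pi(q))-h(\Pi(q))|=|y(q)-ax(q)-b|$ for $q\in\Gamma_\psi$. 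Hence the ratio is exactly $\sqrt{1+a^2}$, which is unbounded in the slope $a$ of the competing plane and has nothing to do with the intrinsic Lipschitz constant of $\Gamma_\psi$. Your proposed proof of the lower bound also conflates two constants: $\Gamma_h$ is intrinsic Lipschitz with constant $|a|/\sqrt{1+a^2}$, which tends to $1$ as $|a|\to\infty$, so applying Lemma~\ref{lem:intrinsic-lipschitz-metric} to $\Gamma_h$ produces a slope-dependent constant, consistent with the exact computation above.

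Consequently the left inequality $\gamma_{p,\psi}(x,r/C)\lesssim_{L,p}\beta_{p,\Gamma_\psi}(\Psi_\psi(x),r)$ does not follow as written: a near-optimal plane for $\beta$ may have arbitrarily large (or infinite) slope, and you only flag the infinite-slope case as degenerate; ``perturbing to large finite slope'' does not remove the unbounded factor $\sqrt{1+a^2}$. What is missing is a slope dichotomy: show that if $|a|\ge M(L)$ (or the slope is infinite), then, since $\Lip(y|_{\Gamma_\psi})\le L<1$ while $\Gamma_\psi\cap B(\Psi_\psi(x),r)$ contains points whose $x$--coordinates spread over a length $\gtrsim_L r$ (via Lemma~\ref{lem:V-comparable}), a portion of the graph of $\cH^3$--measure $\gtrsim_L r^3$ lies at distance $\gtrsim_L r$ from the plane, so such planes contribute $\beta$--values $\gtrsim_{L,p}1$; on the other hand $\gamma_{p,\psi}(x,\cdot)\lesssim_L 1$ trivially (take $h\equiv\psi(x)$, use Lemma~\ref{lem:intrinsic-lipschitz-metric} and $|V(\cdot,r)|\lesssim r^3$ from Lemma~\ref{lem:proj-quad}). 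One then either restricts the infimum in $\beta$ to slopes $|a|\le M(L)$, where your pointwise comparison holds with constant $\sqrt{1+M(L)^2}$, or concludes the inequality trivially. Without this step the sublemma you isolate is false as stated, so this is a missing piece of the argument rather than bookkeeping.
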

For $p=1$, the previous lemma was essentially proved in \cite[Lemma 4.2]{CLY2}. For $p>1$ the proof is very similar and we omit it.

Then Theorem \ref{thm:beta4} is a special case of the following theorem.
\begin{thm}\label{thm:betaprange}
 Let \(\Gamma\) be an intrinsic \(L\)--Lipschitz graph in $\H$ and let $p \in [1,4]$.  Then for any $y\in \Gamma$ and any $R>0$,
  \begin{equation}
  \label{eq:betaprange}
    \int_{B(y,R) \cap \Gamma} \int_0^R \beta_{p,\Gamma}(x,r)^4\;\frac{\mathrm{d} r}{r}\ud \cH^3(x) \lesssim_{L} R^3.
  \end{equation}
\end{thm}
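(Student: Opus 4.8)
The plan is to reduce \eqref{eq:betaprange} to a purely parametric statement about the intrinsic Lipschitz function $\psi$ parametrizing $\Gamma$, and then to deduce that statement from a foliated corona decomposition together with the $L_4$ approximation bound promised in Section~\ref{sec:lp-approx}. By the monotonicity relation \eqref{eq:betapqrel} and Ahlfors $3$--regularity of intrinsic Lipschitz graphs, it suffices to treat $p=4$: for $p\in[1,4]$ we have $\beta_{p,\Gamma}(x,r)\lesssim_L \beta_{4,\Gamma}(x,r)$, so the $p=4$ case of \eqref{eq:betaprange} implies all the others, and Theorem~\ref{thm:beta4} is the $p=1$ instance. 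By Lemma~\ref{lem:measure-equiv} and Lemma~\ref{lem:parametric-beta}, pushing forward the integral over $\Gamma$ to $V_0$ via $\Pi$ and replacing $\beta_{4,\Gamma}$ by the parametric quantity $\gamma_{4,\psi}$, it is enough to prove a bound of the form
\begin{equation*}
  \int_{V(\Psi_\psi(v_0),R)} \int_0^R \gamma_{4,\psi}(v,r)^4\;\frac{\mathrm{d} r}{r}\ud v \lesssim_L R^3
\end{equation*}
for every $v_0$ and $R>0$; after a dilation and left translation (both of which preserve the class of $L$--intrinsic Lipschitz graphs, the measure $\Omega^P$, and $\gamma_{4,\psi}$ up to scale) we may take $v_0=\zero$ and $R$ comparable to $1$, so the region $V(\Psi_\psi(\zero),R)$ sits inside a fixed $\mu$--rectilinear pseudoquad $Q_0$ (using Lemma~\ref{lem:small-alpha-rectilinear} or a direct construction to arrange rectilinearity).

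Next I would discretize the $r$--integral dyadically and match scales $r\approx \delta_x(Q_w)$ with pseudoquads $Q_w$ in the foliated patchwork $\Delta$ for $Q_0$ furnished by Theorem~\ref{thm:future-use}, with parameters $\mu,\lambda$ to be chosen small. The key geometric point is that for $v\in Q_w$ and $r\approx\delta_x(Q_w)$, the parallelogram $V(\Psi_\psi(v),r)$ is comparable to a bounded dilate of $Q_w$ (using Lemma~\ref{lem:proj-quad}, Lemma~\ref{lem:V-comparable}, and Lemma~\ref{lem:Q-2Q}), so $\gamma_{4,\psi}(v,r)^4$ is controlled by $\delta_x(Q_w)^{-7}\|\psi-h\|_{L_4(CQ_w)}^4$ for the best affine $h$. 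Then I would split $\cV(\Delta)$ into horizontally cut and vertically cut vertices. On horizontally cut $Q_w$, Proposition~\ref{prop:Omega-control}\eqref{it:omega control plane} — improved to an $L_4$ bound in Section~\ref{sec:lp-approx} — gives $\|\psi-l_w\|_{L_4(CQ_w)}^4 \lesssim \lambda^4 |Q_w|\,(\delta_z(Q_w)/\delta_x(Q_w))^4$; dividing by $\delta_x(Q_w)^7$ and summing, each such term contributes $\lesssim \lambda^4\,\alpha(Q_w)^{-4}|Q_w| = \lambda^4 W(\{Q_w\})$, and since along any branch the horizontally cut vertices occur in between vertically cut ones, the total is controlled by the weight of the vertically cut set, hence by the $D$--weighted-Carleson condition \eqref{eq:def weighted Carleson} applied at $v_0$: $\sum_{w} \lambda^4 W(\{Q_w\})\lesssim \lambda^4 D\,|Q_0|\lesssim_L R^3$. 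On vertically cut $Q_w$, the pseudoquad is becoming short and wide, so $\gamma_{4,\psi}(v,r)$ must instead be estimated crudely using that $\psi$ is intrinsic Lipschitz — e.g. the oscillation of $\psi$ across $Q_w$ in the $z$--direction is $\lesssim\sqrt{\delta_z(Q_w)}$ by \eqref{eq:lipschitz-vertical}, and in the $x$--direction the characteristic curve bound \eqref{eq:g-linearize} says $\psi$ is close to affine up to an error $\lesssim \delta_x(Q_w)$ — giving $\gamma_{4,\psi}(v,r)\lesssim 1$, a bound too weak to sum directly. The fix is to use that vertical cuts correspond exactly to scales where $\Gamma$ fails to be paramonotone on $rQ_w$, so $\Omega^P_{\Gamma^+,R\delta_x(Q_w)}(rQ_w)\gtrsim \eta\,\alpha(Q_w)^{-4}|Q_w|$; combined with the kinematic formula \eqref{eq:kinematic} and \eqref{eq:Omega length change}, the number of vertically cut ancestors at each scale is controlled, and one recovers a weighted sum bounded by $|Q_0|$.

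The main obstacle I anticipate is the vertically cut contribution: unlike the horizontally cut case, there is no approximating plane with good $L_4$ control, and the naive $\gamma_{4,\psi}\lesssim 1$ estimate summed over all vertical cuts diverges logarithmically in the number of scales. The resolution has to exploit the structure of the patchwork — that a long run of consecutive vertical cuts along a branch halves $\delta_x$ each time while fixing $\delta_z$, so $\alpha(Q_w)^{-4}$ grows geometrically and the weight $W$ of such a run is dominated by its final (smallest-$\alpha$) term, which is then either a leaf or the parent of a horizontal cut and is thus charged to the Carleson sum; this is precisely the bookkeeping that the ``paramonotone-stopped'' refinement (PSFC) of Section~\ref{sec:coronizations} is designed to make clean, and I would organize the vertical-cut estimate around it. A secondary technical point is to verify that the comparison $V(\Psi_\psi(v),r)\approx Q_w$ holds uniformly with constants depending only on $L$ (and not on the aspect ratio), which requires care with the slope-dependent shearing in Lemma~\ref{lem:proj-quad}; but since the patchwork pseudoquads have bounded slope (controlled by $L$ via the cone condition), this should go through.
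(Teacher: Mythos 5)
Your outer reductions coincide with the paper's: reduce to $p=4$ via \eqref{eq:betapqrel}, pass to the parametric quantity $\gamma_{4,f}$ via Lemmas~\ref{lem:measure-equiv} and \ref{lem:parametric-beta}, and localize to a low-aspect-ratio rectilinear pseudoquad (the paper does this through Lemma~\ref{lem:SAR-conseq} and then invokes Proposition~\ref{prop:Q-carleson}). The genuine gap is in your replacement for that proposition, namely the scale-matching $r\approx\delta_x(Q_w)$. First, the claimed comparability of $V(\Psi_f(v),r)$ with a bounded dilate of $Q_w$ fails whenever $\alpha(Q_w)\gg 1$: by Lemma~\ref{lem:proj-quad}, $V(\Psi_f(v),\delta_x(Q_w))$ has height $\approx\delta_x(Q_w)^2=\alpha(Q_w)^2\delta_z(Q_w)$, and in a PSFC the aspect ratio is bounded \emph{below} but not above (it is multiplied by $\sqrt2$ at every horizontal cut), so the obstruction is the mismatch of heights, not the slope issue you flag at the end. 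Second, and more basically, $\delta_x$ shrinks only at vertical cuts; along branches with long (possibly infinite) runs of horizontal cuts there is no pseudoquad of width $\approx r$ containing a given point once $r$ is small, so your dyadic matching does not exhaust all pairs $(v,r)$. The paper avoids both problems by stratifying by \emph{height}: it sets $S_i=\{w\mid\delta_z(Q_w)\ge 2^{-2i}\}$ as in \eqref{eq:def-Si}, lets $F_i=\min S_i$ (a partition of $Q$ with $\delta_z(Q_w)\approx 2^{-2i}$), and matches the scale $r\approx 2^{-i}\approx\sqrt{\delta_z(Q_w)}$; every $w\in F_i$ is automatically horizontally cut and satisfies $\alpha(Q_w)\ge\frac{\alpha_{\mathrm{min}}}{4}$, hence has width $\gtrsim 2^{-i}$, which is exactly what makes the containment and plane-comparison statements (Lemmas~\ref{lem:V-contained}, \ref{lem:nearby-contain}, \ref{lem:nearby-diff}) applicable.

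The second gap is the vertically cut contribution, which you yourself leave unresolved: the crude bound $\gamma_{4,f}\lesssim 1$ plus the remark that the weight of a vertical run is dominated by its last generation does not close the logarithmic divergence you identify. In the paper's scheme this term never arises: since the minimal elements of $S_i$ are horizontally cut, every pseudoquad entering the estimate carries an approximating plane with the $L_4$ bound of Proposition~\ref{prop:Lp approximating planes} (indeed, by Lemma~\ref{lem:full approximating planes} even vertically cut vertices inherit planes from their minimal horizontally cut ancestors, so your dichotomy is not the right one). Vertically cut vertices enter only through the bookkeeping $\sum_i W(F_i)\le W(\cV(\Delta))\lesssim |Q|$, where the weighted Carleson condition controls the vertical weight and horizontal chains decay geometrically, and through Corollary~\ref{cor:vitali weights} inside the proof of the $L_4$ approximation, where the quantitative failure of paramonotonicity together with \eqref{eq:kinematic} and \eqref{eq:Omega length change} is actually carried out rather than gestured at. Had you simply cited Proposition~\ref{prop:Q-carleson} together with Lemma~\ref{lem:SAR-conseq}, your argument would reduce to the paper's; as written, the core pseudoquad Carleson estimate is not established.
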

We have not considered if the range $[1,4]$ is sharp, and most likely it is not. By Dorronsoro's work \cite{dor} the strong geometric lemma for $m$--dimensional Lipschitz graphs (or Lipschitz functions on $\R^m$) holds for $\beta_p$  with $p<\infty$ in $\R^2$ and for $p<\frac{2m}{m-2}$ in $\R^n, n>2$. Unpublished examples of Fang and Jones show that these ranges are sharp. Recently, F\"assler and Orponen \cite{FO} extended Dorronsoro's techniques to Lipschitz functions from $\H_n$ to $\R$. Their approach suggests that \eqref{eq:betaprange} might hold for $1\le p < 6$, but we will not pursue this here.\footnote{In \cite{CLY1}, we proved Theorem~\ref{thm:beta2 highdim} for $\beta_{2,\Gamma}$, so \eqref{eq:betahighdim} holds when $\beta_{1,\Gamma}$ is replaced by $\beta_{p,\Gamma}$ with $p \in [1,2]$. Likewise, this range is not sharp and for the same reasons it is likely that \eqref{eq:betahighdim} holds for $\beta_{p,\Gamma}$ with $1\le p < \frac{2(2n+1)}{(2n+1)-2} = \frac{4n+2}{2n-1}$.}

In the rest of this section, we will show that Theorem~\ref{thm:betaprange} is a consequence of the following bound.
\begin{prop} \label{prop:Q-carleson}
  Let $0<L<1$. There are $\tau > 0$ and $c>0$ such that if \(\Gamma = \Gamma_f\) is an $L$--intrinsic Lipschitz graph and $Q \subseteq V_0$ be a rectilinear pseudoquad with $\alpha(Q) \le c$, then 
  \begin{align}
    \int_{\frac{1}{3}Q} \int_0^{\tau \delta_x(Q)} \gamma_{4,f}(x,r)^4 \frac{\ud r}{r} \ud x \lesssim_{L} |Q|. \label{eq:Q-carleson}
  \end{align}
\end{prop}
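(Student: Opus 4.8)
The plan is to run the foliated corona decomposition of \(\Gamma\) (Theorem~\ref{thm:future-use}) — or rather the refined version, the paramonotone‑stopped foliated coronization built in Sections~\ref{sec:coronizations}--\ref{sec:lp-approx} — on the pseudoquad \(Q\), with \(\mu=\tfrac1{32}\) and the approximation parameter \(\lambda\) fixed. This produces a rectilinear foliated patchwork \(\Delta\) of \(Q\) which is \(D\)--weighted‑Carleson and carries \(\lambda\)--approximating planes, in which a pseudoquad \(Q_v\) is horizontally cut exactly when \(\Gamma\) is paramonotone on \(rQ_v\), and — this is where the refinement is needed — in which every pseudoquad that is cut further has aspect ratio bounded above (vertical cuts only decrease \(\alpha\) from \(\alpha(Q)\le c\), and horizontal‑cut runs are stopped before \(\alpha\) grows past a fixed threshold). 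The first step is to split \(\tfrac13 Q\times(0,\tau\delta_x(Q))\) according to which pseudoquad of \(\Delta\) is \emph{responsible} for a pair \((x,r)\): I take \(Q_{v(x,r)}\) to be the smallest pseudoquad in the chain of \(\Delta\) above \(x\) for which a fixed dilate of \(Q_{v(x,r)}\) still contains \(V(\Psi_f(x),r)\); equivalently, up to absolute constants, \(\delta_x(Q_{v(x,r)})\ge r\) and \(\delta_z(Q_{v(x,r)})\ge r^2\) while its child fails one of these. Using Lemmas~\ref{lem:proj-quad}--\ref{lem:V-comparable} this bounds the integral in \eqref{eq:Q-carleson} by \(\sum_{v\in\cV(\Delta)}\int_{T_v}\gamma_{4,f}(x,r)^4\,\tfrac{\ud r}{r}\,\ud x\), where the \(T_v\) have bounded overlap, \(T_v\) lives over an \(O(1)\)--dilate of \(Q_v\) and over a range of scales of logarithmic length comparable to \(1\); in particular \(\int_{T_v}\tfrac{\ud r}{r}\,\ud x\lesssim_L|Q_v|\).

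Next I would handle the non‑paramonotone (``bumpy'') pseudoquads, i.e.\ \(v\in\cVv(\Delta)\). Here I use only the crude bound \(\gamma_{4,f}(x,r)^4\lesssim_L 1\), which follows from Lemma~\ref{lem:intrinsic-lipschitz-metric} (on \(V(\Psi_f(x),r)\) the function \(f\) differs from the constant \(f(x)\) by \(O_L(r)\)) together with \(|V(\Psi_f(x),r)|\approx r^3\). The geometric point is that the responsible pseudoquad is never too far from square in this case: since \(Q_{v(x,r)}\) is minimal with \(\delta_x(Q_{v(x,r)})\ge r\), \(\delta_z(Q_{v(x,r)})\ge r^2\) and its child is obtained by a vertical cut, we get \(\delta_x(Q_{v(x,r)})<2r\), hence \(\alpha(Q_{v(x,r)})=\delta_x(Q_{v(x,r)})/\sqrt{\delta_z(Q_{v(x,r)})}<2\). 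Therefore \(\gamma_{4,f}(x,r)^4\lesssim_L 1\lesssim\alpha(Q_{v(x,r)})^{-4}\), and summing over the vertically cut vertices,
\[
\sum_{v\in\cVv(\Delta)}\int_{T_v}\gamma_{4,f}(x,r)^4\,\frac{\ud r}{r}\,\ud x\;\lesssim_L\;\sum_{v\in\cVv(\Delta)}\alpha(Q_v)^{-4}|Q_v|\;=\;W(\cVv(\Delta))\;\le\;D\,|Q|,
\]
which is the weighted Carleson packing condition of Definition~\ref{def:weighted Carleson} applied at the root of \(\Delta\).

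For the paramonotone (``flat'') pseudoquads I would use the main new input, the \(L_4\) approximation bound of Section~\ref{sec:lp-approx}: it upgrades the first part of Proposition~\ref{prop:Omega-control} to \(|Q_v|^{-1}\|f-l_v\|_{L_4(10Q_v)}^4\lesssim_L\lambda(\delta_z(Q_v)/\delta_x(Q_v))^4\) for the \(\lambda\)--approximating plane \(l_v\). In the flat case \(\delta_z(Q_{v(x,r)})\approx r^2\), so inserting \(h=l_{v(x,r)}\) into \eqref{eq:def-gamma} and using \(|Q_{v(x,r)}|\approx\delta_x(Q_{v(x,r)})\delta_z(Q_{v(x,r)})\) gives
\[
\gamma_{4,f}(x,r)^4\;\le\;r^{-7}\|f-l_{v(x,r)}\|_{L_4(10Q_{v(x,r)})}^4\;\lesssim_L\;\lambda\Bigl(\tfrac{r}{\delta_x(Q_{v(x,r)})}\Bigr)^{3}\;\lesssim_L\;\lambda\,\alpha(Q_{v(x,r)})^{-3},
\]
which is \emph{much} smaller than \(\lambda\) when the responsible pseudoquad is wide — and which vanishes for a plane. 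To sum this I group the paramonotone pseudoquads into maximal runs of horizontal cuts: along such a run the width is constant and the aspect ratio increases by \(\sqrt2\) at each step, so the per‑pseudoquad bounds \(\lambda\,\alpha(Q_v)^{-3}|Q_v|\) form a convergent geometric series and the whole run contributes \(\lesssim_L\lambda|Q_{v_0}|\), where \(Q_{v_0}\) is the top of the run (when a run starts with aspect ratio below \(1/\sqrt2\), its first few generations never occur as responsible pseudoquads — since a responsible flat pseudoquad has \(\delta_z<2r^2\) and \(\delta_x\ge r\), hence \(\alpha>1/\sqrt2\) — and this is what makes the series cap at \(\lesssim_L\lambda|Q_{v_0}|\)). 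The top of each run is the root of \(\Delta\) or the child of some \(w\in\cVv(\Delta)\); since the refined decomposition keeps \(\alpha(Q_w)\le\alpha(Q)\le c<1\), we have \(\lambda|Q_{v_0}|\lesssim\lambda\,\alpha(Q_w)^{-4}|Q_w|\), and summing over runs via the weighted Carleson condition the flat contribution is \(\lesssim_L\lambda(1+D)|Q|\). Adding the two contributions and choosing \(c,\tau\) suitably proves \eqref{eq:Q-carleson}. (Equivalently, one may organize the flat case by the terminal paramonotone pieces of the PSFC, which tile \(Q\), and recurse into them; this is the role of the paramonotone‑stopped coronizations of Section~\ref{sec:coronizations}.)

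I expect the main obstacle to be two‑fold. First, establishing the \(L_4\) approximation bound itself: \cite{NY-Foliated} only gives the \(L_1\) bound, and upgrading it genuinely uses paramonotonicity — presumably by combining the uniform \(L_\infty\) control of characteristic curves in Proposition~\ref{prop:Omega-control} with the quantitative \(C^{1,1}\) estimate of Lemma~\ref{lem:intrinsic-lipschitz-cc} (which turns control of \(g\) into control of \(f=-g'\) along characteristic curves) and a maximal‑function argument to pass from pointwise curve estimates to an \(L_4\) bound on \(f-l_v\). Second, the bookkeeping in the flat case: one must build the coronization so that the contributions of \emph{all} the horizontal‑cut runs (equivalently, of all the terminal paramonotone pieces) pack — i.e.\ so that no pseudoquad that is decomposed further ever has aspect ratio exceeding a fixed constant, and so that the weighted Carleson constant is preserved under the refinement — and handling regions of intermediate aspect ratio cleanly is the delicate step that Sections~\ref{sec:coronizations}--\ref{sec:lp-approx} are designed for. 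One should also not forget the routine checks that \(V(\Psi_f(x),r)\) really sits inside an \(O(1)\)--dilate of the responsible pseudoquad (handled by passing to a bounded number of ancestors, or by re‑centring via Lemma~\ref{lem:V-comparable}), that the scales near \(\tau\delta_x(Q)\) are absorbed into the bumpy estimate, and that \(\beta_{4,\Gamma}\) and \(\gamma_{4,f}\) are comparable (Lemma~\ref{lem:parametric-beta}) so that Proposition~\ref{prop:Q-carleson} indeed yields Theorem~\ref{thm:betaprange}.
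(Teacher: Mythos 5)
Your skeleton shares the paper's main ingredients---a paramonotone-stopped coronization of $Q$, the $L_4$ bound of Proposition~\ref{prop:Lp approximating planes}, and the weighted Carleson condition---and your treatment of the vertically cut (``bumpy'') responsible pseudoquads is correct. The flat case, however, has a genuine gap. Your summation rests on the premise that the decomposition can be refined so that every pseudoquad that is cut further---in particular every $w\in\cVv(\Delta)$ sitting just above the top of a horizontal run---has $\alpha(Q_w)\le\alpha(Q)\le c<1$. No such refinement is compatible with the weighted Carleson condition: in a PSFC a pseudoquad is cut horizontally exactly when $\Gamma$ is paramonotone on $rQ_v$, each horizontal cut multiplies the aspect ratio by $\sqrt2$ (Lemma~\ref{lem:soft cut props}), so on flat regions $\alpha$ grows without bound and vertically cut vertices can have arbitrarily large aspect ratio; if instead you insert extra vertical cuts to cap $\alpha$, then already for a vertical plane (keeping $\alpha\approx 1$ by alternating cuts) every level of the tree carries weight $\approx|Q|$, so $W(\cVv(\Delta))=\infty$ and the Carleson property is lost. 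Without the cap, your run-by-run estimate requires a packing condition of the form $\sum_{w\in\cVv(\Delta)}\min\{1,\alpha(Q_w)\}^{-3}|Q_w|\lesssim|Q|$ (with your cruder cap of $\lambda|Q_{v_0}|$ per run, even $\sum_{w\in\cVv(\Delta)}|Q_w|\lesssim|Q|$), and neither follows from the available $\alpha^{-4}$-weighted condition; in view of Theorem~\ref{thm:beta-theorem}, such stronger packing conditions should not be expected to hold. The loss of exactly one power of $\alpha$ is created when you bound $\|f-l_v\|_{L_4(V(\Psi_f(x),r))}$ by $\|f-l_v\|_{L_4(10Q_v)}$ for \emph{every} $x$ and then integrate over $x\in Q_v$: this overcounts by $|Q_v|/r^3\approx\alpha(Q_v)$. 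The paper avoids the loss by a Fubini/averaging step (each point of $Q$ lies in only $\approx r^3$ worth of windows $V(\Psi_f(x),r)$), which converts the $L_4$ estimate into $\int_{\frac13 Q}\gamma_{4,f}(x,\nu 2^{-i})^4\ud x\lesssim W(F_i)$ with the exponent $-4$, exactly what the weighted Carleson condition can absorb (Lemma~\ref{lem:gamma-Si-weight} and the final proof). That averaging, not an aspect-ratio cap, is the missing idea.

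A second, smaller gap: Proposition~\ref{prop:Lp approximating planes} controls $\|f-l_v\|_{L_4(Q_v)}$ only, while your window is contained only in an $O(1)$--dilate of the responsible pseudoquad and, for $x$ near a cut, genuinely straddles adjacent pseudoquads of comparable height. Your proposed fixes do not repair this: for $x$ near a high-level cut (say the vertical bisector of $Q$) no bounded number of ancestors fully contains $V(\Psi_f(x),r)$, and passing to the smallest fully containing ancestor destroys the scale-$r$ estimate, while recentering via Lemma~\ref{lem:V-comparable} does not change which pseudoquads the window meets. The paper deals with this by comparing $f$ on the window with the piecewise-affine function $g_{S_i}$ built from the height-$\approx r^2$ partition $F_i$, and proving that the affine pieces of neighboring pseudoquads differ by $O(\delta_z/\delta_x)$ in $L_\infty$ (Lemmas~\ref{lem:nearby-contain} and~\ref{lem:nearby-diff}, which rest on Lemma~\ref{lem:two-close-planes} and the characteristic-curve control of Proposition~\ref{prop:Omega-control}); this neighbor comparison is a necessary piece of the argument, not a routine check.
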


To show that Proposition~\ref{prop:Q-carleson} implies Theorem~\ref{thm:betaprange}, we will need the following lemma.
\begin{lemma} \label{lem:SAR-conseq}
   Let \(0 < \kappa < 1\) and \(0<L<1\). There is a $c>1$ depending on $\kappa$ and $L$ such that for any $L$--intrinsic Lipschitz graph $\Gamma=\Gamma_{\psi}$, any $x\in \Gamma$, and any $r>0$, there is a rectilinear pseudoquad $Q$ such that $V(x,r)\subset \kappa Q$, $\delta_x(Q) = 2\kappa^{-1} r$, and $\delta_z(Q) \le c r^2$.
\end{lemma}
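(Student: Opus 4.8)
The plan is to build $Q$ explicitly: take $Q$ to be the pseudoquad cut out by two characteristic curves of $\Gamma$ that straddle, with room to spare, the parallelogram containing $V(x,r)$, and pair it with a parabolic rectangle $R$ whose midline is the midline of that parallelogram. Write $(x_0,0,z_0):=\Pi(x)$, put $t:=y(x)=\psi(x_0,0,z_0)$ (the last equality holds since $\psi$ is constant on cosets of $\paramY$ and $x\in\Gamma_\psi$), and set $m(x'):=z_0-t(x'-x_0)$. By Lemma~\ref{lem:proj-quad},
\begin{equation}\label{eq:SAR-par}
  V(x,r)\subseteq \{(x',0,z')\in V_0 \mid |x'-x_0|\le r,\ |z'-m(x')|\le r^2\}.
\end{equation}
Fix a constant $C=C(\kappa,L)\ge\kappa^{-2}$, to be enlarged finitely often below, set $a:=Cr^2$, and let $I:=[x_0-\kappa^{-1}r,\,x_0+\kappa^{-1}r]$, so that $\kappa I=[x_0-r,x_0+r]$. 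Let $g_1,g_2\colon\R\to\R$ have characteristic graphs for $\Gamma$ with $g_1(x_0)=z_0-a$ and $g_2(x_0)=z_0+a$, let $Q$ be the pseudoquad bounded by $g_1,g_2$ over $I$, and let $R$ be the parabolic rectangle over $I$ bounded by the parallel straight lines $h_1:=m-a$ and $h_2:=m+a$ — these are the projections of horizontal curves of the slope-$0$ vertical plane $\{y=t\}$, so $R$ is a genuine parabolic rectangle with midline $m$, $\delta_x(R)=2\kappa^{-1}r$, and $\delta_z(R)=2a$. I claim $(Q,R)$ works.

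The main step is to check $(Q,R)$ is rectilinear, i.e.\ \eqref{eq:def rectilinear 1 and 2}. Since $g_i(x_0)=h_i(x_0)$, $h_i'\equiv -t$, and $g_i'(x_0)=-\psi(x_0,0,g_i(x_0))$ by \eqref{eq:cc-pde}, for $x'\in 4I$ the difference $g_i(x')-h_i(x')$ equals $\bigl(t-\psi(x_0,0,g_i(x_0))\bigr)(x'-x_0)$ plus the curvature term $g_i(x')-g_i(x_0)-g_i'(x_0)(x'-x_0)$. Bounding the first summand by \eqref{eq:lipschitz-vertical} applied at $(x_0,0,z_0)$ with vertical displacement $\pm a$, and the second by \eqref{eq:g-linearize}, and using $|x'-x_0|\le 4\kappa^{-1}r$, one gets $\|g_i-h_i\|_{L_\infty(4I)}\lesssim_L \sqrt{C}\,\kappa^{-1}r^2+\kappa^{-2}r^2$ (recall $\sqrt a=\sqrt C\,r$). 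Since $\tfrac1{32}\delta_z(R)=\tfrac1{16}Cr^2$ grows linearly in $C$ while the preceding bound grows only like $\sqrt C$, taking $C=C(\kappa,L)$ large enough gives $\|g_i-h_i\|_{L_\infty(4I)}\le\tfrac1{32}\delta_z(R)$, which is \eqref{eq:def rectilinear 1 and 2}; the same bound forces $g_2-g_1\ge 2a-\tfrac1{16}\delta_z(R)>0$ on $4I$, so $Q$ is a genuine pseudoquad and $(Q,R)$ is a rectilinear pseudoquad. Consequently $\delta_x(Q)=\delta_x(R)=2\kappa^{-1}r$ and $\delta_z(Q)=\delta_z(R)=2Cr^2\le cr^2$ with $c:=2C>1$, which are the second and third assertions of the lemma.

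Finally, $V(x,r)\subseteq\kappa Q$: by the centered description of $\rho R$ recalled in Section~\ref{sec:prelim}, $\kappa Q=\kappa R=\{(x',0,z')\mid x'\in\kappa I,\ |z'-m(x')|\le\tfrac{\kappa^2\delta_z(R)}2=\kappa^2Cr^2\}$, while the set on the right of \eqref{eq:SAR-par} equals $\{(x',0,z')\mid x'\in\kappa I,\ |z'-m(x')|\le r^2\}$; since $\kappa^2C\ge1$ by the choice of $C$, that set is contained in $\kappa R$, so \eqref{eq:SAR-par} yields $V(x,r)\subseteq\kappa Q$. The only delicate point is the bookkeeping in the previous paragraph: $C$ must be taken large enough, in terms of $\kappa$ and $L$ only, both to absorb the first-order slope mismatch $\asymp\sqrt C\,\kappa^{-1}r^2$ and the curvature term $\asymp\kappa^{-2}r^2$ from \eqref{eq:g-linearize}--\eqref{eq:lipschitz-vertical} into the rectilinearity budget $\tfrac1{32}\delta_z(R)\asymp Cr^2$, and to guarantee $\kappa^2C\ge1$ — both are possible precisely because that budget is linear in $C$ while the obstructions are $O(\sqrt C)$ and $O(1)$. (It is tempting to produce $R$ instead by invoking Lemma~\ref{lem:small-alpha-rectilinear} once $Q$ is tall and skinny enough, but then the midline of $R$ is uncontrolled and the final inclusion $V(x,r)\subseteq\kappa Q$ becomes too lossy, since $\tfrac1{32}\delta_z(R)\gg r^2$; this is why we construct $R$ by hand with midline $m$.)
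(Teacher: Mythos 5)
Your proof is correct and follows essentially the same route as the paper: build $Q$ between two characteristic curves through points at vertical distance $\pm(\text{const})\,r^2$ from $\Pi(x)$, pair it with an explicitly chosen flat parabolic rectangle, verify rectilinearity via \eqref{eq:lipschitz-vertical} and \eqref{eq:g-linearize} by making the height a large enough ($\kappa,L$-dependent) multiple of $r^2$, and get the inclusion from Lemma~\ref{lem:proj-quad}. The only difference is cosmetic: the paper first left-translates so that $x=\mathbf{0}$ (making the midline horizontal and phrasing the choice as a small aspect ratio $a$), whereas you keep a general basepoint and tilt the midline by $-y(x)$, with your parameter $C$ playing the role of the paper's $2\kappa^{-2}a^{-2}$.
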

\begin{proof}
  After a left-translation, we may suppose that $x=\mathbf{0}$. Let 
  $a = a(L) \in (0,1)$ be a small number to be chosen later and let $s=2 \kappa^{-2} a^{-2} r^2$. 
  Let $g_1,g_2\from \R\to \R$ be functions with characteristic graphs such that $g_1(0)=-s$ and $g_2(0)=s$, let 
  $$Q=\{(x,0,z) : |x|\le \kappa^{-1} r \text{ and } z\in [g_1(x),g_2(x)]\},$$
  and let $R=[-\kappa^{-1} r, \kappa^{-1} r] \times [-s,s]$ so that $\alpha(R) = \frac{2\kappa^{-1} r}{\sqrt{2s}} = a$. We claim that $(Q,R)$ is rectilinear and that $V(x,r)\subset Q$.
  
  By \eqref{eq:cc-pde} and Lemma~\ref{lem:intrinsic-lipschitz-metric}, 
  $$|g_i'(0)| = |\psi(0,0,g_i(0))| \le \frac{4}{1-L} \sqrt{s}\le \frac{8r\kappa^{-1} a^{-1}}{1-L}.$$
  By Lemma~\ref{lem:intrinsic-lipschitz-cc}, for $t\in [-4\kappa^{-1} r, 4\kappa^{-1} r]$, 
  $$\frac{|g_i(t)-g_i(0)|}{s} \le \frac{1}{2\kappa^{-2}a^{-2}r^2}\left(|g_i'(0)| 4\kappa^{-1} r + \frac{16L\kappa^{-2}}{\sqrt{1-L^2}} r^2\right)
     \le \frac{16a}{1-L} + \frac{8L a^2}{\sqrt{1-L^2}}.$$
  We choose $a(L)\in (0,1)$ small enough that
  $$|g_i(t)-g_i(0)| \le \frac{s}{16} = \frac{1}{32} \delta_z(R),$$
  so $(Q,R)$ is rectilinear.
  By Lemma~\ref{lem:proj-quad}, $V(x,r) \subset [-r,r]\times [-r^2,r^2]$, so
  $$V(x,r) \subset [-r,r]\times [-2a^{-2}r^2, -2a^{-2}r^2] = \kappa Q,$$
  and $\delta_z(Q) = 4 \kappa^{-2} a^{-2} r^2\lesssim_{\kappa,L} r^2$.
\end{proof}

This lets us prove Theorem~\ref{thm:betaprange}, assuming Proposition~\ref{prop:Q-carleson}.
\begin{proof}[Proof of Theorem \ref{thm:betaprange}]
  Let $\Gamma = \Gamma_f$ be a $L$-intrinsic Lipschitz graph, $y \in \Gamma$, and $R > 0$. We first prove that
  \begin{align}
    \int_{V(y,R)} \int_0^R \gamma_{4,f}(x,r)^4 \frac{\ud r}{r} \ud x \lesssim_{L} R^3. \label{eq:gamma-carleson}
  \end{align}

  Let $c, \tau > 0$ be the constants from Proposition~\ref{prop:Q-carleson}. By Lemma~\ref{lem:SAR-conseq}, there is a rectilinear pseudoquad $Q$ with $\delta_x(Q) \lesssim_L R$ and $\delta_z(Q) \lesssim_L R^2$ such that $V(y,R) \subset \frac{1}{3}Q$ and $\tau\delta_x(Q) \geq R$. Then
  \begin{align*}
    \int_{V(y,R)} \int_0^R \gamma_{4,f}(x,r)^4 \frac{\ud r}{r} \ud x & \le \int_{\frac{1}{3} Q} \int_0^{\tau \delta_x(Q)} \gamma_{4,f}(x,r)^4 \frac{\ud r}{r} \ud x \\
    &\overset{\eqref{eq:Q-carleson}}{\lesssim_{L}} |Q| \approx_L R^3.
  \end{align*}

  Let $C = C(L) > 1$ be the constant from Lemma \ref{lem:parametric-beta}. Lemma \ref{lem:measure-equiv} tells us that $(\Psi_f)_*(|\cdot|) \approx_L \cH^3$ on $\Gamma$, so 
  \begin{align*}
    \int_{B(y,R) \cap \Gamma} &\int_0^R \beta_{4,\Gamma}(x,r)^4 \frac{\ud r}{r} \ud \cH^3(x) \\
    & \lesssim_L \int_{\Pi(B(y,R) \cap \Gamma)} \int_0^R \beta_{4,\Gamma}(\Psi_f(x),r)^4 \frac{\ud r}{r} \ud x \\
    &\overset{\eqref{eq:parametric-beta}}{\lesssim_L} \int_{V(y,R)} \int_0^R \gamma_{4,f}(x,Cr)^4 \frac{\ud r}{r} \ud x \\
    &\leq \int_{V(y,CR)} \int_0^{CR} \gamma_{4,f}(x,r)^4 \frac{\ud r}{r} \ud x \\
    &\overset{\eqref{eq:gamma-carleson}}{\lesssim_L} R^3.
  \end{align*}
  By \eqref{eq:betapqrel}, this implies the theorem for $p\in [1,4]$.
  % As the conclusion of the theorem is invariant under translation and rescaling, we may assume $B(z,R) = B(\zero,1)$.

  % There then exists some universal constant $M \geq 1$ so that $\Pi(B(\zero,1)) \subset [-1,1] \times \{0\} \times [-M,M]$.  Let us choose $A = \max\{3,\eta^{-1}\}$.
  % One has that
  % \begin{align*}
  %   |f(Z^t)| \leq |f(\zero)| + \frac{2}{1-L} d(\zero, Z^t) = \frac{2\sqrt{|t|}}{1-L}.
  % \end{align*}
  % For each $t \in \R$, let $g_t : \R \to \R$ be the function for which $g_t(0) = t$ and whose graph is a characteristic curve of $\Gamma$.  Then
  % \begin{align*}
  %   \max_{x \in [-A,A]} |g_t(x) - t| \leq |g_t(\zero)|A + \frac{LA^2}{2\sqrt{1-L^2}} \lesssim_L 1 + \sqrt{t}.
  % \end{align*}
  % As this grows sublinearly, there exists some $B = B(L) > \max\{2A^2\alpha_0^{-2}, 9M\}$ so that the pseudoquad $Q$ bound by the lines $x = \pm A$ and $z = g_{\pm B}(x)$ is $\frac{1}{32}$-rectilinear.
  % As $B \geq 10M$, one gets that $[-1,1] \times \{0\} \times [-M,M] \subset \frac{1}{3} Q$.  As
  % $$\delta_z(Q) = 2B > 4A^2 \alpha_0^{-2} = \delta_x(Q)^2 \alpha_0^{-2},$$
  % we have that $\alpha(Q) < \alpha_0$.  Finally, note that
  % \begin{align*}
  %   \eta \delta_x(Q) = 2\eta A \geq 1.
  % \end{align*}
  % As $|Q| \lesssim AB \lesssim_L 1$, applying Proposition \ref{prop:Q-carleson} to $Q$ then gives \eqref{eq:parametric-carleson}.
\end{proof}

\section{Foliated coronizations}\label{sec:coronizations}

In this section, we will use foliated corona decompositions to define \emph{foliated coronizations}, which are rectilinear foliated patchworks with some improved properties that make them easier to use for the arguments in this paper. 

\begin{defn}\label{def:coronization}
  Let $\lambda>0$, let $r>10$ be the universal constant in Proposition~\ref{prop:Omega-control}, and let $0<\mu\le\frac{1}{32r^2}$. If $\Delta$ is a $\mu$--rectilinear foliated patchwork with horizontally cut root that is $D_0$--weighted-Carleson and has a set of $\lambda$--approximating planes, we call $\Delta$ a \emph{foliated coronization}.  When the constants are important, we will write that $\Delta$ is a $(D_0, \mu, \lambda)$--foliated coronization.
  Furthermore, if there are $\eta, R>0$ that satisfy Proposition~\ref{prop:Omega-control} for $\zeta=\frac{1}{32r^2}$ and such that for all $v\in \cV(\Delta)$, $Q_v$ is horizontally cut if and only if $\Gamma$ is $(\eta,R)$--paramonotone on $rQ_v$, then we say that $\Delta$ is \emph{$(\eta,R)$--paramonotone stopped}. We will abbreviate paramonotone stopped foliated coronization as PSFC.
\end{defn}

We can construct such coronizations using the following lemma.
\begin{lemma}\label{lem:low aspect pseudoquads}
  Let $0<L<1$ and let $\eta,r,R,\mu>0$. There is an $\alpha_{\mathrm{min}}\in(0,1)$ such that for any $L$--intrinsic Lipschitz graph $\Gamma$, if $Q$ is a rectilinear pseudoquad for $\Gamma$ and $\alpha(Q)\le \alpha_{\mathrm{min}}$, then $Q$ is $\mu$--rectilinear and $\Gamma$ is $(\eta,R)$--paramonotone on $rQ$.
\end{lemma}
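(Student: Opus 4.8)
The plan is to pick \(\alpha_{\mathrm{min}}\) small enough that the two conclusions can be checked independently: \(\mu\)--rectilinearity will come directly from Lemma~\ref{lem:small-alpha-rectilinear}, and \((\eta,R)\)--paramonotonicity on \(rQ\) will come from combining the kinematic formula \eqref{eq:kinematic} with the rescaling bound \eqref{eq:Omega length change}. Fix a rectilinear pseudoquad \(Q\) for \(\Gamma\), with associated parabolic rectangle \(R_0\), base \(I\), lower and upper bounds \(g_1,g_2\) of \(Q\), and bounds \(h_1,h_2\) of \(R_0\) (so \(h_2-h_1\equiv\delta_z(Q)\) and \(\|g_i-h_i\|_{L_\infty(4I)}\le\tfrac1{32}\delta_z(Q)\) by \eqref{eq:def rectilinear 1 and 2}).

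For \(\mu\)--rectilinearity I would let \(A=A(\mu,L)>0\) be the constant of Lemma~\ref{lem:small-alpha-rectilinear}, choose the midpoint \(x_0\) of \(I\), and put \(v=(x_0,0,g_1(x_0))\) (a point on the lower boundary of \(Q\)) and \(s=g_2(x_0)-g_1(x_0)>0\), so that \(vZ^s=(x_0,0,g_2(x_0))\) lies on the upper boundary. From \(h_2-h_1\equiv\delta_z(Q)\) and the \(\tfrac1{32}\)--rectilinearity bound, \(s\ge\tfrac{15}{16}\delta_z(Q)\), hence \(\delta_x(Q)/\sqrt s\le\sqrt{16/15}\,\alpha(Q)\). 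Thus, as soon as \(\alpha_{\mathrm{min}}\le\sqrt{15/16}\,A\), the hypothesis \(\delta_x(Q)/\sqrt s\le A\) of Lemma~\ref{lem:small-alpha-rectilinear} is met and it produces a parabolic rectangle witnessing \(\mu\)--rectilinearity of \(Q\).

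For paramonotonicity I would first extract a scale--free density bound from \eqref{eq:kinematic}: each term in that sum is the measure \(\Omega^P_{\Gamma^+,2^{-i}}\) of a fixed set, hence nonnegative, so \(\Omega^P_{\Gamma^+,2^{-i}}(U)\lesssim_L|U|\) for every \(i\) and every measurable \(U\subset V_0\); then \eqref{eq:Omega length change} bridges consecutive dyadic scales at the cost of a factor \(<2\), giving \(\Omega^P_{\Gamma^+,\rho}(U)\le C_0(L)\,|U|\) for all \(\rho>0\). Applying this with \(\rho=R\delta_x(Q)\) and \(U=rQ\), and using that \(|rQ|=r^3\,\delta_x(Q)\delta_z(Q)\le\tfrac54 r^3|Q|\) by the definition of \(rQ\) and \eqref{eq:soft cuts} with \(\epsilon=\tfrac14\), yields \(\Omega^P_{\Gamma^+,R\delta_x(Q)}(rQ)\,/\,|Q|\le\tfrac54 C_0(L)\,r^3\). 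Since \(\alpha(Q)\le\alpha_{\mathrm{min}}<1\) forces \(\alpha(Q)^{-4}\ge\alpha_{\mathrm{min}}^{-4}\), the defining inequality \eqref{eq:def-paramonotone} holds on \(rQ\) whenever \(\tfrac54 C_0(L)\,r^3\le\eta\,\alpha_{\mathrm{min}}^{-4}\).

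Finally I would set \(\alpha_{\mathrm{min}}:=\min\bigl\{\tfrac12,\ \sqrt{15/16}\,A(\mu,L),\ \bigl(4\eta/(5C_0(L)r^3)\bigr)^{1/4}\bigr\}\), which lies in \((0,1)\) and depends only on \(L,\eta,r,\mu\) (in fact not on \(R\)), and conclude that both assertions hold whenever \(\alpha(Q)\le\alpha_{\mathrm{min}}\). I expect the only step requiring any thought to be the passage from the dyadic kinematic formula to the uniform--in--\(\rho\) bound \(\Omega^P_{\Gamma^+,\rho}(U)\lesssim_L|U|\); everything else is elementary estimation and bookkeeping.
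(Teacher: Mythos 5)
Your proposal is correct and follows essentially the same route as the paper: $\mu$--rectilinearity via Lemma~\ref{lem:small-alpha-rectilinear} (you verify its hypothesis with the explicit bound $s\ge\tfrac{15}{16}\delta_z(Q)$, where the paper simply uses the threshold $\alpha(Q)\le\tfrac{A}{2}$), and paramonotonicity by extracting a per-scale bound from the kinematic formula \eqref{eq:kinematic}, bridging to the scale $R\delta_x(Q)$ via \eqref{eq:Omega length change} at the cost of a factor $2$, bounding $|rQ|\lesssim r^3|Q|$ by Lemma~\ref{lem:soft cut props}, and then choosing $\alpha_{\mathrm{min}}$ small enough that $\eta\alpha(Q)^{-4}$ dominates. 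The constants differ slightly but the argument is the same, including the observation that $\alpha_{\mathrm{min}}$ need not depend on $R$.
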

\begin{proof}
  Let $A$ be as in Lemma~\ref{lem:small-alpha-rectilinear}, so that if $\alpha(Q)\le \frac{A}{2}$, then $Q$ is $\mu$--rectilinear.
  
  Let $i>0$ be such that $2^{i-1}\le R\delta_x(Q)<2^i$.  By \eqref{eq:kinematic} and \eqref{eq:Omega length change}, there is a $b>0$ such that
  $$\Omega^P_{\Gamma^+,R\delta_x(Q)}(rQ) \le 2 \Omega^P_{\Gamma^+,2^i}(rQ) \le b |rQ|.$$
  Let
  $$\alpha_{\mathrm{min}}=\min\{\frac{A}{2}, (4 b r^3\eta^{-1})^{-\frac{1}{4}}\}.$$

  By Lemma~\ref{lem:soft cut props}, $|rQ|\le 2 r^3 |Q|$, so if $\alpha(Q)>\alpha_{\mathrm{min}}$, then 
  $$\Omega^P_{\Gamma^+,R\delta_x(Q)}(rQ) \le 2 b r^3 |Q| \le \eta \alpha(Q)^{-4} |Q|,$$
  and $\Gamma$ is $(\eta,R)$--paramonotone on $rQ$.
\end{proof}

Combining Lemma~\ref{lem:low aspect pseudoquads} and Theorem~\ref{thm:future-use} yields the following.
\begin{lemma}\label{lem:coronizations-exist}
  Let $0<L<1$. There is an $\alpha_{\mathrm{min}}\in(0,1)$ such that for any intrinsic $L$--Lipschitz graph $\Gamma$, and any rectilinear pseudoquad $Q\subset V_0$ with $\alpha(Q) \le \alpha_{\mathrm{min}}$, there is a PSFC with root $Q$.

  Specifically, let $r$ be as in Proposition~\ref{prop:Omega-control}.
  Let $\lambda >0$, and let $0<\mu\le\frac{1}{32r^2}$. Let $D=D(L,\mu,\lambda)$, $\eta=\eta(\mu,\lambda)$, and $R=R(\mu,\lambda)$ be as in Theorem~\ref{thm:future-use}. Let $\alpha_{\mathrm{min}}=\alpha_{\mathrm{min}}(\eta, \mu, r, R, L)$ be as in Lemma~\ref{lem:low aspect pseudoquads}. Then $\eta$ and $R$ satisfy Proposition~\ref{prop:Omega-control} for $\zeta=\frac{1}{32r^2}$. Furthermore, for any intrinsic $L$--Lipschitz graph $\Gamma$, and any rectilinear pseudoquad $Q\subset V_0$ with $\alpha(Q) \le \alpha_{\mathrm{min}}$, there is a $(D,\mu, \lambda)$--foliated coronization $\Delta$ of $Q$ which is $(\eta,R,r)$--paramonotone stopped.
\end{lemma}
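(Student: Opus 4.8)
The plan is to prove the ``Specifically'' part, which immediately implies the first assertion (just fix, say, $\mu=\tfrac{1}{32r^2}$ and $\lambda=1$). The argument is essentially bookkeeping: run Theorem~\ref{thm:future-use} and Lemma~\ref{lem:low aspect pseudoquads} in the correct order and then check that the resulting foliated patchwork meets every clause of Definition~\ref{def:coronization}. So first I would fix $\lambda>0$ and $0<\mu\le\tfrac{1}{32r^2}$ and apply Theorem~\ref{thm:future-use} with these parameters and with the given $L$, obtaining the constants $D=D(L,\mu,\lambda)$, $\eta=\eta(\mu,\lambda)$, and $R=R(\mu,\lambda)$. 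As recorded in the parenthetical remark after Theorem~\ref{thm:future-use}, this choice of $\eta$ and $R$ satisfies Proposition~\ref{prop:Omega-control} for $\zeta=\tfrac{1}{32r^2}$, which is exactly the value demanded in Definition~\ref{def:coronization}; this is the first of the two things we must verify.

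Next I would feed these $\eta$, the universal $r$ from Proposition~\ref{prop:Omega-control}, $R$, and $\mu$ into Lemma~\ref{lem:low aspect pseudoquads}, which produces the threshold $\alpha_{\mathrm{min}}=\alpha_{\mathrm{min}}(\eta,\mu,r,R,L)\in(0,1)$. Now take any $L$--intrinsic Lipschitz graph $\Gamma$ and any rectilinear (that is, $\tfrac{1}{32}$--rectilinear) pseudoquad $Q\subset V_0$ with $\alpha(Q)\le\alpha_{\mathrm{min}}$. By Lemma~\ref{lem:low aspect pseudoquads}, the small aspect ratio upgrades $Q$ to a $\mu$--rectilinear pseudoquad and moreover guarantees that $\Gamma$ is $(\eta,R)$--paramonotone on $rQ$.

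Since $Q$ is now $\mu$--rectilinear, Theorem~\ref{thm:future-use} applies directly to $Q$ and yields a $\mu$--rectilinear foliated patchwork $\Delta$ for $Q$ that is $D$--weighted-Carleson, carries a set of $\lambda$--approximating planes, and has the property that for every $v\in\cV(\Delta)$ the pseudoquad $Q_v$ is horizontally cut if and only if $\Gamma$ is $(\eta,R)$--paramonotone on $rQ_v$. Applying this property to the root $v_0$, for which $Q_{v_0}=Q$, and combining it with the paramonotonicity of $\Gamma$ on $rQ$ established above, we conclude that the root of $\Delta$ is horizontally cut. Hence $\Delta$ is a $\mu$--rectilinear foliated patchwork with $\mu\le\tfrac{1}{32r^2}$ and horizontally cut root, is $D$--weighted-Carleson, and has $\lambda$--approximating planes, so it is a $(D,\mu,\lambda)$--foliated coronization; since $\eta,R$ satisfy Proposition~\ref{prop:Omega-control} for $\zeta=\tfrac{1}{32r^2}$ and the horizontally-cut-iff-paramonotone condition holds at every $v$, it is $(\eta,R)$--paramonotone stopped, i.e.\ a PSFC with root $Q$.

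The proof involves no real computation; the only points that require attention are the order of quantifiers --- the constants must be pinned down as $\eta,R$ (from Theorem~\ref{thm:future-use}) and then $\alpha_{\mathrm{min}}$ (from Lemma~\ref{lem:low aspect pseudoquads}) \emph{before} $\Gamma$ and $Q$ are chosen --- and the observation that the small-aspect-ratio hypothesis is precisely what forces the root to be horizontally cut, which is exactly the extra requirement Definition~\ref{def:coronization} imposes on a foliated coronization beyond a generic foliated corona decomposition. I expect the matching of $\zeta=\tfrac{1}{32r^2}$ across Theorem~\ref{thm:future-use}, Lemma~\ref{lem:low aspect pseudoquads}, and Definition~\ref{def:coronization} to be the one place where one must be careful not to lose track of which parameter governs which step.
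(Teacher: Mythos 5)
Your proposal is correct and follows essentially the same route as the paper: invoke Lemma~\ref{lem:low aspect pseudoquads} to upgrade $Q$ to $\mu$--rectilinearity and get $(\eta,R)$--paramonotonicity on $rQ$, then apply Theorem~\ref{thm:future-use} and use the paramonotone-stopped property to conclude the root is horizontally cut, so $\Delta$ is a PSFC. Your extra care about the order in which the constants $\eta$, $R$, $\alpha_{\mathrm{min}}$ are fixed and the matching of $\zeta=\tfrac{1}{32r^2}$ is exactly the bookkeeping the paper's (terser) proof relies on.
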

\begin{proof}
  Let $Q\subset V_0$ be as above. By Lemma~\ref{lem:low aspect pseudoquads}, $\Gamma$ is $(\eta,R)$--paramonotone on $rQ$, and by Theorem~\ref{thm:future-use}, there is a $\mu$--rectilinear foliated patchwork $\Delta$ for $Q$ that is $D$--weighted-Carleson, has a set of $\lambda$--approximating planes, and is $(\eta,R,r)$--paramonotone stopped. In particular, $Q$ is horizontally cut, so $\Delta$ is a PSFC as desired.
\end{proof}

These decompositions satisfy several nice properties. First, the pseudoquads of a PSFC have aspect ratios that are bounded below.
\begin{lemma}\label{lem:aspect-bounds}
  Let $0<L<1$, let $\Gamma$ be an intrinsic $L$--Lipschitz graph $\Gamma$, and let $Q\subset V_0$ be a pseudoquad for $\Gamma$. Let $\Delta$ be a PSFC for $Q$ and let $\alpha_{\mathrm{min}}$ be as in Lemma~\ref{lem:low aspect pseudoquads}. Then:
  \begin{itemize}
  \item for all $v\in \cV(\Delta)$, $\alpha(Q_v) \ge \min\{\alpha(Q), \frac{\alpha_{\mathrm{min}}}{4}\}$,
  \item there are only finitely many $v\in \cV(\Delta)$ such that $\alpha(Q_v)<\frac{\alpha_{\mathrm{min}}}{4}$, and
  \item if $\delta_z(Q_v) \le \frac{16 \delta_x(Q)^2}{\alpha_{\mathrm{min}}^2}$, then $\alpha(Q_v) \ge \frac{\alpha_{\mathrm{min}}}{4}$.
  \end{itemize}
\end{lemma}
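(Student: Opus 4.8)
The plan is to reduce all three bullets to one structural dichotomy about PSFCs. First I would record that, since $\Delta$ is a PSFC, a vertex $w$ whose pseudoquad $Q_w$ is \emph{vertically} cut cannot have small aspect ratio: by Definition~\ref{def:coronization} such a $Q_w$ is not $(\eta,R)$--paramonotone on $rQ_w$, so the contrapositive of Lemma~\ref{lem:low aspect pseudoquads} (invoked with the very $\eta,R,\mu$ and universal $r$ that define the PSFC, so that $\alpha_{\mathrm{min}}$ is exactly the constant appearing in the statement) forces $\alpha(Q_w)>\alpha_{\mathrm{min}}$. Next I would recall from Lemma~\ref{lem:soft cut props} — applicable with $\epsilon=\tfrac14$ because every $\mu$--rectilinear pseudoquad with $\mu\le\tfrac1{32}$ is rectilinear and PSFC pseudoquads have $\mu\le\tfrac1{32r^2}$ — the transformation rules along a child edge: a horizontal cut multiplies $\alpha$ by $\sqrt2$ while preserving $\delta_x$ and halving $\delta_z$, whereas a vertical cut multiplies $\alpha$ by a factor in $[\tfrac14,\tfrac34]$ while halving $\delta_x$ and preserving $\delta_z$.

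The core lemma I would then prove is: if any vertex on the root-to-$v$ path (including $v$ itself) is vertically cut, then $\alpha(Q_v)>\tfrac{\alpha_{\mathrm{min}}}4$. To see this, let $w$ be the \emph{last} such vertex on the path; if $w=v$ we are done by the dichotomy, and otherwise write the path from $w$ to $v$ as $w=u_0\to u_1\to\dots\to u_k=v$, where $u_1,\dots,u_{k-1}$ are all horizontally cut by maximality of $w$. Then $\alpha(Q_{u_1})\ge\tfrac14\alpha(Q_w)>\tfrac{\alpha_{\mathrm{min}}}4$, and each later step only multiplies the aspect ratio by $\sqrt2$, so $\alpha(Q_v)\ge\alpha(Q_{u_1})>\tfrac{\alpha_{\mathrm{min}}}4$. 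The first bullet follows immediately: either the path to $v$ contains a vertically cut vertex, giving $\alpha(Q_v)>\tfrac{\alpha_{\mathrm{min}}}4$, or every cut on the path is horizontal, in which case $\alpha(Q_v)=2^{n/2}\alpha(Q)\ge\alpha(Q)$ where $n$ is the depth of $v$; in both cases $\alpha(Q_v)\ge\min\{\alpha(Q),\tfrac{\alpha_{\mathrm{min}}}4\}$.

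For the second bullet, the case $\alpha(Q)\ge\tfrac{\alpha_{\mathrm{min}}}4$ is vacuous by the first bullet, so I would assume $\alpha(Q)<\tfrac{\alpha_{\mathrm{min}}}4$. If $\alpha(Q_v)<\tfrac{\alpha_{\mathrm{min}}}4$, the contrapositive of the core lemma says no vertex on the root-to-$v$ path is vertically cut, hence every cut there is horizontal; so $\delta_x(Q_v)=\delta_x(Q)$ and $\alpha(Q_v)=2^{n/2}\alpha(Q)$ with $n$ the depth of $v$, whence $2^{n/2}\alpha(Q)<\tfrac{\alpha_{\mathrm{min}}}4$ bounds $n<2\log_2\bigl(\tfrac{\alpha_{\mathrm{min}}}{4\alpha(Q)}\bigr)$. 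Thus every such $v$ has depth below a fixed bound, and a binary tree has only finitely many vertices of bounded depth. The third bullet is handled the same way: if $\alpha(Q)\ge\tfrac{\alpha_{\mathrm{min}}}4$ it is immediate from the first bullet, and otherwise, assuming for contradiction $\alpha(Q_v)<\tfrac{\alpha_{\mathrm{min}}}4$, the core lemma again gives $\delta_x(Q_v)=\delta_x(Q)$, so $\tfrac{\alpha_{\mathrm{min}}}4>\alpha(Q_v)=\delta_x(Q)/\sqrt{\delta_z(Q_v)}$ rearranges to $\delta_z(Q_v)>16\,\delta_x(Q)^2/\alpha_{\mathrm{min}}^2$, contradicting the hypothesis.

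The argument is almost entirely bookkeeping; the one place that needs genuine care is the core lemma, specifically the assertion that once the aspect ratio has first been pushed down (by a vertical cut) to a value still exceeding $\tfrac{\alpha_{\mathrm{min}}}4$, it can never fall below $\tfrac{\alpha_{\mathrm{min}}}4$ thereafter. This is exactly where the factor $4$ in the statement originates: a vertical cut is permitted only at a pseudoquad with $\alpha>\alpha_{\mathrm{min}}$, and by the $[\tfrac14,\tfrac34]$ bound of Lemma~\ref{lem:soft cut props} its child then has $\alpha>\tfrac14\alpha_{\mathrm{min}}$. I would also be careful to confirm that Lemma~\ref{lem:low aspect pseudoquads} is applied with precisely the $(\eta,R,\mu)$ that the PSFC was built from, so that the $\alpha_{\mathrm{min}}$ in the dichotomy is literally the $\alpha_{\mathrm{min}}$ that appears in the statement being proved.
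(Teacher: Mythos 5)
Your proof is correct and takes essentially the same route as the paper's: both hinge on the fact (from the PSFC stopping condition plus Lemma~\ref{lem:low aspect pseudoquads}) that a vertically cut vertex must have aspect ratio above $\alpha_{\mathrm{min}}$, combined with the $\epsilon=\tfrac14$ cut rules of Lemma~\ref{lem:soft cut props}. The paper merely organizes the core step contrapositively (a vertex with $\alpha(Q_v)<\tfrac{\alpha_{\mathrm{min}}}{4}$ forces its parent to be horizontally cut with even smaller aspect ratio, then inducts up the tree), which is the same argument as your ``last vertically cut ancestor'' lemma, and the finiteness and third bullets are handled identically.
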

\begin{proof}
  Let $v_0$ be the root of $\Delta$.
  
  Suppose that $v\ne v_0$ and $\alpha(Q_v) < \frac{\alpha_{\mathrm{min}}}{4}$. Let $p$ be the parent of $v$.  By Lemma~\ref{lem:soft cut props}, $\alpha(Q_v)<\alpha_{\mathrm{min}}$. Therefore, by Lemma~\ref{lem:low aspect pseudoquads}, $\Gamma$ is $(\eta,R)$--paramonotone on $rQ_p$ and $Q_p$ is horizontally cut. Therefore, $\alpha(Q_p) < \alpha(Q_v) < \frac{\alpha_{\mathrm{min}}}{4}$. The same argument thus holds for $Q_p$. By induction, every ancestor $a$ of $v$ is horizontally cut and $\alpha(Q_a) < \frac{\alpha_{\mathrm{min}}}{4}$.
  Consequently, $\alpha(Q_v)\ge \alpha(Q)$. This proves the first part of the lemma.
  
  Furthermore, if $\alpha(Q_v) < \frac{\alpha_{\mathrm{min}}}{4}$, then $\delta_x(Q_v) = \delta_x(Q)$ and thus
  $$\delta_z(Q_v) = \frac{\delta_x(Q_v)^2}{\alpha(Q_v)^2} > \frac{16 \delta_x(Q)^2}{\alpha_{\mathrm{min}}^2}.$$
  There are only finitely many such pseudoquads in $\Delta$, so this proves the rest of the lemma.
\end{proof}

Second, expansions of the pseudoquads of $\Delta$ are nested.
\begin{lemma}[{\cite[Lemma~4.7]{NY-Foliated}}] \label{lem:child-hierarchy}
  Let $\Delta$ be a PSFC and suppose $v,w\in \cV(\Delta)$ satisfy $w\le v$. Let $0<r'\le r$. Then $r'Q_w\subset r'Q_v$.
\end{lemma}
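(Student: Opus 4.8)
The plan is to peel off one generation at a time. Since set inclusion is transitive, it suffices to prove the statement when $w$ is a child of $v$ and then apply the result along the sequence of parents joining $w$ to $v$; so assume $w\in\cC(v)$. Recall that if $Q$ is a rectilinear pseudoquad with parabolic rectangle $R$, base interval $I$, bounding parabolas $h_1\le h_2$, and midpoint parabola $m_Q:=\tfrac12(h_1+h_2)$, then
\[
  r'Q=\Bigl\{(x,z)\in V_0 : x\in r'I,\ |z-m_Q(x)|\le \tfrac{r'^2}{2}\,\delta_z(Q)\Bigr\}.
\]
Hence $r'Q_w\subseteq r'Q_v$ is equivalent to the conjunction of (i) $r'I_w\subseteq r'I_v$ for the base intervals and (ii) $|m_{Q_w}(x)-m_{Q_v}(x)|\le \tfrac{r'^2}{2}\bigl(\delta_z(Q_v)-\delta_z(Q_w)\bigr)$ for every $x$ in the base of $Q_w$.

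If $v$ is vertically cut, then by Definition~\ref{def:rectilinear foliated patchwork} the rectangle $R_w$ is one of the two halves into which the line $x=\tfrac{a+b}{2}$ splits $R_v$; in particular $R_w$ and $R_v$ have the same bounding parabolas, so $m_{Q_w}=m_{Q_v}$, $\delta_z(Q_w)=\delta_z(Q_v)$, and (ii) holds with both sides equal to $0$. Condition (i) then asserts that the $r'$--dilate of a half of $I_v=[a,b]$ lies inside the $r'$--dilate of $I_v$; a short computation with the four endpoints confirms this for $r'\ge1$.

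If $v$ is horizontally cut, then $Q_w$ and $Q_v$ have the same base, so $r'I_w=r'I_v$ and (i) is immediate. For (ii), Definition~\ref{def:rectilinear foliated patchwork} gives (say) $R_w=\{(x,z):a\le x\le b,\ k(x)-d\le z\le k(x)\}$, so $m_{Q_w}=k-\tfrac{d}{2}$ and $\delta_z(Q_w)=d$, while \eqref{eq:horiz cut edge bounds} together with the $\mu$--rectilinearity of $Q_v$ gives $m_{Q_v}=k+O(\mu d)$ and $\delta_z(Q_v)=2d+O(\mu d)$. Hence $|m_{Q_w}(x)-m_{Q_v}(x)|=\tfrac{d}{2}+O(\mu d)$, $\delta_z(Q_v)-\delta_z(Q_w)=d+O(\mu d)$, and (ii) reduces to $\tfrac{d}{2}+O(\mu d)\le\tfrac{r'^2}{2}\bigl(d+O(\mu d)\bigr)$, which again amounts essentially to $r'\ge1$, the $O(\mu)$ slack being absorbed since $\mu\le\tfrac{1}{32r^2}$ is small.

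The step I expect to require the most care is the constant-tracking in (ii) for a horizontally cut $v$: combining \eqref{eq:horiz cut edge bounds} with the $\mu$--rectilinearity of $Q_v$ to locate the midpoint parabola of $R_v$ within $O(\mu d)$ of $k$, and then checking that this slack is absorbed by the gap between $\tfrac{d}{2}$ and $\tfrac{r'^2 d}{2}$. Both inclusions above are tight at $r'=1$ — the relevant edge of $r'Q_w$ meets the boundary of $r'Q_v$ — which is exactly where the lower bound on $r'$ enters; everything else is routine endpoint arithmetic, and, since each inclusion is a two-node statement, chaining them up the tree introduces nothing new.
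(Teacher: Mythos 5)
The paper offers no proof of this lemma---it simply cites Lemma~4.7 of \cite{NY-Foliated} and remarks that inspection shows $\mu\le\frac{1}{32r^2}$ suffices---so your argument must stand on its own, and as written it has a real gap in the horizontally cut case. The inclusion $r'Q_w\subset r'Q_v$ is equivalent to your (i) together with the midline estimate for every $x$ in the \emph{scaled} base $r'I$, not ``for every $x$ in the base of $Q_w$'', and $r'$ may be as large as $r>10$. All the inputs you use, \eqref{eq:def rectilinear 1 and 2} and \eqref{eq:horiz cut edge bounds}, are $L_\infty(4I)$ bounds, so your identities $m_{Q_v}=k+O(\mu d)$ and $|m_{Q_w}-m_{Q_v}|=\frac d2+O(\mu d)$ are justified only on $4I$. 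Since $k$ and the bounding parabolas of $R_v$ need not be parallel (the approximating planes of parent and child can have different slopes), their difference is a genuine quadratic, and its supremum over $r'I$ can exceed its supremum over $4I$ by a factor of order $(r'/4)^2$. Controlling that extrapolation is exactly where the hypothesis $\mu\le\frac{1}{32r^2}$---smallness of $\mu$ \emph{relative to} $r^{-2}$, not mere smallness---enters: a Chebyshev/Markov-type growth bound for quadratics turns the $O(\mu d)$ error on $4I$ into an $O(\mu r^2 d)=O(d)$ error with small constant on $rI$, which the allowance $\frac{r'^2}{2}\bigl(\delta_z(Q_v)-\delta_z(Q_w)\bigr)$ then absorbs. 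Your write-up never confronts this; the parameter $r'$ appears only on the right-hand side of (ii), and ``the $O(\mu)$ slack being absorbed since $\mu\le\frac{1}{32r^2}$ is small'' does not engage with the interval on which the error must be controlled.

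Two smaller points. First, your own vertical-cut computation shows the base inclusion holds exactly when $r'\ge 1$, so the lemma as stated (with $0<r'\le r$) is false for $r'<1$; you silently prove the statement for $r'\ge1$ and attribute this to ``the lower bound on $r'$'', which is not in the statement---this discrepancy (presumably a typo for $1\le r'\le r$; the paper only ever uses $r'=4$ and $r'=r$) should be flagged, not papered over. Second, the claim that the horizontal case ``amounts essentially to $r'\ge1$'' with tightness at $r'=1$ is not correct: at $r'=1$ the allowance is $\frac{\delta_z(Q_v)-d}{2}=\frac d2+O(\mu d)$ with an error of indeterminate sign, and indeed $1\cdot Q_w=R_w$ need not be contained in $1\cdot Q_v=R_v$ at all, since the two parabolic rectangles only approximate the same curves to within $\mu$-errors on opposite sides. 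So the one-step inclusion genuinely needs $r'$ bounded away from $1$ (or a sign analysis), which is harmless for the applications but shows the ``routine endpoint arithmetic'' is where the content lies.
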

(This result is proved for sufficiently small $\mu$ in \cite{NY-Foliated}, but inspection of the proof shows that $\mu\le \frac{1}{32r^2}$ is enough.)

Third, every pseudoquad has an approximating plane that satisfies Proposition~\ref{prop:Omega-control}.
\begin{lemma} \label{lem:full approximating planes}
  Let $\Delta$ be a PSFC as above.  For $w\in \cV(\Delta)$, let $m$ be the minimal horizontally-cut ancestor of $w$, where $m=w$ if $w\in \cVh(\Delta)$. Since the root of $\Delta$ is horizontally-cut, such an ancestor exists. Let $l_w:=l_m$ and $P_m:=P_m$, where $l_m$ and $P_m$ are as in Definition~\ref{def:lambda approximating}, and let $I$ be the base of $Q_w$. Then $Q_w$ and $P_w$ satisfy Proposition~\ref{prop:Omega-control}, i.e.,
  \begin{equation}\label{eq:L1-bound}
    \|l_w-f\|_{L_1(10 Q_w)}\lesssim \lambda \frac{\delta_z(Q_w)}{\delta_x(Q_w)} |Q_w|,
  \end{equation}
  and for any $u\in 4 Q$, if $g_\Gamma, g_{P_w}\from \R\to \R$ are functions such that $\{z=g_\Gamma(x)\}$ (respectively $\{z=g_{P_w}(x)\}$) are characteristic curves for \(\Gamma\) (respectively $P_w$) that pass through $u$, then 
  \begin{equation}\label{eq:approx-ccurve}
    \|g_{P_w}-g_\Gamma\|_{L_\infty(4 I)}\le \frac{1}{32 r^2} \delta_z(Q).
  \end{equation}
\end{lemma}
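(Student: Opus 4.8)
The plan is to obtain both conclusions for $(Q_w,P_w)$ by transferring, down the tree from $m$ to $w$, the corresponding conclusions of Proposition~\ref{prop:Omega-control} at $Q_m$, which are available because $m$ is horizontally cut. Two facts make this transfer work: the enlargements $r'Q_w$ are nested inside $r'Q_m$, and $\delta_z(Q_w)$ differs from $\delta_z(Q_m)$ by at most a factor of $2$.

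First I would analyze the path from $m$ to $w$. Since $m$ is the \emph{minimal} horizontally-cut ancestor of $w$, every vertex strictly between $m$ and $w$ on that path is a proper ancestor of $w$ lying below $m$, so it cannot be horizontally cut without contradicting minimality and is therefore vertically cut. Hence the only cut along the path that changes $\delta_z$ is the horizontal cut at $m$ itself, so by Lemma~\ref{lem:soft cut props} either $w=m$ and $\delta_z(Q_w)=\delta_z(Q_m)$, or $\delta_z(Q_w)=\tfrac12\delta_z(Q_m)$; in both cases $\tfrac12\delta_z(Q_m)\le\delta_z(Q_w)\le\delta_z(Q_m)$. Since $w\le m$ and $\Delta$ is a PSFC, Lemma~\ref{lem:child-hierarchy} gives $r'Q_w\subseteq r'Q_m$ for all $0<r'\le r$; as $r>10$, in particular $10Q_w\subseteq 10Q_m$ and $4Q_w\subseteq 4Q_m$, and projecting the last inclusion to the $x$-axis gives $4I\subseteq 4I_m$, where $I_m$ is the base of $Q_m$.

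Next I would invoke Proposition~\ref{prop:Omega-control} at $Q_m$ and restrict. Since $\Delta$ is paramonotone stopped and $m\in\cVh(\Delta)$, the pseudoquad $Q_m$ is horizontally cut, so $\Gamma$ is $(\eta,R)$-paramonotone on $rQ_m$ for constants $\eta,R$ satisfying Proposition~\ref{prop:Omega-control} with $\zeta=\tfrac1{32r^2}$; by inspection of the proof of Theorem~\ref{thm:future-use} (cf.\ the remark following it), the $\lambda$-approximating plane $P_m$ and affine function $l_m$ of Definition~\ref{def:lambda approximating} may be taken to be those produced by Proposition~\ref{prop:Omega-control}, so besides the $L_1$ bound \eqref{eq:def lambda approximating} at $Q_m$ the pair $(Q_m,P_m)$ also satisfies its second conclusion: $\|g_{P_m}-g_\Gamma\|_{L_\infty(4I_m)}\le\tfrac1{32r^2}\delta_z(Q_m)$ for every $u\in 4Q_m$ and all characteristic curves $g_\Gamma$ of $\Gamma$ and $g_{P_m}$ of $P_m$ through $u$. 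Restricting the $L_1$ integral to $10Q_w$ and using $|Q_v|\approx\delta_x(Q_v)\delta_z(Q_v)$ (Lemma~\ref{lem:soft cut props}) together with $\delta_z(Q_m)\le 2\delta_z(Q_w)$ turns $\|l_w-f\|_{L_1(10Q_w)}=\|l_m-f\|_{L_1(10Q_w)}\le\lambda\tfrac{\delta_z(Q_m)}{\delta_x(Q_m)}|Q_m|\approx\lambda\delta_z(Q_m)^2$ into $\lesssim\lambda\tfrac{\delta_z(Q_w)}{\delta_x(Q_w)}|Q_w|$, which is \eqref{eq:L1-bound}; restricting the $L_\infty$ norm from $4I_m$ to $4I$ (valid since $u\in 4Q_w\subseteq 4Q_m$ and $g_{P_w}=g_{P_m}$) and again using $\delta_z(Q_m)\le 2\delta_z(Q_w)$ gives $\|g_{P_w}-g_\Gamma\|_{L_\infty(4I)}\le\tfrac1{16r^2}\delta_z(Q_w)$, which is \eqref{eq:approx-ccurve} up to the harmless factor $2$ (one obtains the stated constant verbatim by taking $\zeta=\tfrac1{64r^2}$ in the paramonotone-stopping hypothesis instead).

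The lemma is essentially a bookkeeping statement, so I do not expect a genuine obstacle. The two points that need care are: the observation that every vertex strictly between $m$ and $w$ is vertically cut, which is what keeps $\delta_z(Q_w)$ within a factor of $2$ of $\delta_z(Q_m)$ and makes Lemma~\ref{lem:child-hierarchy} and the above restrictions usable; and the fact that the $\lambda$-approximating planes of a PSFC also satisfy the characteristic-curve conclusion of Proposition~\ref{prop:Omega-control} at the horizontally-cut pseudoquads, which is not recorded in Definition~\ref{def:lambda approximating} itself but follows from how Theorem~\ref{thm:future-use} is proved.
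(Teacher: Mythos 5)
Your proposal is correct and follows essentially the same route as the paper: since $m$ is horizontally cut in a PSFC, Proposition~\ref{prop:Omega-control} applies to $Q_m$, and the bounds are transferred to $Q_w$ via the nesting $10Q_w\subseteq 10Q_m$, $4Q_w\subseteq 4Q_m$ from Lemma~\ref{lem:child-hierarchy} together with $\delta_z(Q_w)\approx\delta_z(Q_m)$ and $|Q_v|\approx\delta_x(Q_v)\delta_z(Q_v)$. Your extra observations (that all vertices strictly between $m$ and $w$ are vertically cut, that the approximating planes are those produced by Proposition~\ref{prop:Omega-control}, and the harmless factor $2$ in \eqref{eq:approx-ccurve}) are points the paper treats implicitly, so your write-up is, if anything, slightly more careful than the published proof.
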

\begin{proof}
  For $w\in \cVh(\Delta)$, the lemma follows from Proposition~\ref{prop:Omega-control}. We thus consider $w\in \cVv$. 
  Since $Q_m$ is rectilinear, $\delta_z(Q_w)\approx \delta_z(Q_m)$. By \eqref{eq:def lambda approximating},
  \begin{multline*}
    \|l_w-f\|_{L_1(10Q_w)} 
    \le \|l_m-f\|_{L_1(10Q_m)} 
    \le \lambda \frac{\delta_z(Q_m)}{\delta_x(Q_m)} |Q_m| \\
    \approx \lambda \delta_z(Q_m)^2
    \approx \lambda \delta_z(Q_w)^2
    \approx \lambda \frac{\delta_z(Q_w)}{\delta_x(Q_w)} |Q_w|.
  \end{multline*}
  
  Suppose that $u\in 4Q_w$ and that $\{z=g_\Gamma(x)\}$ and $\{z=g_{P_w}(x)\}$ are characteristic curves through $u$. By Lemma~\ref{lem:child-hierarchy}, $u\in 4Q_m$, so \eqref{eq:approx-ccurve} follows from Proposition~\ref{prop:Omega-control} applied to $Q_m$.
\end{proof}

\section{$L_p$ approximation by piecewise affine functions}\label{sec:lp-approx}

Now we prove $L_p$ bounds on the pseudoquads of a foliated coronization for $\Gamma_f$. By Lemma~\ref{lem:full approximating planes}, these pseudoquads have $\lambda$--approximating planes, but these planes only approximate $f$ in $L_1$. In this section, we will a family of  piecewise affine functions $g_S$ that approximate $f$ and we will bound $\|f-g_S\|_p$ for $1\le p < 5$.

We will prove the following proposition.
\begin{prop}\label{prop:Lp approximating planes}
  Let $\Gamma=\Gamma_f$ be an $L$--intrinsic Lipschitz graph. Let $Q$ be a pseudoquad of \(\Gamma\) and let $\Delta$ be a PSFC with root $Q$. Let $l_v,v\in \cV(\Delta)$ be the approximating planes for $\Delta$.
  There is a $c>0$ depending on $L$ and the parameters of $\Delta$ such that for every $v\in \cVh(\Delta)$ and $1\le p <5$,
  \begin{equation}\label{eq:Lp lambda approximating}
    \|l_v-f\|_{L_p(Q_v)} \le \frac{c}{5-p} \lambda \frac{\delta_z(Q_v)}{\delta_x(Q_v)} |Q_v|^{\frac{1}{p}}.
  \end{equation}
\end{prop}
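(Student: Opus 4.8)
The plan is to upgrade the $L_1$ bound \eqref{eq:L1-bound} for a horizontally-cut vertex $v$ to an $L_p$ bound by exploiting the self-improving structure of the PSFC together with the good behavior of the approximating characteristic curves \eqref{eq:approx-ccurve}. The key observation is that on a rectilinear pseudoquad $Q_v$ the function $f-l_v$ is controlled pointwise by the intrinsic Lipschitz estimate \eqref{eq:lipschitz-vertical}: moving vertically by $\delta_z(Q_v)$ changes $f$ by $O(\sqrt{\delta_z(Q_v)})$, and since $l_v$ is affine in $x$ and $f$ is approximated along characteristic curves of $P_v$ to within $\frac{1}{32r^2}\delta_z(Q_v)$ by \eqref{eq:approx-ccurve}, the oscillation of $f-l_v$ in the vertical direction across $Q_v$ is $O(\sqrt{\delta_z(Q_v)})$. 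Thus $f-l_v$ is ``flat in the bad direction'': its sup-norm need not be small, but its vertical oscillation is. Combined with the $L_1$ bound, this gives a distributional estimate: the measure of the set where $|f-l_v| > t$ decays, and the decay improves as we localize to sub-pseudoquads.

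The first step is to set up a stopping-time / layer-cake decomposition. Fix $v\in \cVh(\Delta)$ and consider the sub-PSFC rooted at $Q_v$. For each horizontally-cut descendant $w \le v$, the bound \eqref{eq:L1-bound} applied to $w$ gives $\|l_w - f\|_{L_1(10Q_w)} \lesssim \lambda \frac{\delta_z(Q_w)}{\delta_x(Q_w)}|Q_w|$, and comparing $l_w$ to $l_v$ on $Q_w$ via triangle inequality and the $L_1$ bounds on the intermediate vertically-cut pieces, one gets $\|l_w - l_v\|_{L_1(Q_w)} \lesssim \lambda \frac{\delta_z(Q_w)}{\delta_x(Q_w)}|Q_w|$, hence (since $l_w - l_v$ is affine in $x$ on $Q_w$, so its $L_\infty$ is comparable to its average) $\|l_w - l_v\|_{L_\infty(Q_w)} \lesssim \lambda \frac{\delta_z(Q_w)}{\delta_x(Q_w)}$. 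The point of descending into $Q_w$ is that $\delta_z(Q_w)$ shrinks geometrically under horizontal cuts (Lemma~\ref{lem:soft cut props}: $\delta_z(Q_w) = 2^{-j}\delta_z(Q_v)$ after $j$ horizontal cuts, while the aspect ratio grows), so the pointwise vertical-oscillation bound $O(\sqrt{\delta_z(Q_w)})$ on $f - l_w$ becomes small, and $|f - l_v| \le |f - l_w| + |l_w - l_v|$ is controlled on all of $Q_w$ by $O(\sqrt{\delta_z(Q_w)}) + O(\lambda \delta_z(Q_w)/\delta_x(Q_w))$.

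Next I would run the layer-cake computation. Writing $\|f - l_v\|_{L_p(Q_v)}^p = p\int_0^\infty t^{p-1} |\{x \in Q_v : |f(x) - l_v(x)| > t\}|\,\ud t$, I bound the superlevel set at height $t$ by a union of sub-pseudoquads $Q_w$ at the generation where the pointwise bound first drops below $t$; the weighted-Carleson condition \eqref{eq:def weighted Carleson} (or rather its analogue for horizontal cuts, together with \eqref{eq:weight deltax}) controls the total measure $\sum |Q_w|$ of the pseudoquads at a given ``badness scale'', giving $|\{|f-l_v|>t\}| \lesssim (\lambda \delta_z(Q_v)/\delta_x(Q_v))^q |Q_v| \, t^{-q}$ for an appropriate exponent $q$ coming from how fast $\delta_z$ shrinks relative to $\sqrt{\delta_z}$ — this is where the magic number $5$ enters: the vertical oscillation scales like $\delta_z^{1/2}$ while the measure of each bad region scales like $\delta_z$, so the distributional bound decays like $t^{-q}$ with $q$ large enough that $\int_0^{\sim\text{(initial bound)}} t^{p-1}t^{-q}\,\ud t$ converges precisely when $p < 5$, and the constant blows up like $(5-p)^{-1}$ near the endpoint. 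The hard part will be bookkeeping the two competing error terms — the intrinsic-Lipschitz vertical oscillation $\sqrt{\delta_z(Q_w)}$ and the affine error $\lambda\delta_z(Q_w)/\delta_x(Q_w)$ — and verifying that after normalizing by $\delta_z(Q_v)/\delta_x(Q_v)$ the first term also obeys a clean power law; I expect one needs the aspect ratios to be bounded below (Lemma~\ref{lem:aspect-bounds}) to keep $\delta_z(Q_w)/\delta_x(Q_w)$ and $\sqrt{\delta_z(Q_w)}$ in a fixed ratio along the relevant chains, and the finiteness statement in Lemma~\ref{lem:aspect-bounds} to ensure the sum over generations is effectively finite. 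Finally, I would collect the geometric series in the generation index, which converges for $p < 5$ with a factor $(5-p)^{-1}$, absorb all $L$-- and $\Delta$--dependent constants into $c$, and arrive at \eqref{eq:Lp lambda approximating}.
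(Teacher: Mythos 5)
Your overall architecture (descend through the PSFC, compare $l_v$ to the approximating affine functions of descendants, and sum a geometric series in the generation whose convergence for $p<5$ produces the $(5-p)^{-1}$ blowup) is the right one, but two steps in the middle do not hold up. First, the comparison $\|l_w-l_v\|_{L_\infty(Q_w)}\lesssim \lambda\,\delta_z(Q_w)/\delta_x(Q_w)$ for a general descendant $w$ of $v$ is not justified by the argument you sketch, and is false in general. Your derivation needs $\|f-l_v\|_{L_1(Q_w)}\lesssim \lambda\frac{\delta_z(Q_w)}{\delta_x(Q_w)}|Q_w|$, but \eqref{eq:L1-bound} for $v$ only controls $\|f-l_v\|_{L_1(10Q_v)}$, and nothing prevents that error from concentrating on the small set $Q_w$; dividing by $|Q_w|$ then costs an unbounded factor of order $(\delta_z(Q_v)/\delta_z(Q_w))^2$. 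The bound you want is available only for parent/child pairs (Lemma~\ref{lem:child differences}) or for pseudoquads of \emph{comparable height} (Lemma~\ref{lem:nearby-diff}, where $|Q_v|/|Q_w|\approx \delta_x(Q_v)/\delta_x(Q_w)$ is exactly what saves the computation). For a deep descendant the error $l_w-l_v$ must be accumulated as a telescoping sum of single-step differences $\lambda\,\delta_z(Q_u)/\delta_x(Q_u)$ along the chain from $v$ to $w$, and this sum can be much larger than the endpoint value (e.g.\ many vertical cuts followed by many horizontal cuts, where $\delta_z/\delta_x$ rises and then falls). Controlling that accumulation is precisely the content of the paper's telescoping through the approximations $g_{R_i}$ (organized by width, using Lemma~\ref{lem:horizontal differences}), and it cannot be bypassed by a single triangle inequality.

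Second, and more importantly, the quantitative input that actually produces the exponent $5$ is missing. The weighted Carleson condition \eqref{eq:def weighted Carleson} only gives $W(\min R_i(v))\le D|Q_v|$ at every scale; running the $L_p$ computation with that alone leaves an uncontrolled factor of a positive power of $\alpha(Q_v)$ (which is unbounded over $v\in\cVh(\Delta)$, since horizontal cuts increase the aspect ratio), and it would not produce any threshold at $p=5$. What is needed is the \emph{per-scale} bound coming from the paramonotone-stopping property: every $w\in P_i(v)$ is vertically cut, hence not paramonotone, so its nonmonotonicity is at least $\eta\,\alpha(Q_w)^{-4}|Q_w|$, and combining this with the disjointness of the $rQ_w$ and the paramonotonicity of $rQ_v$ yields $W(P_i(v))\lesssim 2^i\alpha(Q_v)^{-4}|Q_v|$ (Corollary~\ref{cor:vitali weights}). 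Via \eqref{eq:weight deltax} this gives $\sum_{u\in\min R_i(v)}\delta_z(Q_u)^3\lesssim 2^{-2i}\delta_z(Q_v)^3$, and convexity then beats the $2^{i(p-1)}$ growth of the per-piece $L_\infty$ bounds exactly when $p<5$, with constant $\approx(5-p)^{-1}$. Your heuristic for the distributional estimate (``vertical oscillation $\sim\delta_z^{1/2}$ while the measure of each bad region $\sim\delta_z$'') does not supply a substitute: the pseudoquads of a fixed height tile all of $Q_v$, so their total measure is $|Q_v|$ and gives no decay in $t$; the smallness must come from the aspect-ratio weights via paramonotonicity, not from measure alone. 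As written, the superlevel-set bound $|\{|f-l_v|>t\}|\lesssim (\lambda\delta_z(Q_v)/\delta_x(Q_v))^qt^{-q}|Q_v|$ is asserted rather than proved, and with $q=5$ it would even be stronger than the proposition (a weak-$L_5$ endpoint) — so this is a genuine gap, not bookkeeping. (A smaller issue of the same kind: your pointwise term $\sqrt{\delta_z(Q_w)}$ carries no factor of $\lambda$, so it must disappear in a limit, as in the paper where $g_{R_i}\to f$ and Fatou is applied, rather than survive into the final estimate.)
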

We prove Proposition~\ref{prop:Lp approximating planes} by approximating $f$ by piecewise-affine functions. 

We will need a few definitions. Let $\Delta$ be a rooted tree.
A \emph{coherent} set $S\subset \cV(\Delta)$ is a subset with the following properties:
\begin{enumerate}
\item $S$ has a unique maximal element $M=\mathsf{max}(S)\in S$.
\item If $v\in S$ and $w\in \cV(\Delta)$ satisfies $v<w<M$, then $w\in S$.
\item \label{it:coherence siblings}
  If $v\in S$, then either all of the children of $v$ are contained in $S$ or none of them are.
\end{enumerate}
A \emph{partition} of a subset $U\subset V_0$ into pseudoquads is a finite collection $Q_1,\dots, Q_k$ of pseudoquads such that $\bigcup Q_i=U$ and such that the interiors of the $Q_i$'s are pairwise disjoint.  A coherent subset of a foliated patchwork corresponds to a partition of a set into pseudoquads. The following lemma is Lemma~6.3 of \cite{NY-Foliated}.
\begin{lemma}\label{lem:coherent partitions}
  Let $\Delta$ be a rectilinear foliated patchwork and let $S\subset \cV(\Delta)$ be coherent.  Let $M=\max S$ be the maximal element of \(S\) and let $\min S$ be the set of minimal elements of $S$.  Let
  $$F_1=F_1(S)=\{p\in Q_M\mid \textup{there are infinitely many $v\in S$ such that $p\in Q_v$}\}$$
  and let $F_2=F_2(S)=Q_M\setminus F_1$.  Then
  \begin{equation}\label{eq:coherent partitions}
    Q_M=F_1\cup \bigcup_{w\in \min S} Q_w,
  \end{equation}
  and the interiors of the $Q_w$'s are pairwise disjoint and disjoint from $F_1$.  If $S$ is finite, then $\min S$ is a partition of $Q_M$.
\end{lemma}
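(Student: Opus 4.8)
\textbf{Plan of proof for Lemma~\ref{lem:coherent partitions}.}
The statement is purely combinatorial about the tree $\Delta$ together with the elementary geometric fact that for a vertically or horizontally cut vertex $v$ with children $w,w'$ one has $Q_v = Q_w\cup Q_{w'}$ with disjoint interiors (built into Definition~\ref{def:rectilinear foliated patchwork}). The plan is to argue by tracking, for a fixed point $p\in Q_M$, the set of vertices $v\in S$ with $p\in Q_v$, and to split into the two cases defining $F_1$ and $F_2$.

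First I would fix $p\in Q_M$ and consider the set $S_p=\{v\in S\mid p\in Q_v\}$. Since $M\in S_p$, this set is nonempty. By coherence property (2), $S_p$ is a chain below $M$: if $v,v'\in S_p$ then $Q_v$ and $Q_{v'}$ both contain $p$, and since the pseudoquads associated to two incomparable vertices at the same ``depth'' have disjoint interiors while those associated to comparable vertices are nested, $v$ and $v'$ must be comparable (a point can lie in at most one of the two children of any cut vertex up to the shared boundary; I would handle boundary points by noting the union is still exhausted). Hence $S_p$ is totally ordered. There are now two cases. If $S_p$ is infinite, then by definition $p\in F_1$, which accounts for $p$ on the right-hand side of \eqref{eq:coherent partitions}. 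If $S_p$ is finite, let $w=\min S_p$ be its least element. I claim $w\in\min S$: if some child $w'$ of $w$ were in $S$, then by coherence property (3) all children of $w$ lie in $S$, and since $Q_w=Q_{w'}\cup Q_{w''}$ with $p\in Q_w$, the point $p$ lies in some child's pseudoquad, so that child is in $S_p$, contradicting minimality of $w$ in $S_p$. Thus $w$ has no children in $S$, i.e.\ $w\in\min S$, and $p\in Q_w$. This proves $Q_M\subseteq F_1\cup\bigcup_{w\in\min S}Q_w$; the reverse inclusion is immediate since every $Q_w$ with $w\le M$ satisfies $Q_w\subseteq Q_M$ (by iterating $Q_{w}\subseteq Q_{\text{parent}}$) and $F_1\subseteq Q_M$ by definition.

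Next I would establish the disjointness claims. Suppose $w_1\ne w_2$ both lie in $\min S$. Since $S$ is coherent with maximal element $M$ and every element of $S$ is comparable to $M$, both $w_1,w_2\le M$; if they were comparable, say $w_1<w_2$, then $w_1$ would be a strict descendant in $S$ of an element of $\min S$, contradicting minimality of $w_2$ — so $w_1,w_2$ are incomparable. For incomparable vertices, let $u$ be their closest common ancestor; then $w_1$ and $w_2$ lie under different children of $u$, and since at each cut the two child pseudoquads have disjoint interiors and nesting is monotone, $\inter(Q_{w_1})\cap\inter(Q_{w_2})=\emptyset$. For disjointness of $F_1$ from each $Q_w$ with $w\in\min S$: if $p\in F_1$, then $S_p$ is infinite, hence contains vertices strictly below $w$ for every $w\in S_p$, so $p$ cannot lie in the interior of any pseudoquad associated to a minimal element of $S$ (such a $p$ in $\inter(Q_w)$ would force $w$ to not be the least element of the infinite chain $S_p$ — more carefully, $S_p$ infinite and totally ordered has no least element, so no $w\in\min S$ can belong to $S_p$ with $p$ interior to $Q_w$; one phrases this using that $\inter(Q_w)\cap Q_{w'}$ for $w'$ a proper descendant of a sibling-branch is empty). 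Finally, if $S$ is finite, then for every $p\in Q_M$ the chain $S_p$ is finite, so $F_1=\emptyset$, and \eqref{eq:coherent partitions} reduces to $Q_M=\bigcup_{w\in\min S}Q_w$ with pairwise disjoint interiors, i.e.\ $\min S$ is a partition of $Q_M$.

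\textbf{Main obstacle.} The only genuinely delicate point is the bookkeeping with boundary points: a point $p$ on a shared characteristic curve or vertical line between two sibling pseudoquads lies in both $Q_w$ and $Q_{w'}$, so ``$S_p$ is a chain'' is false on a measure-zero set, and ``$p\in F_1$ iff infinitely many $v\in S$ contain $p$'' must be read carefully. The resolution is that all the asserted equalities of sets are still exact (the $Q_v$ are closed and their union is $Q_M$), and all disjointness statements are about \emph{interiors}, so this set of boundary points does not affect the conclusion — but the write-up should make the distinction between ``$p\in Q_v$'' and ``$p\in\inter(Q_v)$'' explicit rather than pretending $S_p$ is literally totally ordered for every $p$. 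Since this lemma is quoted from \cite[Lemma~6.3]{NY-Foliated}, in the paper itself one may simply cite it, but the above is how I would reconstruct the proof.
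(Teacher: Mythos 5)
The paper does not prove this lemma at all: it is quoted verbatim as Lemma~6.3 of \cite{NY-Foliated}, so the only ``proof'' in the paper is a citation. Your reconstruction is the natural self-contained argument and is essentially correct: for $p\in Q_M$ you look at $S_p=\{v\in S\mid p\in Q_v\}$, split on whether $S_p$ is infinite (then $p\in F_1$) or finite (then a minimal element $w$ of $S_p$ must lie in $\min S$ by coherence property (3), since otherwise both children of $w$ lie in $S$ and $Q_w$ is the union of the children's pseudoquads, so one child would be in $S_p$ below $w$). Incomparability of distinct elements of $\min S$ plus nesting under different children of the least common ancestor gives the pairwise disjointness of interiors, and finiteness of $S$ forces $F_1=\emptyset$, giving the partition statement.

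Two small points of bookkeeping in your write-up deserve tightening, both of which you partly flag yourself. First, $S_p$ need not be a chain (boundary points can lie in pseudoquads from several branches), but your main argument never actually needs this: replace ``the least element of $S_p$'' by ``a minimal element of $S_p$'' (which exists since $S_p$ is finite in that case) and the case analysis goes through unchanged. Second, the disjointness of $F_1$ from $\inter(Q_w)$ is argued loosely via ``an infinite chain has no least element''; the clean statement is that if $p\in\inter(Q_w)$ with $w\in\min S$, then $S_p\subseteq\{w\}\cup\{\text{ancestors of }w\}$, which is finite, so $p\notin F_1$. For this you need the fact you mention in passing, namely $Q_v\cap\inter(Q_w)=\emptyset$ for $v$ incomparable to $w$; this is slightly stronger than disjointness of interiors and uses that pseudoquads in a rectilinear patchwork are closures of their interiors (their height satisfies $g_2-g_1\ge(1-2\mu)\delta_z>0$ by \eqref{eq:def rectilinear 1 and 2}), so a closed pseudoquad in one child of the common ancestor cannot meet the open interior of the other child. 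With those two sentences added, your proof is complete and matches what one finds in \cite{NY-Foliated}.
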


We use these partitions to define approximations of $f$.

\begin{defn}[piecewise-affine approximations]\label{def:gS}
Let $\Delta$ be a paramonotone stopped foliated coronization (a PSFC) for $\Gamma_f$. By Lemma~\ref{lem:full approximating planes}, there is an associated collection of affine functions $l_w,w\in \cV(\Delta)$. For any coherent set $S\subset \cV(\Delta)$, we define a function $g_S\in L_\infty(Q_{\mathsf{max}(S)})$ as follows.  Let $F_1,F_2$ be as in Lemma~\ref{lem:coherent partitions} and define $g_S$ so that $g_S|_{F_1}=f|_{F_1}$ and $g_S|_{Q_w}=l_w|_{Q_w}$ for all $w\in \mathsf{min}(S)$; this is well-defined away from the boundaries of the $Q_w$'s.
\end{defn}

We claim that as $T$ gets larger and larger, $g_T$ converges to $f$ in $L_p$. We first prove convergence in $L_1$.
\begin{lemma}\label{lem:coherent convergence}
  Let $\Delta$ be a PSFC for a pseudoquad $Q$.  Let $v\in \cV(\Delta)$ and let $S\subset \cV(\Delta)$ be a coherent subset with $\max S=v$.  Let $T_1,T_2,\dots\subset S$ be a sequence of coherent sets with $\max T_i=v$ such that $T_1\subset T_2\subset \dots$ and $\bigcup T_i=S$.  Then
  $$\lim_{i\to \infty} \|g_{T_i}-g_S\|_{L_1(Q_v)}=0.$$
\end{lemma}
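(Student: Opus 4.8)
The plan is to reduce the statement to a countable sum of contributions from the newly-revealed pieces $Q_w$, $w \in \mathsf{min}(T_i) \setminus \{v : v \text{ appears in all } T_j\}$, and to show this sum tends to $0$ as a tail of a convergent series. First I would observe that $g_{T_i}$ and $g_S$ agree off a set built from those $Q_w$: precisely, wherever $p$ lies in some $Q_w$ with $w \in \mathsf{min}(T_i)$ and that same $w$ (or an ancestor still in $S$ but with children never entering $T_i$) is \emph{not} minimal in $S$, the two functions can differ; but $g_{T_i}$ restricted to such a $Q_w$ equals the affine function $l_w$, while $g_S$ on $Q_w$ equals $g_{S|_{\cD(w)}}$, i.e. the approximation associated to the sub-patchwork rooted at $w$. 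So the core estimate is a pointwise/integral bound on $\|g_{T_i} - g_S\|_{L_1(Q_v)} \le \sum_{w} \|l_w - g_{S \cap \cD(w)}\|_{L_1(Q_w)}$ where $w$ ranges over the minimal elements of $T_i$ that are not minimal in $S$ (together with the $F_1$-type exceptional set, which has the same structure in the limit).

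Next I would bound each term $\|l_w - g_{S \cap \cD(w)}\|_{L_1(Q_w)}$ by a sum of telescoping increments along the tree: writing $g_{S \cap \cD(w)}$ as a limit of $g_{T'}$ for finite coherent $T' \subseteq S \cap \cD(w)$, one controls it by $\|l_w - f\|_{L_1(Q_w)}$ plus $\|f - g_{S\cap\cD(w)}\|_{L_1(Q_w)}$. The first is exactly the $L_1$ approximating-plane bound \eqref{eq:L1-bound} from Lemma~\ref{lem:full approximating planes}, giving $\lesssim \lambda \frac{\delta_z(Q_w)}{\delta_x(Q_w)} |Q_w|$; since $\delta_z/\delta_x \le \delta_z(Q_v)/\delta_x$ is bounded on $\cD(v)$ (heights only shrink under cuts) and $|Q_w|$ sum to at most $|Q_v|$ over a single generation, the total of these terms over $w$ in a fixed generation is $\lesssim \lambda |Q_v|$. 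The summability comes from the weighted-Carleson condition: more directly, the collection $\{Q_w : w \in \mathsf{min}(T_i), w \text{ not minimal in } S\}$ shrinks as $i \to \infty$ because $\bigcup T_i = S$ forces every vertex of $S$ to eventually be included, so $\{Q_w\}$ for these $w$ is contained in $Q_v \setminus \bigcup_{j \le i}(\text{already-settled region})$, a nested decreasing family with $\bigcap = F_1(S)$ up to a null set (here Lemma~\ref{lem:measure-equiv} and $f \in L_\infty$ on each pseudoquad via Lemma~\ref{lem:intrinsic-lipschitz-metric} keep everything integrable). Thus $\bigcup_{w}\inter Q_w$ has measure tending to $0$, and since on each such $Q_w$ both $g_{T_i}$ and $g_S$ are uniformly bounded (by $\|f\|_{L_\infty(Q_v)}$ plus a constant times the diameter bound from \eqref{eq:lipschitz-vertical} and the $L_1$-plane bound), dominated convergence gives $\|g_{T_i} - g_S\|_{L_1(Q_v)} \to 0$.

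Concretely the steps in order are: (1) express $g_{T_i} - g_S$ as supported on $E_i := \bigcup\{\inter Q_w : w \in \mathsf{min}(T_i),\ w \notin \mathsf{min}(S)\}$, using Definition~\ref{def:gS} and Lemma~\ref{lem:coherent partitions}; (2) show $|E_i| \to 0$ using $T_1 \subset T_2 \subset \cdots$, $\bigcup T_i = S$, and the fact that once a vertex $w$ and its children lie in $T_i$ it is no longer a ``new'' minimal element — so the $E_i$ form (up to a measure-zero set) a decreasing family intersecting in the settled part of $Q_v$; (3) bound $|g_{T_i}|, |g_S|$ uniformly on $Q_v$: $g_S$ equals $f$ or some $l_w$ pointwise, $\|f\|_{L_\infty(Q_v)} < \infty$ by \eqref{eq:lipschitz-vertical} applied from a fixed reference point, and the slopes/offsets of the $l_w$ are controlled by \eqref{eq:L1-bound} combined with the rectilinearity geometry; (4) conclude by dominated convergence. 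The main obstacle will be step (2)–(3): making precise that the ``new minimal pieces'' genuinely have vanishing total measure requires carefully tracking how $\mathsf{min}(T_i)$ evolves as $i$ grows (a vertex $w$ stays minimal in $T_i$ until $T_i$ contains its children, after which it drops out), and verifying the uniform $L_\infty$ bound on the $l_w$ — which needs the slope bound $|\mathrm{slope}(P_w)|$ implicit in \eqref{eq:L1-bound} together with the width/height relations — so that dominated convergence is legitimately applicable rather than merely giving measure decay.
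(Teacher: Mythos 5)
There is a genuine gap in your step (2), and it is the crux of the argument. The claim that $|E_i|\to 0$, where $E_i=\bigcup\{\inter Q_w : w\in\mathsf{min}(T_i),\,w\notin\mathsf{min}(S)\}$, is false in general: take $S=\cD(v)$ (the full descendant set, which is coherent) and $T_i=\cC^{\le i}(v)$. Then $\mathsf{min}(S)=\emptyset$ and $F_1(S)=Q_v$, so every minimal vertex of $T_i$ is ``new'' and $E_i$ is all of $Q_v$ (up to boundaries) for every $i$; the sets $E_i$ do not shrink, and their intersection $F_1(S)$ can have full measure rather than being null. Consequently the dominated-convergence step has nothing to converge on: you have neither support decay nor an independent argument for pointwise a.e.\ convergence of $g_{T_i}$ to $g_S$. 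Your fallback estimate does not rescue this: bounding each new piece by $\|l_w-f\|_{L_1(Q_w)}+\|f-g_{S\cap\cD(w)}\|_{L_1(Q_w)}\lesssim \lambda\frac{\delta_z(Q_w)}{\delta_x(Q_w)}|Q_w|$ and using that $\frac{\delta_z(Q_w)}{\delta_x(Q_w)}$ is bounded on $\cD(v)$ only yields a \emph{uniform} bound $\lesssim \lambda\,\delta_x(Q_v)\,\alpha_{\mathrm{min}}^{-2}|Q_v|$ for $\|g_{T_i}-g_S\|_{L_1(Q_v)}$, with no decay in $i$. (Also, the parenthetical reason you give for that boundedness, ``heights only shrink under cuts,'' is not the right one: vertical cuts halve $\delta_x$ while keeping $\delta_z$ fixed, so the ratio can grow; the correct reason is the aspect-ratio lower bound $\alpha(Q_w)\ge\min\{\alpha(Q_{v_0}),\alpha_{\mathrm{min}}/4\}$ from Lemma~\ref{lem:aspect-bounds}, which gives $\frac{\delta_z(Q_w)}{\delta_x(Q_w)}\lesssim |Q_w|^{1/3}$.)

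The missing idea, which is how the paper proceeds, is to make the per-piece bound \emph{superlinear} in $|Q_w|$: using the aspect-ratio lower bound one gets $\|g_T-f\|_{L_1(Q_u)}\lesssim |Q_u|^{4/3}$ for every coherent $T$ with $\max T=u$, hence $\|g_{T_i}-g_S\|_{L_1(Q_u)}\lesssim |Q_u|^{4/3}\le a_i^{1/3}|Q_u|$ on each new piece, where $a_i=\max_{u\in N_i}|Q_u|$. Summing over the disjoint pieces gives $\|g_{T_i}-g_S\|_{L_1(Q_v)}\lesssim a_i^{1/3}|Q_v|$, and what actually tends to zero is $a_i$, not $|E_i|$: by Lemma~\ref{lem:soft cut props} only finitely many $w\in S$ satisfy $|Q_w|>\epsilon$, so once $T_i$ contains them all, every $u\in N_i$ has a child in $S$ of measure at most $\epsilon$ and hence $|Q_u|\le 4\epsilon$. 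Your step (1) (localizing the difference to the new minimal pieces and writing it as $g_{T_i\cap\cD(u)}-g_{S\cap\cD(u)}$) agrees with the paper; it is the quantitative gain per piece, replacing the measure-decay/DCT mechanism, that your proposal lacks and that cannot be avoided here.
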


\begin{proof}
  Let $T$ be a coherent set with $u=\max T$. We first show that
  \begin{equation}\label{eq:four-thirds}
    \|g_T - f\|_{L_1(Q_u)} \lesssim |Q_u|^{\frac{4}{3}},
  \end{equation}
  with implicit constant depending on the parameters of $\Delta$.
  Let $C=\min\left\{\frac{\alpha_{\mathrm{min}}}{4}, \alpha(Q_{v_0})\right\}$ so that $\alpha(Q_w)\ge C$ for all $w\in \cV(\Delta)$ by Lemma~\ref{lem:aspect-bounds}. Since $Q_w$ is rectilinear, we have $|Q_w|\approx \delta_z(Q_w)\delta_x(Q_w)$, so 
  $$\delta_z(Q_w)\approx \alpha(Q_w)^{-\frac{2}{3}} |Q_w|^{\frac{2}{3}} \le C^{-\frac{2}{3}} |Q_w|^{\frac{2}{3}}$$
  and
  $$\delta_x(Q_w)\approx \alpha(Q_w)^{\frac{2}{3}} |Q_w|^{\frac{1}{3}} \ge C^{\frac{2}{3}} |Q_w|^{\frac{1}{3}}.$$

  Let $l_w$ be the $\lambda$--approximating planes for $\Delta$.  Then, by Lemma~\ref{lem:full approximating planes}, 
  $$\|l_w - f\|_{L_1(Q_w)} \lesssim \lambda \frac{\delta_z(Q_w)}{\delta_x(Q_w)} |Q_w| \lesssim \lambda C^{-\frac{4}{3}} |Q_w|^{\frac{4}{3}}.$$
  By Lemma~\ref{lem:coherent partitions},
  $$\|g_{T}-f\|_{L_1(Q_u)} = \sum_{m\in \min T} \|g_{T}-f\|_{L_1(Q_m)} \lesssim \sum_{m\in \min T}  \lambda C^{-\frac{4}{3}} |Q_u|^{\frac{1}{3}} |Q_m| \le \lambda C^{-\frac{4}{3}} |Q_u|^{\frac{1}{3}}|Q_u|,$$
  which implies \eqref{eq:four-thirds}.

  Now we consider $g_S$ and $g_{T_i}$. Let $F_1(S)$ and $F_1(T_i)$ be as in Lemma~\ref{lem:coherent partitions} for $S$ and $T_i$; note that $F_1(T)\subset F_1(S)$, so $g_{T_i}=g_S=f$ on $F_1(T)$. Let $M_i = \min T_i$. If $u \in M_i \cap \min S$, then $g_S = g_{T_i} = l_u$ on $\inter Q_u$. Let $N_i = M_i\setminus \min S$. Then for each $u\in N_i$, the intersections $T_i\cap \cD(u)$ and $S\cap \cD(u)$ are coherent sets containing $u$, and 
  $$\|g_{T_i}-g_S\|_{L_1(Q_v)} = \sum_{u\in N_i} \|g_{T_i}-g_S\|_{L_1(Q_u)} = \sum_{u\in N_i} \|g_{T_i\cap \cD(u)}-g_{S\cap \cD(u)}\|_{L_1(Q_u)}.$$
  Let $a_i= \max_{u\in N_i} |Q_u|$. By \eqref{eq:four-thirds},
  $$\|g_{T_i}-g_S\|_{L_1(Q_v)} \lesssim a_i^{\frac{1}{3}} \sum_{u\in N_i} |Q_u|\le a_i^{\frac{1}{3}} |Q_v|,$$
  so it suffices to show that $\lim_i a_i=0$.

  Let $\epsilon>0$. By Lemma~\ref{lem:soft cut props}, for any $\epsilon>0$, there are only finitely many $w\in S$ such that $|Q_w|>\epsilon$. Suppose that $i$ is large enough that $T_i$ contains every such $w$. Then any $u\in N_i$ has a child $u'\in S$ with $|Q_{u'}|\le \epsilon$. By Lemma~\ref{lem:soft cut props}, $|Q_u|\le 4\epsilon$, so $a_i\le 4\epsilon$. Letting $\epsilon$ go to zero, we find that $\lim_i a_i=0$ and thus $\lim_i \|g_{T_i}-g_S\|_{L_1(Q_v)}=0$.
\end{proof}

We thus consider how $g_S$ changes when we enlarge $S$. First, we consider adding the children of a vertex $w\in \mathsf{min}(S)$ to $S$. This corresponds to cutting a pseudoquad in the corresponding partition into two pieces.
\begin{lemma}\label{lem:child differences}
  Let $S$ be a coherent set and let $w\in \mathsf{min}(S)$.  Then $S'=S\cup \cC(w)$ is coherent, $\supp(g_S-g_{S'})\subset Q_w$, and 
  $$\|g_S-g_{S'}\|_{L_\infty(Q_w)}\lesssim \lambda \frac{\delta_z(Q_u)}{\delta_x(Q_u)}.$$
\end{lemma}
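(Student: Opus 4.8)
The plan is to verify the three assertions of the lemma in turn: coherence of $S'$, the support claim, and the $L_\infty$ estimate.

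\emph{Coherence and support (bookkeeping).} I would check the three defining properties of a coherent set directly for $S'=S\cup\cC(w)$. Adding the two children of $w$ does not change the maximal element, since both new vertices lie strictly below $w\le\max S$. The ``interval'' property is inherited from $S$ for vertices of $S$; for a new vertex $v\in\cC(w)$, any $v<w'<\max S'$ forces $w\le w'$, and $w\in S$, so $w'\in S'$ (trivially if $w'=w$, and by coherence of $S$ otherwise). The sibling property is immediate: the only vertex whose children were added is $w$, and now both of its children lie in $S'$, while the children of the new vertices are grandchildren of $w$, hence not in $S\subset S'$ because $w\in\min S$. For the support, note that $S$ and $S'$ differ by two vertices, so $F_1(S)=F_1(S')$ (belonging to infinitely many $Q_u$ is unaffected by adding finitely many), whence $g_S=f=g_{S'}$ on $F_1$; and $\min S'=(\min S\setminus\{w\})\cup\cC(w)$, so $g_S=l_m=g_{S'}$ on $\inter Q_m$ for every $m\in\min S\setminus\{w\}$. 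Since $Q_{\max S}$ is, up to a null set, the disjoint union of $F_1$ and the $Q_m$, $m\in\min S$ (Lemma~\ref{lem:coherent partitions}), $g_S-g_{S'}$ vanishes off $Q_w$.

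\emph{The $L_\infty$ estimate.} This is the one substantive point. On each child $Q_{w'}$ of $Q_w$ we have $g_S-g_{S'}=l_w-l_{w'}$, where $l_w$ and $l_{w'}$ are the affine functions supplied by Lemma~\ref{lem:full approximating planes} (in several cases these coincide and there is nothing to prove, but this need not be analyzed separately). Both satisfy the $L_1$ estimate \eqref{eq:L1-bound} on $10Q_w$ and $10Q_{w'}$ respectively; restricting both to $Q_{w'}\subset 10Q_w\cap 10Q_{w'}$, applying the triangle inequality, and using $\frac{\delta_z(Q_{w'})}{\delta_x(Q_{w'})}|Q_{w'}|\approx\frac{\delta_z(Q_w)}{\delta_x(Q_w)}|Q_w|\approx\frac{\delta_z(Q_w)}{\delta_x(Q_w)}|Q_{w'}|$ (which holds whether $w$ is cut horizontally or vertically, by Lemma~\ref{lem:soft cut props}) gives $\|l_w-l_{w'}\|_{L_1(Q_{w'})}\lesssim\lambda\frac{\delta_z(Q_w)}{\delta_x(Q_w)}|Q_{w'}|$.

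\emph{From $L_1$ to $L_\infty$, and the main obstacle.} Finally I would convert this into the desired $L_\infty$ bound. The function $l_w-l_{w'}$ is affine in $x$ and constant in $z$, and $Q_{w'}$ is rectilinear, so its vertical fibres have length $\approx\delta_z(Q_{w'})$ at every $x$; hence $\|l_w-l_{w'}\|_{L_1(Q_{w'})}\approx\delta_z(Q_{w'})\,\|l_w-l_{w'}\|_{L_1(I_{w'})}\gtrsim|Q_{w'}|\,\|l_w-l_{w'}\|_{L_\infty(Q_{w'})}$, using the equivalence of the $L_1$ and $L_\infty$ norms on the two-dimensional space of affine functions on the interval $I_{w'}$ together with $|Q_{w'}|\approx\delta_x(Q_{w'})\delta_z(Q_{w'})$. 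This yields $\|l_w-l_{w'}\|_{L_\infty(Q_{w'})}\lesssim\lambda\frac{\delta_z(Q_w)}{\delta_x(Q_w)}$, and taking the maximum over the two children of $w$ gives $\|g_S-g_{S'}\|_{L_\infty(Q_w)}\lesssim\lambda\frac{\delta_z(Q_w)}{\delta_x(Q_w)}$. The main obstacle is organizing this last paragraph cleanly: knowing via Lemma~\ref{lem:full approximating planes} that $l_{w'}$ still obeys an $L_1$ approximation bound on $Q_{w'}$ even when $w'$ is vertically cut, and making the standard but essential passage from an $L_1$ bound on a rectilinear pseudoquad to an $L_\infty$ bound on an affine function by controlling the fibre lengths.
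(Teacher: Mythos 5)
Your proposal is correct and follows essentially the same route as the paper: both hinge on the triangle inequality applied to the $L_1$ bounds of Lemma~\ref{lem:full approximating planes} for $l_w$ and its children's planes, followed by the standard $L_1$-to-$L_\infty$ comparison for affine functions on a rectilinear region (which the paper records as \eqref{eq:affine-lp-comp} and uses implicitly here). The only cosmetic differences are that you carry out the comparison on each child $Q_{w'}$ and take a maximum, whereas the paper bounds $\|l_w-l_u\|_{L_\infty}$ on the larger set $10Q_u\supset Q_w$, and that you spell out the coherence/support bookkeeping that the paper dismisses as immediate from the definitions.
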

\begin{proof}
  It follows from the definitions that $g_S$ and $g_{S'}$ agree outside of $Q_w$.  Let $u$ be a child of $w$.  Then $\delta_x(Q_u)\approx \delta_x(Q_w)$, $\delta_z(Q_u)\approx \delta_z(Q_u)$, and $|Q_u|\approx |Q_w|$, so 
  $$\|l_w-l_u\|_{L_1(10 Q_u)} \le \|l_w-f\|_{L_1(10 Q_u)} + \|f-l_u\|_{L_1(10 Q_u)} \lesssim \lambda \frac{\delta_z(Q_w)}{\delta_x(Q_w)} |Q_w|.$$
  Since $Q_w\subset 10Q_u$ and since $l_w$ and $l_u$ are affine functions, 
  \begin{equation}\label{eq:approximating plane differences}
    \|l_w-l_u\|_{L_\infty(Q_w)}\le \|l_w-l_u\|_{L_\infty(10 Q_u)} \lesssim \lambda \frac{\delta_z(Q_w)}{\delta_x(Q_w)}.
  \end{equation}

  Let $u$ and $u'$ be the children of $w$.  Then
  $$\|g_S-g_{S'}\|_{L_\infty(Q_w)}\le \max\{\|l_w-l_u\|_{L_\infty(Q_w)}, \|l_w-l_{u'}\|_{L_\infty(Q_w)}\} \lesssim \lambda \frac{\delta_z(Q_w)}{\delta_x(Q_w)},$$
  as desired.
\end{proof}

Next, we consider adding coherent subsets to $S$.
For a horizontally-cut vertex $w\in \cVh(\Delta)$ and a descendant $v\in \cD(w)$, we say that $v$ is an \emph{$h$--descendant} of $w$ if every vertex on the path from $w$ to $v$, except possibly $v$ itself, is horizontally cut. We let the \emph{$h$--subtree} $D^h(w)\subset \cD(w)$ to be the set of $h$--descendants of $w$. Note that $D^h(w)$ is coherent, and it corresponds to a partition of $Q_w$ into a stack of pseudoquads, all with the same width as $Q_w$.

\begin{lemma}\label{lem:horizontal differences}
  Let $S\subset \cV(\Delta)$ be a coherent set and let $w\in \mathsf{min}(S)\cap \cVh(\Delta)$. Let $S'=S\cup D^h(w)$. Then $\supp(g_S-g_{S'})\subset Q_w$ and
  $$\|g_{S'}-g_S\|_{L_\infty(Q_w)} \lesssim \lambda \frac{\delta_z(Q_w)}{\delta_x(Q_w)}.$$
\end{lemma}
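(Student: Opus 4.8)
The statement to prove is Lemma~\ref{lem:horizontal differences}: if $S$ is coherent, $w \in \mathsf{min}(S) \cap \cVh(\Delta)$, and $S' = S \cup D^h(w)$, then $g_S$ and $g_{S'}$ agree off $Q_w$ and $\|g_{S'} - g_S\|_{L_\infty(Q_w)} \lesssim \lambda \frac{\delta_z(Q_w)}{\delta_x(Q_w)}$. The support claim is immediate from the definition of $g_S$: outside $Q_w$ the partitions $\min S$ and $\min S'$ coincide, and the associated affine pieces are unchanged, so $g_S = g_{S'}$ there (and on the portion of $Q_w$ meeting $F_1(S)$ both equal $f$). So the work is the $L_\infty$ bound on $Q_w$.

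On $Q_w$, $g_S = l_w$ (since $w \in \min S$), while $g_{S'}$ is the piecewise-affine function whose pieces are $l_u$ for $u$ ranging over the minimal elements of $S'$ lying in $\cD(w)$ — which, since every vertex on the path from $w$ down is horizontally cut (until possibly the last), form a partition of $Q_w$ into a vertical stack of pseudoquads $Q_u$, all of width $\delta_x(Q_w)$. So it suffices to bound $\|l_w - l_u\|_{L_\infty(Q_w)}$ uniformly over $u \in \min(S') \cap \cD(w)$. The natural route is exactly the one used in Lemma~\ref{lem:child differences}: compare $l_w$ and $l_u$ via $f$ on a common large region. Here, since $u$ is an $h$-descendant of $w$, we have $\delta_x(Q_u) = \delta_x(Q_w)$ and, by Lemma~\ref{lem:aspect-bounds} (aspect ratios bounded below) together with $\delta_z(Q_u) \le \delta_z(Q_w)$, we get $|Q_u| \lesssim |Q_w|$ and $\delta_z(Q_u)/\delta_x(Q_u) \le \delta_z(Q_w)/\delta_x(Q_w)$. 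The key geometric fact is that $Q_w \subset 10 Q_u$ — this requires checking that the horizontally stacked pseudoquad $Q_u$, when dilated by $10$, swallows all of $Q_w$; this is where Lemma~\ref{lem:child-hierarchy} (nesting of expansions, $r' Q_w \subset r' Q_v$ for $w \le v$) should be invoked, perhaps combined with the fact that the stack has bounded total height relative to a single $Q_u$ only if the stack is short — so I expect one actually needs $u$ ranging over $\min(S' \cap \cD(w))$, and the relevant containment to be $Q_w \subset 10 Q_u$ coming from $\delta_z(Q_w) \lesssim \delta_x(Q_w)^2/\alpha_{\min}^2$-type control, i.e. the aspect ratio lower bound forces $\delta_z(Q_w)/\delta_x(Q_w)^2$ to be bounded, hence the stack above each $Q_u$ is not too tall compared to $10 Q_u$. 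Once $Q_w \subset 10 Q_u$ is established:
\begin{equation*}
  \|l_w - l_u\|_{L_1(10 Q_u)} \le \|l_w - f\|_{L_1(10 Q_u)} + \|f - l_u\|_{L_1(10 Q_u)} \lesssim \lambda \frac{\delta_z(Q_w)}{\delta_x(Q_w)} |Q_w|,
\end{equation*}
using Lemma~\ref{lem:full approximating planes} for both $l_w$ and $l_u$ (noting $10 Q_u \subset 10 Q_w$ for the first term and that $\delta_z(Q_u)/\delta_x(Q_u) \cdot |Q_u| \lesssim \delta_z(Q_w)/\delta_x(Q_w) \cdot |Q_w|$ for the second). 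Then, since $l_w - l_u$ is affine and $Q_w \subset 10 Q_u$ has comparable measure to $10 Q_u$ (bounded geometry of parabolic rectangles), an $L_1$-to-$L_\infty$ comparison for affine functions on comparable regions gives $\|l_w - l_u\|_{L_\infty(Q_w)} \lesssim \lambda \frac{\delta_z(Q_w)}{\delta_x(Q_w)}$. Taking the sup over the finitely-or-countably-many pieces $u$ yields $\|g_{S'} - g_S\|_{L_\infty(Q_w)} \lesssim \lambda \frac{\delta_z(Q_w)}{\delta_x(Q_w)}$.

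**Main obstacle.** The delicate point is the containment $Q_w \subset 10 Q_u$ when $Q_u$ sits near the top or bottom of a tall stack inside $Q_w$ — a priori this fails if $Q_w$ is extremely tall relative to $\delta_x(Q_w)$. The resolution must use that $w \in \cVh(\Delta)$ in a PSFC, so by Lemma~\ref{lem:aspect-bounds} $\alpha(Q_w) \ge \min\{\alpha(Q), \alpha_{\min}/4\}$, which bounds $\delta_z(Q_w)/\delta_x(Q_w)^2$ from above; combined with the affine ($L_1 \to L_\infty$) comparison being insensitive to which piece $u$ we pick (we only ever evaluate on $Q_w$ itself, and $10 Q_u$ for $u$ an $h$-descendant automatically contains $Q_w$ because horizontal cuts preserve width and the dilation factor $10$ grows the height quadratically while the stack height is linear in $\delta_z(Q_w)$), the containment follows. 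I would state this containment as a short preliminary observation, citing Lemma~\ref{lem:child-hierarchy} and the width-preservation of horizontal cuts, before running the three-line estimate above.
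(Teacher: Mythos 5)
Your reduction of the problem to bounding $\|l_w-l_u\|_{L_\infty}$ for $u$ a minimal vertex of $D^h(w)$ is fine, but the step you yourself flag as delicate is where the argument breaks: the containment $Q_w\subset 10Q_u$ is simply false for deep $h$--descendants, and your proposed resolution rests on a miscalculation. Recall that $\rho Q_u$ has height $\rho^2\delta_z(Q_u)$, so $10Q_u$ has height $100\,\delta_z(Q_u)$; but a minimal element $u$ of $D^h(w)$ can lie $k$ generations below $w$ for arbitrarily large $k$, and since each horizontal cut halves the height, $\delta_z(Q_u)=2^{-k}\delta_z(Q_w)$, so $100\,\delta_z(Q_u)\ll\delta_z(Q_w)$. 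The aspect--ratio lower bound of Lemma~\ref{lem:aspect-bounds} does not limit the depth of $D^h(w)$ (horizontal cuts \emph{increase} the aspect ratio by $\sqrt2$, so they can continue indefinitely), and Lemma~\ref{lem:child-hierarchy} gives the nesting in the wrong direction ($r'Q_u\subset r'Q_w$, not $Q_w\subset 10Q_u$). Even setting the containment aside, a direct comparison cannot give the stated bound: the $L_1$ control $\|l_w-f\|_{L_1(10Q_w)}\lesssim\lambda\frac{\delta_z(Q_w)}{\delta_x(Q_w)}|Q_w|$ does not localize, so on a thin slab $Q_u$ the affine $L_1\to L_\infty$ comparison yields at best $\lambda\frac{\delta_z(Q_w)}{\delta_x(Q_w)}\cdot\frac{|Q_w|}{|Q_u|}$, and $|Q_w|/|Q_u|\approx\delta_z(Q_w)/\delta_z(Q_u)$ is unbounded. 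This is why Lemma~\ref{lem:child differences} works only for a parent/child pair, where the two pseudoquads have comparable dimensions.

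The paper's proof avoids this by telescoping generation by generation: it sets $D^h_i(w)=D^h(w)\cap\cC^i(w)$ and $h_i=g_{S\cup D^h_{i}(w)}$, applies Lemma~\ref{lem:child differences} to each parent/child step (where the needed containment does hold), and uses that horizontal cuts preserve width while the heights decay geometrically (Lemma~\ref{lem:soft cut props}), so
$\|h_i-h_{i+1}\|_{L_\infty(Q_w)}\lesssim\lambda\bigl(\tfrac34\bigr)^i\frac{\delta_z(Q_w)}{\delta_x(Q_w)}$;
summing the geometric series and passing to the limit via the a.e.\ convergence of Lemma~\ref{lem:coherent convergence} gives the bound for $g_{S'}$. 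To repair your write-up you would need to replace the single comparison ``$l_w$ vs.\ $l_u$ on $10Q_u$'' by a sum of comparisons along the chain of ancestors between $w$ and $u$, which is exactly the paper's argument; as written, the proposal has a genuine gap.
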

\begin{proof}
  It follows from the definitions that $g_S$ and $g_{S'}$ agree outside of $Q_w$. 
  For $i\ge 0$, let $D^h_{i}(w) =D^h(w) \cap \cC^{i}(w)$,
  $D^h_{\le i}(w) = D^h(w) \cap \cC^{\le i}(w)$, and $h_i=g_{S\cup D^h_i(w)}$. Then $h_0=g_{S}$. By Lemma~\ref{lem:coherent convergence}, $h_i$ converges to $g_S$ pointwise almost everywhere.

  Every pseudoquad in $D^h(w)$ has width $\delta_x(Q_w)$, and by Lemma~\ref{lem:soft cut props}, for any $i\ge 0$ and $v\in D^h_i(w)$, 
  $$\delta_z(Q_v)\le \left(\frac{3}{4}\right)^i \delta_z(Q_w).$$
  By Lemma~\ref{lem:child differences}, 
  $$\|h_i - h_{i+1} \|_{L_\infty(Q)} \lesssim \lambda \left(\frac{3}{4}\right)^i \frac{\delta_z(Q_w)}{\delta_x(Q_w)}.$$
  Thus
  $$\|g_{S}- g_{S'}\|_{L_\infty(Q)} \le \lim_i \|h_i - h_0\|_{L_\infty(Q)} \lesssim \sum_{i=0}^\infty \lambda \left(\frac{3}{4}\right)^i \frac{\delta_z(Q_w)}{\delta_x(Q_w)}.$$
\end{proof}

We will use these bounds let us construct a sequence of approximations of $f$ that converge in $L_p$.
For $v\in \cV(\Delta)$ and $j\ge 0$, let
\begin{equation}\label{eq:def-Rj}
  R_j(v)=\left\{w\in \cD(v)\mid \delta_x(Q_w)\ge 2^{-j}\delta_x(Q_v) \right\}.
\end{equation}
By Lemma~\ref{lem:coherent convergence}, the $g_{R_j(v)}$ converge to $f$ in $L_1(Q_v)$; we will show that they converge in $L_p(Q_v)$ too.

We will need to estimate the weight of the descendants of $v$. Let $P_j(v)=\min R_j(v)$.  Then
$$P_j(v)=\left\{w\in \cDv(v)\mid \delta_x(Q_w)=2^{-j}\delta_x(Q_v) \right\}.$$

The following Vitali-type covering lemma is proved in \cite{NY-Foliated}. (In \cite{NY-Foliated}, it is stated for $\frac{1}{32r^2}$--rectilinear foliated patchworks, but all PSFCs are $\frac{1}{32r^2}$--rectilinear.)
\begin{lemma}[{\cite[Lem.\ 9.4]{NY-Foliated}}]
  \label{lem:grid coverings}
  Let $\Delta$ be a PSFC. For any $j\ge 0$, there is a subset $V_j(v)\subset P_j(v)$ such that the sets $r Q_w, w\in V_j(v)$ are pairwise disjoint, and $W(V_j(v))\approx W(Q_j(v))$.  It follows that
  \begin{equation}\label{eq:grid coverings}
    W(\cDv(v)) \approx_r \sum_j W(V_j(v)).
  \end{equation}
\end{lemma}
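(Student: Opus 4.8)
The plan is to prove the single-scale assertion first --- that for each fixed $j\ge 0$ there is a set $V_j(v)\subseteq P_j(v)$ whose $r$-dilates $\{rQ_w\}_{w\in V_j(v)}$ are pairwise disjoint and which satisfies $W(V_j(v))\approx_r W(P_j(v))$ --- and then to read off the displayed identity. The deduction is immediate: vertical cuts halve $\delta_x$ while horizontal cuts preserve it, and $\delta_x(Q_w)\le\delta_x(Q_v)$ for every $w\le v$, so each $w\in\cDv(v)$ has $\delta_x(Q_w)=2^{-j}\delta_x(Q_v)$ for exactly one $j$; hence $\cDv(v)=\bigsqcup_{j\ge0}P_j(v)$ and, using $c\,W(P_j(v))\le W(V_j(v))\le W(P_j(v))$ term by term, $W(\cDv(v))=\sum_j W(P_j(v))\approx_r\sum_j W(V_j(v))$. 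Note also that the weighted Carleson condition (Definition~\ref{def:weighted Carleson}) gives $W(P_j(v))\le W(\cDv(v))=W(\{w\in\cVv(\Delta)\mid w\le v\})\lesssim|Q_v|<\infty$, which we use for bookkeeping below.

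Fix $j$ and set $w_0=2^{-j}\delta_x(Q_v)$, so every $Q_w$ with $w\in P_j(v)$ has width $w_0$; since vertical cuts bisect the base, its base is a dyadic subinterval of $I_v$ of length $w_0$, no two members of $P_j(v)$ are comparable in $\Delta$, and two of them sharing a base have disjoint interiors and form part of a vertical stack (their common ancestor must be horizontally cut, since a vertical cut would split the base). Now run the standard greedy, Vitali-type selection: using $\sum_{w\in P_j(v)}\delta_z(Q_w)^3\approx w_0^3\,W(P_j(v))<\infty$, enumerate $P_j(v)=\{w_1,w_2,\dots\}$ with $\delta_z(Q_{w_1})\ge\delta_z(Q_{w_2})\ge\cdots$; put $w_1$ into $V_j(v)$, and for $n\ge2$ put $w_n$ into $V_j(v)$ iff $rQ_{w_n}$ is disjoint from $rQ_{w'}$ for every $w'$ already chosen. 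The chosen $r$-dilates are pairwise disjoint by construction, and every $w_n\notin V_j(v)$ satisfies $rQ_{w_n}\cap rQ_{w'}\ne\emptyset$ for some chosen $w'$ with $\delta_z(Q_{w'})\ge\delta_z(Q_{w_n})$. Setting $B(w')=\{w\in P_j(v)\mid\delta_z(Q_w)\le\delta_z(Q_{w'}),\ rQ_w\cap rQ_{w'}\ne\emptyset\}$ we obtain $W(P_j(v))\le W(V_j(v))+\sum_{w'\in V_j(v)}W(B(w'))$, so it suffices to prove the bounded-overlap estimate $W(B(w'))\lesssim_r W(\{w'\})$.

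To prove that, note first that $rQ_w\cap rQ_{w'}\ne\emptyset$ forces the $r$-enlarged bases to meet; as these are $r$-enlargements of length-$w_0$ dyadic intervals at a common level, only $O(r)$ columns contribute, and it is enough to bound a single column $J$. Using the description $\rho Q=\{(x,z)\mid x\in\rho I,\ |z-\tfrac12(h_1(x)+h_2(x))|\le\tfrac12\rho^2\delta_z(Q)\}$, the set $rQ_w$ is the $r^2\delta_z(Q_w)$-thick vertical neighbourhood of its centre parabola $p_w$, so $rQ_w\cap rQ_{w'}\ne\emptyset$ means $|p_w-p_{w'}|\le r^2\delta_z(Q_{w'})$ somewhere on $rJ$. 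The bounding curves of the $Q_w$ are characteristic curves of $\Gamma$ through points of $Q_v$; by Proposition~\ref{prop:Omega-control}(2) (via Lemma~\ref{lem:full approximating planes}) they stay close to the parabolas $p_w$, and by Lemma~\ref{lem:intrinsic-lipschitz-cc} together with \eqref{eq:lipschitz-vertical} two characteristic curves of $\Gamma$ can separate only slowly. Combining these confines the members of $B(w')$ in column $J$ to a controlled vertical slab. Grouping those members by dyadic height scale, $\delta_z(Q_w)\in(2^{-k-1}\delta_z(Q_{w'}),2^{-k}\delta_z(Q_{w'})]$, the disjoint-stack property permits only $O_r(2^k)$ of them at scale $k$, each of weight $\approx(2^{-k}\delta_z(Q_{w'}))^3 w_0^{-3}$ by \eqref{eq:weight deltax}; summing the geometric series $\sum_{k\ge0}2^k\cdot 2^{-3k}$ bounds their total weight by $\lesssim_r\delta_z(Q_{w'})^3 w_0^{-3}\approx W(\{w'\})$, and summing over the $O(r)$ columns gives the estimate.

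The main obstacle is exactly this geometric confinement step: turning ``$rQ_w\cap rQ_{w'}\ne\emptyset$'' into a bound on where $Q_w$ can sit that is usable in the weighted count above. The difficulty is that pseudoquads are not axis aligned, their slopes have no a priori bound, and the enlarged base $rJ$ is strictly larger than the interval $4I$ on which the rectilinearity and approximating-plane estimates of Section~\ref{sec:prelim} are stated, so each centre parabola $p_w$ may individually wander far over $rJ$. The key point is that within a column the relevant parabolas wander essentially together: when $Q_w$ and $Q_{w'}$ share a recent horizontally cut ancestor they inherit a common approximating plane and hence nearly parallel centre parabolas, and in general $p_w-p_{w'}$ is a quadratic of small curvature because the two bounding characteristic curves of $\Gamma$ are $O(\delta_z(Q_v))$-close and two such curves have relative tilt $O(\sqrt{\delta_z(Q_v)})$ by \eqref{eq:lipschitz-vertical}. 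Making this quantitative, balancing it against the disjoint-stack count (and, if needed, distinguishing the regimes where $Q_{w'}$ is tall versus short relative to $Q_v$), possibly after a shear normalizing $\slope(Q_v)$ and using the nesting $rQ_w\subset rQ_v$ from Lemma~\ref{lem:child-hierarchy} to stay inside $rQ_v$, is the technical heart of the argument; the remainder is the weighted Vitali bookkeeping carried out above.
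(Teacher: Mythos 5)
First, a point of comparison: the paper does not prove this statement at all — it is imported verbatim from \cite[Lem.~9.4]{NY-Foliated}, with only the parenthetical remark that the hypothesis there ($\tfrac{1}{32r^2}$--rectilinearity) is satisfied by every PSFC. So your proposal is an attempt to reconstruct the external proof from scratch. Your reduction of \eqref{eq:grid coverings} to the single-scale claim is correct ($\cDv(v)$ is the disjoint union of the $P_j(v)$ because widths are exactly the dyadic multiples $2^{-j}\delta_x(Q_v)$ and only vertical cuts change them), and the greedy Vitali bookkeeping is set up correctly: disjointness is by construction, $W(V_j(v))\le W(P_j(v))$ is trivial, and the reverse inequality reduces to the overlap bound $W(B(w'))\lesssim_r W(\{w'\})$.

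The genuine gap is that this overlap bound — which is the entire geometric content of the lemma — is not proved; you explicitly defer the ``confinement'' step and only list ingredients, and the route you sketch does not obviously close. Concretely: all the control you invoke (rectilinearity \eqref{eq:def rectilinear 1 and 2}, Lemma~\ref{lem:full approximating planes}/Proposition~\ref{prop:Omega-control}, Lemma~\ref{lem:intrinsic-lipschitz-cc}) holds only on $4I_w$, whereas $rQ_w$ lives over the base enlarged by the universal factor $r>10$; propagating two centre parabolas (or two characteristic curves of $\Gamma$) across an interval of length $rw_0$ costs an additive drift of order $r^2w_0^2$ (via \eqref{eq:lipschitz-vertical}, \eqref{eq:g-linearize} and the slope bound of Lemma~\ref{lem:parabolic-bound}, whose $\alpha(Q)^{-2}$ term you cannot drop). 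Hence the vertical slab to which you confine the members of $B(w')$ in a column has thickness of order $r^2\bigl(\delta_z(Q_{w'})+w_0^2\bigr)$, not $r^2\delta_z(Q_{w'})$. Running your stacking count with this slab yields $W(B(w'))\lesssim_r \bigl(1+\alpha(Q_{w'})^2\bigr)W(\{w'\})$, and in a PSFC the vertices of $P_j(v)$ are vertically cut, which by Lemma~\ref{lem:low aspect pseudoquads} forces their aspect ratios to be bounded \emph{below} by $\alpha_{\mathrm{min}}$ but gives no upper bound (horizontal cuts multiply $\alpha$ by $\sqrt 2$ and may repeat arbitrarily often), so the factor $\alpha(Q_{w'})^2$ is genuinely unbounded. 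Your closing paragraph acknowledges this is ``the technical heart,'' but without an argument that removes this loss (or a different selection/counting scheme, as in the actual proof of \cite[Lem.~9.4]{NY-Foliated}), the inequality $W(V_j(v))\gtrsim_r W(P_j(v))$ — and with it the lemma — is not established.
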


This lets us bound $W(P_i(v))$ for $v\in \cVh(\Delta)$. 
\begin{cor}\label{cor:vitali weights}
  Let $\Delta$ be a PSFC and let $v\in \cVh(\Delta)$. For all $i\ge 0$,
  $$W(P_i(v)) \lesssim 2^i \alpha(Q_v)^{-4} |Q_v| = 2^i W(\{v\}).$$
\end{cor}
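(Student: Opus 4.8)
The plan is to bound $W(P_i(v))$ by comparing the pseudoquads in $P_i(v)$ to the vertically-cut structure counted by the weighted Carleson condition, using the Vitali-type covering of Lemma~\ref{lem:grid coverings} as the crucial intermediate step. Since $v \in \cVh(\Delta)$, the weighted Carleson condition \eqref{eq:def weighted Carleson} gives $W(\{w\in \cVv(\Delta) \mid w\le v\}) \le D_0 |Q_v|$. The set $P_i(v)$, however, consists of elements $w \in \cDv(v)$ with $\delta_x(Q_w) = 2^{-i}\delta_x(Q_v)$, so every $w \in P_i(v)$ is a vertically-cut vertex that is a descendant of $v$; hence $P_i(v) \subset \cDv(v)$, and moreover every $w\in P_i(v)$ is an \emph{ancestor} of (or equal to) a vertically-cut vertex counted in the Carleson sum. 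But $W(P_i(v))$ is not directly controlled by the Carleson sum because distinct elements of $P_i(v)$ can both be ancestors of the same vertically-cut leaf, so we cannot simply invoke \eqref{eq:def weighted Carleson}. This is where Lemma~\ref{lem:grid coverings} enters.

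Concretely, I would first apply Lemma~\ref{lem:grid coverings} with $j=i$ to obtain $V_i(v)\subset P_i(v)$ such that the dilated pseudoquads $rQ_w$, $w\in V_i(v)$, are pairwise disjoint and $W(V_i(v)) \approx W(P_i(v))$. It then suffices to bound $W(V_i(v))$. For each $w\in V_i(v)$, recall from \eqref{eq:weight deltax} that $W(\{w\}) = \alpha(Q_w)^{-4}|Q_w| \approx \delta_z(Q_w)^3/\delta_x(Q_w)^3$. Now $\delta_x(Q_w) = 2^{-i}\delta_x(Q_v)$ for all $w\in P_i(v)$, and since every $w\in\cDv(v)$ is obtained from $v$ by cutting, $\delta_z(Q_w)\le \delta_z(Q_v)$ (heights only shrink under cuts). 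Therefore
\begin{equation*}
  W(\{w\}) \approx \frac{\delta_z(Q_w)^3}{\delta_x(Q_w)^3} \le \frac{\delta_z(Q_v)^3}{2^{-3i}\delta_x(Q_v)^3} = 2^{3i} \frac{\delta_z(Q_v)^3}{\delta_x(Q_v)^3} \approx 2^{3i} W(\{v\}).
\end{equation*}
This bound alone is too crude (it would give $2^{3i}$ times the number of elements of $V_i(v)$), so we instead combine it with a count of $|V_i(v)|$. Using the disjointness of the $rQ_w$, $w\in V_i(v)$, together with the volume bound $|rQ_w|\approx r^3 |Q_w| \approx r^3 \delta_x(Q_w)\delta_z(Q_w)$ from Lemma~\ref{lem:soft cut props}, and the fact that all the $rQ_w\subset rQ_v$ by Lemma~\ref{lem:child-hierarchy}, we get $\sum_{w\in V_i(v)} |Q_w| \lesssim_r |Q_v|$.

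The key balancing step is then: each $w\in V_i(v)$ has $\delta_x(Q_w) = 2^{-i}\delta_x(Q_v)$, so $|Q_w|\approx \delta_x(Q_w)\delta_z(Q_w) = 2^{-i}\delta_x(Q_v)\delta_z(Q_w)$, and hence $W(\{w\}) \approx \delta_z(Q_w)^3/\delta_x(Q_w)^3 = \delta_z(Q_w)\cdot\delta_z(Q_w)^2/\delta_x(Q_w)^3$. Writing $W(\{w\})$ in terms of $|Q_w|$: from $\delta_z(Q_w) \approx |Q_w|/\delta_x(Q_w) = 2^i |Q_w|/\delta_x(Q_v)$, we get
\begin{equation*}
  W(\{w\}) \approx \frac{\delta_z(Q_w)^2}{\delta_x(Q_w)^2}\,\delta_z(Q_w)\cdot\frac{1}{\delta_x(Q_w)} = \alpha(Q_w)^{-2}\cdot \frac{\delta_z(Q_w)}{\delta_x(Q_w)}\cdot\frac{\delta_z(Q_w)}{\delta_x(Q_w)}\cdot|Q_w|\cdot|Q_w|^{-1}\delta_x(Q_w)^{-1}\delta_z(Q_w)^{-1}|Q_w| ,
\end{equation*}
which is getting unwieldy; the clean route is to observe directly that $W(\{w\}) = \alpha(Q_w)^{-4}|Q_w|$ with $\alpha(Q_w) = \delta_x(Q_w)/\sqrt{\delta_z(Q_w)} \ge \delta_x(Q_w)/\sqrt{\delta_z(Q_v)} = 2^{-i}\alpha(Q_v)$, so $\alpha(Q_w)^{-4}\le 2^{4i}\alpha(Q_v)^{-4}$, giving $W(\{w\})\le 2^{4i}\alpha(Q_v)^{-4}|Q_w|$. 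Summing and using $\sum_{w\in V_i(v)}|Q_w|\lesssim |Q_v|$ would yield $2^{4i}$, off by a factor $2^{3i}$. The extra savings must come from the fact that the \emph{number} of disjoint $rQ_w$ inside $rQ_v$ with the given dimensions is small: $|V_i(v)| \lesssim r^{-3}|rQ_v|/\min_w|rQ_w|$, and combining the two estimates optimally (a pigeonhole/dyadic decomposition of $V_i(v)$ according to the value of $\delta_z(Q_w)$) recovers the sharp $2^i$.

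The main obstacle I expect is precisely this last optimization: getting $2^i$ rather than a larger power. The honest approach is to stratify $V_i(v)$ by the dyadic size of $\delta_z(Q_w)$, say $\delta_z(Q_w)\approx 2^{-k}\delta_z(Q_v)$ for $k\ge 0$; within each stratum the pseudoquads have comparable dimensions $2^{-i}\delta_x(Q_v)\times 2^{-k}\delta_z(Q_v)$, disjointness of the $rQ_w$ inside $rQ_v$ bounds the count of the stratum by $\lesssim_r 2^{i+k}$, and each contributes weight $\approx (2^{-2i}2^{k}\alpha(Q_v)^{-2})^2|Q_v|$-type quantity; one checks the stratum-$k$ total weight is $\lesssim 2^{i}\cdot 2^{-k}W(\{v\})$ so the geometric series in $k$ sums to $\lesssim 2^i W(\{v\})$. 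I would carry out this dyadic decomposition carefully, keeping track of the implicit constants' dependence on $r$ (which is universal) and on the rectilinearity constants via Lemma~\ref{lem:soft cut props}, and conclude $W(P_i(v))\approx W(V_i(v))\lesssim 2^i W(\{v\}) = 2^i\alpha(Q_v)^{-4}|Q_v|$.
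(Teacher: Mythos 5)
Your setup is fine as far as it goes (reducing to the Vitali family $V_i(v)$ of Lemma~\ref{lem:grid coverings}, disjointness of the $rQ_w$ inside $rQ_v$ via Lemma~\ref{lem:child-hierarchy}), but the decisive step is wrong, and no argument of the kind you propose can close it. Your stratification by $\delta_z(Q_w)\approx 2^{-k}\delta_z(Q_v)$ gives, per stratum, at most $\lesssim 2^{i+k}$ pseudoquads (from the volume count) each of weight $\approx 2^{3(i-k)}W(\{v\})$, hence a stratum total of $\approx 2^{4i-2k}W(\{v\})$, not the $2^{i-k}W(\{v\})$ you assert; summing in $k$ recovers only the crude $2^{4i}W(\{v\})$ bound you were trying to beat. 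Moreover, the shortfall is not a matter of optimizing constants: purely combinatorially, a rectilinear foliated patchwork in which the descendants of $v$ are vertically cut for $i$ consecutive generations would have $W(P_i(v))\approx 2^{4i}W(\{v\})$, so the statement is simply false for general patchworks and must use the paramonotone-stopping property of a PSFC, which your argument never invokes. (Your opening observation that the weighted Carleson condition gives only $D_0|Q_v|$ is correct, but note the elements of $P_i(v)$ are themselves vertically cut, so there is no double-counting issue; the problem is that $D_0|Q_v|$ is the wrong shape of bound, not that the condition fails to apply.)

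The paper's proof is short once the right input is used: since each $w\in P_i(v)$ is vertically cut, $\Gamma$ is \emph{not} $(\eta,R)$--paramonotone on $rQ_w$, so by \eqref{eq:def-paramonotone} the nonmonotonicity measure at scale $R\delta_x(Q_w)=2^{-i}R\delta_x(Q_v)$ satisfies $\Omega^P_{\Gamma^+,2^{-i}R\delta_x(Q_v)}(rQ_w)\ge \eta\,W(\{w\})$. Summing this over the disjoint Vitali family $V_i(v)$ (whose dilates lie in $rQ_v$ by Lemma~\ref{lem:child-hierarchy}) bounds $W(P_i(v))\approx W(V_i(v))$ by $\eta^{-1}\Omega^P_{\Gamma^+,2^{-i}R\delta_x(Q_v)}(rQ_v)$; the scale-change inequality \eqref{eq:Omega length change} then costs exactly the factor $2^i$, and finally the fact that $v\in\cVh(\Delta)$ is horizontally cut, hence $(\eta,R)$--paramonotone on $rQ_v$, gives $\Omega^P_{\Gamma^+,R\delta_x(Q_v)}(rQ_v)\le \eta\,\alpha(Q_v)^{-4}|Q_v|$. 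So the additivity of $\Omega^P$ over the disjoint $rQ_w$'s replaces your volume count, and paramonotonicity of the root replaces your aspect-ratio comparison; these are the missing ingredients in your proposal.
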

\begin{proof}
  For every $w\in P_i(v)$, the pseudoquad $Q_w$ is vertically cut, so $\Gamma$ is not $(\eta, R)$--paramonotone on $rQ_v$.  That is,
  $$\Omega^P_{\Gamma^+,2^{-i} R\delta_x(Q_v)}(r Q_w) \ge \eta \alpha(Q_w)^{-4}|Q_w|=\eta W(\{Q_w\}).$$

  Then, by Lemma~\ref{lem:grid coverings},
  $$W(P_i(v))\approx W(V_i(v)) \le \eta^{-1}\sum_{w\in V_i(v)} \Omega^P_{\Gamma^+,2^{-i} R\delta_x(Q_v)}(r Q_w).$$
  The $rQ_w$'s are disjoint and, by Lemma~\ref{lem:child-hierarchy}, they are contained in $r Q_v$, so 
  \begin{align*}
    W(P_i(v))
    &\lesssim \eta^{-1} \Omega^P_{\Gamma^+,2^{-i} R\delta_x(Q_v)}(r Q_v) \\ 
    &\stackrel{\eqref{eq:Omega length change}}{\le} 2^i \eta^{-1} \Omega^P_{\Gamma^+,R\delta_x(Q_v)}(r Q_v).
  \end{align*}

  Since $Q_v$ is horizontally cut, $\Gamma$ is $(\eta, R)$--paramonotone on $rQ_v$, so
  $$\Omega^P_{\Gamma^+,R\delta_x(Q_v)}(r Q_v) \le \eta \alpha(Q_v)^{-4}|Q_v|$$
  and thus
  $$W(P_i(v)) \lesssim 2^i \alpha(Q_v)^{-4}|Q_v|,$$
  as desired. 
\end{proof}

This leads to the desired $L_p$ bounds.
\begin{proof}[{Proof of Proposition~\ref{prop:Lp approximating planes}}]
  Let $\Delta$ be a PSFC and let $D_0$, $\mu$, $\lambda$, $\eta$, $R$, and $r$ be the parameters in Definition~\ref{def:coronization}. Let $2\le p < 5$.

  Let $v\in \cVh(\Delta)$.  Let $Q=Q_v$.  Let $w=\delta_x(Q)$, $h=\delta_z(Q)$.  (In fact, the problem has enough symmetry that it suffices to prove the proposition when $\delta_x(Q)=\delta_z(Q)=\alpha(Q)=1$.) For $i\ge 0$, let $R_i = \{u\in \cD(v)\mid \delta_x(Q_u)\ge 2^{-i}w\}$ as in \eqref{eq:def-Rj}. We consider the approximations $g_{R_i}$.

  By Lemma~\ref{lem:coherent convergence}, the functions $g_{R_i}$ converge pointwise to $f$, so by Fatou's Lemma,
  $$\|f-l_v\|_{L_p(Q)} \le \mathop{\lim \inf}_i \|g_{R_i}-l_v\|_{L_p(Q)} \\ \le \|g_{R_0}-l_v\|_{L_p(Q)} + \mathop{\lim \inf}_i \|g_{R_i}-g_{R_0}\|_{L_p(Q)}.$$
    
  First, note that $D^h(v) = R_0$, so by Lemma~\ref{lem:horizontal differences},
  $$\|g_{R_0}-l_{v}\|_{L_\infty(Q)} = \|g_{R_0}-g_{\{v\}}\|_{L_\infty(Q)} \lesssim \lambda \cdot \frac{h}{w},$$
  so
  $$\|g_{R_0}-l_v\|_{L_p(Q)} \lesssim \lambda^p \frac{h^p}{w^p}|Q|.$$

  Likewise, for every $u\in \mathsf{min}(R_i)$, Lemma~\ref{lem:horizontal differences} implies that
  $$\|g_{R_{i+1}}-g_{R_i}\|_{L_\infty(Q_u)} \lesssim \lambda \frac{\delta_z(Q_u)}{\delta_x(Q_u)}.$$
  Since $\delta_x(Q_u)=2^{-i}w$ for all $u\in \mathsf{min}(R_i)$,
  \begin{equation}\label{eq:step-bound}
    \|g_{R_{i+1}}-g_{R_i}\|_{L_p(Q)}^p \lesssim \sum_{u\in \mathsf{min}(R_i)} \lambda^p \frac{\delta_z(Q_u)^p}{\delta_x(Q_u)^p}|Q_u| \\ \approx \lambda^p w^{-p+1} 2^{i(p-1)}  \sum_{u\in \mathsf{min}(R_i)} \delta_z(Q_u)^{p+1}.
  \end{equation}

  We bound the $\delta_z(Q_u)$'s using Corollary~\ref{cor:vitali weights}. For all $i\ge 0$, \eqref{eq:weight deltax} implies that 
  $$W(\mathsf{min}(R_i)) \approx 2^{3i}w^{-3} \sum_{u\in \mathsf{min}(R_i)} \delta_z(Q_u)^3.$$
  Then, by Corollary~\ref{cor:vitali weights},
  $$\sum_{u\in \mathsf{min}(R_i)} \delta_z(Q_u)^3 \approx  2^{-3i}w^{3}  W(\mathsf{min}(R_i))  \lesssim 2^{-3i}w^3 \cdot 2^i \alpha(Q)^{-4} |Q| \stackrel{\eqref{eq:weight deltax}}{\approx} 2^{-2i} h^3.$$
  Since $p \ge 2$, convexity implies
  $$\sum_{u\in \mathsf{min}(R_i)} \delta_z(Q_u)^{p+1} \le \left(\sum_{u\in \mathsf{min}(R_i)} \delta_z(Q_u)^3\right)^{\frac{p+1}{3}}\lesssim 2^{-2i \cdot \frac{p+1}{3}} h^{p+1}.$$
  Then
  \begin{align*}
    \|g_{R_{i+1}}-g_{R_i}\|_{L_p(Q)}^p 
    &\lesssim \lambda^p  w^{-p+1} 2^{i(p-1)} \cdot 2^{-2i \cdot \frac{p+1}{3}} h^{p+1} \\
%    &= \lambda^p  w^{-p+1} h^{p+1} 2^{i(p-1)} 2^{-\frac{2ip}{3}-\frac{2i}{3}} h^{p+1} \\ 
    &=\lambda^p w^{-p+1} h^{p+1}  2^{\frac{ip}{3}-\frac{5i}{3}},
  \end{align*}
  so by \eqref{eq:step-bound},
  $$\|g_{R_{i+1}}-g_{R_i}\|_{L_p(Q)} \lesssim \lambda 2^{i\cdot \frac{p-5}{3p}} \frac{\delta_z(Q)}{\delta_x(Q)}|Q|^{\frac{1}{p}}.$$
  Since $p<5$, this decays exponentially in $i$, so for any $n$,
  \begin{align*}
    \|g_{R_n}-l_v\|_{L_p(Q)} 
    &\le \|g_{R_0}-l_v\|_{L_p(Q)}  + \sum_{i=0}^j \|g_{R_{i+1}}-g_{R_i}\|_{L_p(Q)} \\ 
    &\lesssim \frac{1}{1-2^{\frac{p-5}{3}}} \cdot \lambda \cdot \frac{\delta_z(Q)}{\delta_x(Q)} |Q|^{\frac{1}{p}} \\
    & \approx \frac{\lambda}{5-p}\cdot\frac{\delta_z(Q)}{\delta_x(Q)} |Q|^{\frac{1}{p}}.
  \end{align*}
  Therefore, when $v\in \cVh(\Delta)$,
  $$\|f-l_v\|_{L_p(Q)} \lesssim \frac{1}{5-p} \lambda \frac{\delta_z(Q)}{\delta_x(Q)} |Q|^{\frac{1}{p}}.$$

  When $v\in \cVv(\Delta)$, we proceed similarly to Lemma~\ref{lem:full approximating planes}. Let $m\in \cVh(\Delta)$ be the minimal horizontally-cut ancestor of $v$. Then $l_m=l_v$, $\delta_z(Q_m)\approx \delta_z(Q_v)$, and $\delta_x(Q_m)\gtrsim \delta_x(Q_v)$. By the horizontally-cut case,
  $$\|f-l_m\|_{L_p(Q_m)} \lesssim \lambda \frac{1}{5-p} \frac{\delta_z(Q_m)}{\delta_x(Q_m)} |Q_m|^{\frac{1}{p}}.$$
  Since $p\ge 1$, we have
  $$\frac{\delta_z(Q_m)}{\delta_x(Q_m)} |Q_m|^{\frac{1}{p}} \approx \frac{\delta_z(Q_m)^{\frac{p+1}{p}}}{\delta_x(Q_m)^{\frac{p-1}{p}}} % \lesssim \frac{\delta_z(Q_v)^{\frac{p+1}{p}}}{\delta_x(Q_v)^{\frac{p-1}{p}}}
  \lesssim \frac{\delta_z(Q_v)}{\delta_x(Q_v)} |Q_v|^{\frac{1}{p}},$$
  so
  $$\|l_v-f\|_{L_p(Q_v)}
    \le \|l_m-f\|_{L_p(Q_m)} 
%    \lesssim \lambda \frac{1}{5-p} \frac{\delta_z(Q_m)}{\delta_x(Q_m)} |Q_m|^{\frac{1}{p}} \\
    \lesssim \lambda \frac{1}{5-p} \frac{\delta_z(Q_v)}{\delta_x(Q_v)} |Q_v|^{\frac{1}{p}}.$$
  This proves the proposition for $2\le p < 5$. If $p=1+\theta$ for $\theta\in (0,1)$, then by Lyapunov's inequality and \eqref{eq:def lambda approximating},
  \begin{multline*}
    \|l_v-f\|_{L_p(Q_v)} \le \|l_v-f\|_{L_1(Q_v)}^{\frac{1-\theta}{p}}\|l_v-f\|_{L_2(Q_v)}^{\frac{2\theta}{p}}\\
    \lesssim \left(\lambda \frac{\delta_z(Q_v)}{\delta_x(Q_v)} |Q_v|\right)^{\frac{1-\theta}{p}}\left(\lambda \frac{\delta_z(Q_v)}{\delta_x(Q_v)} |Q_v|^{\frac{1}{2}}\right)^{\frac{2\theta}{p}} = 
    \lambda \frac{\delta_z(Q_v)}{\delta_x(Q_v)} |Q_v|^{\frac{1}{p}},
  \end{multline*}
  so the proposition holds for $p\in [1,5)$.
\end{proof}

\section{Proof of Proposition \ref{prop:Q-carleson}}\label{sec:proof}

In this section, we will prove Proposition~\ref{prop:Q-carleson}. Recall that the proposition states that there is $\tau>0$ depending only on the intrinsic Lipschitz constant $L$ of $\Gamma=\Gamma_f$ such that if $\alpha_0$ is sufficiently small (again depending on $L$) and $Q$ is a rectilinear pseudoquad with $\alpha(Q) = \alpha_0$, then
\begin{equation}\label{eq:prop-repeat}
  \int_{\frac{1}{3}Q} \int_0^{\tau \delta_x(Q)} \gamma_{4,f}(x,r)^4 \frac{\ud r}{r} \ud x \lesssim |Q|
\end{equation}
where $\gamma_{4,f}(x,r) = r^{-\frac{7}{4}} \inf_{h\in \Aff} \|f - h \|_{L_4(V(\Psi_f(x),r))}$.
Most of the implicit constants in this section depend on the intrinsic Lipschitz constant of $\Gamma$, so we will write $\lesssim$ for $\lesssim_L$ and so on for brevity.

% In this section, we will prove Theorem~\ref{thm:beta4}. By Lemma~\ref{lem:SAR-conseq}, any ball $V(x,r)$ is contained in a rectilinear pseudoquad of comparable size, so we can reduce Theorem~\ref{thm:mainthm} to the following inequality.

We take $\alpha_{\mathrm{min}}$ as in Lemma~\ref{lem:coronizations-exist}, let $0<\alpha_0<\alpha_{\mathrm{min}}$, and let $Q$ be a rectilinear pseudoquad with $\alpha(Q)=\alpha_0$. Since the conclusion of Proposition \ref{prop:Q-carleson} is invariant under scaling, we rescale so that  $\delta_z(Q)=1$ and $\delta_x(Q)=\alpha(Q)<1$. By Lemma~\ref{lem:coronizations-exist}, there is a PSFC with root $Q$, which we call $\Delta$. 

We prove Proposition~\ref{prop:Q-carleson} by using this PSFC to construct a sequence of approximations of $f$.
For $i\ge 0$, let 
\begin{equation}\label{eq:def-Si}
  S_i:=\{w\in \cV(\Delta)\mid \delta_z(Q_w)\ge 2^{-2i}\}.
\end{equation}
If $w$ and $w'$ are siblings, then $\delta_z(Q_w)= \delta_z(Q_{w'})$, and if $v$ is the parent of $w$, then $\delta_z(Q_v) \ge \delta_z(Q_{w})$, so $S_i$ is a coherent set. 
Furthermore, by Lemma~\ref{lem:aspect-bounds}, we have
\begin{equation*}
  \delta_x(Q_v)\ge \frac{\alpha_{\mathrm{min}}}{4} \sqrt{\delta_z(Q_v)} \ge \frac{\alpha_{\mathrm{min}}}{4} 2^{-i}
\end{equation*}
for all but finitely many $v\in S_i$, so $S_i$ is finite. We let $F_i:=\min(S_i)$; this is a partition of $Q$. By Lemma~\ref{lem:soft cut props}, if $\delta_z(Q_v)\ge 4\cdot2^{-2i}$, then $Q_v$'s children have height greater than $2^{-2i}$, so $v\not \in F_i$. That is,  
\begin{equation}\label{eq:Fi-heights}
  \delta_z(Q_w)\in [2^{-2i},4\cdot 2^{-2i}) \text{ for all $w\in F_i$.}
\end{equation}
For each $i$, let $g_{S_i}$ be the approximation defined in Definition~\ref{def:gS}, which agrees with $l_w$ on $Q_w$ for each $w\in F_i$.

For $x\in V_0$, let 
$$\sigma_i(x,r) = r^{-\frac{7}{4}} \inf_{h\in \Aff} \|g_{S_i} - h \|_{L_4(V(\Psi_f(x),r))}.$$
This is similar to $\gamma_{4,g_{S_i}}$ except that the norm is taken in $L_4(V(\Psi_f(x),r))$. By the triangle inequality, for any $x$ and $r$ such that $V(\Psi_f(x),r)\subset Q$,
\begin{equation}\label{eq:gamma-sigma}
  \gamma_{4,f}(x,r) \le r^{-\frac{7}{4}} \|g_{S_i} - f\|_{L_4(V(\Psi_f(x),r))} + \sigma_i(x,r).
\end{equation}

We will first state and prove bounds on the terms above, then prove Proposition~\ref{prop:Q-carleson}.
We can bound $g_{S_i} - f$ using Proposition~\ref{prop:Lp approximating planes}.
Since the $F_i$ are minimal elements of $S_i$, every element of $F_i$ is horizontally cut. Therefore, by Proposition~\ref{prop:Lp approximating planes}, 
\begin{equation}\label{eq:L4-approx}
  \|l_w - f\|_{L_4(Q_w)}\lesssim \frac{\delta_z(Q_w)}{\delta_x(Q_w)} |Q_w|^{\frac{1}{4}}
\end{equation}
for all $w\in F_i$.  Then, since  $\delta_z(Q_w)\approx 2^{-2i}$,
\begin{equation}\label{eq:L4-weights}
  \left(\frac{\|g_{S_i} - f\|_{L_4(Q)}}{2^{-i}}\right)^4 \lesssim \sum_{w\in F_i} \frac{2^{4i}\delta_z(Q_w)^4}{\delta_x(Q_w)^4} |Q_w|\approx W(F_i),
\end{equation}
where $W(F_i) = \sum_{w\in F_i} \alpha(Q_w)^{-4}|Q_w|$ is as in Definition~\ref{def:weighted Carleson}.

We can use the following lemma to bound $\sigma_i(x,r)$.
\begin{lemma} \label{lem:gamma-Si-weight}
  There exists a constant $\nu > 0$ depending only on $L$ such that $V(\Psi_f(x),\nu)\subset Q$ for all $x\in \frac{1}{3}Q$ and such that for any $i\ge 0$ such that $2^{-i}\le \frac{\delta_x(Q)}{\alpha_{\mathrm{min}}}$, we have
  \begin{align}
    \int_{\frac{1}{3}Q} \sigma_{i}\big(x, \nu 2^{-i}\big)^4 \ud x \lesssim W(F_i). \label{eq:gamma-Si-weight}
  \end{align}
\end{lemma}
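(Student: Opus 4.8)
The plan is to bound the left-hand side of \eqref{eq:gamma-Si-weight} by comparing $g_{S_i}$ to its restriction on the relevant pseudoquads, exploiting the fact that $g_{S_i}$ is \emph{piecewise affine} with pieces indexed by $F_i$. First I would choose $\nu = \nu(L)$ small enough that Lemma~\ref{lem:V-comparable} and Lemma~\ref{lem:proj-quad} guarantee $V(\Psi_f(x),\nu 2^{-i}) \subset Q$ whenever $x \in \frac{1}{3}Q$; since $\delta_z(Q) = 1$, $\delta_x(Q) = \alpha(Q) < 1$, and the parallelogram $V(\Psi_f(x),\nu 2^{-i})$ has $x$-width $2\nu 2^{-i}$ and $z$-height $2\nu^2 2^{-2i}$, this is a matter of making $\nu$ small relative to $\alpha_{\mathrm{min}}$ and the rectilinearity constants, using also that $2^{-i} \le \delta_x(Q)/\alpha_{\mathrm{min}}$. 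The key geometric point is that because each $w \in F_i$ has $\delta_z(Q_w) \approx 2^{-2i}$ (by \eqref{eq:Fi-heights}) and $\delta_x(Q_w) \ge \frac{\alpha_{\mathrm{min}}}{4}\sqrt{\delta_z(Q_w)} \approx 2^{-i}$ (by Lemma~\ref{lem:aspect-bounds}), the set $V(\Psi_f(x),\nu 2^{-i})$ has comparable or smaller dimensions than a single $Q_w$ in both directions, so for $\nu$ small it meets only a bounded number of the pieces $Q_w$, $w \in F_i$ --- in fact, with a little care, the parabolic rectangles $1Q_w$ being vertically ``taller'' than $V(\Psi_f(x),\nu 2^{-i})$ means it meets only the pieces in one or two vertical stacks of bounded size over an $x$-interval of length $\lesssim 2^{-i}$.

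Second, I would observe that on the piece $Q_w$, $g_{S_i} = l_w$ is affine, hence a member of $\Aff$ up to the $x$-linear part --- actually $l_w$ \emph{is} in $\Aff$, so $\sigma_i(x,\nu 2^{-i})$ measures the $L_4$-distance of a function that is piecewise in $\Aff$ from the best single element of $\Aff$. If $V(\Psi_f(x),\nu 2^{-i})$ were contained in a single $Q_w$, then $\sigma_i(x,\nu 2^{-i}) = 0$. In general it straddles finitely many pieces, and the contribution is controlled by the jumps between the affine functions $l_w$ of adjacent pieces. By Lemma~\ref{lem:child differences} (or \eqref{eq:approximating plane differences}) applied along the $h$-subtree or across a vertical cut, for any two pieces $w, w'$ of $F_i$ that are close, $\|l_w - l_{w'}\|_{L_\infty}$ on the relevant region is $\lesssim \lambda \frac{\delta_z(Q_w)}{\delta_x(Q_w)} \approx \lambda 2^{-2i}/2^{-i} = \lambda 2^{-i}$ on a region of size $\lesssim 2^{-i}$, but more precisely the affine functions only differ in the small region near the shared boundary. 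Picking $h = l_w$ for the dominant piece $w$, one gets
$$\sigma_i(x,\nu 2^{-i})^4 \lesssim (\nu 2^{-i})^{-7} \sum_{w'} \|l_{w'} - l_w\|_{L_4(V(\Psi_f(x),\nu 2^{-i}) \cap Q_{w'})}^4,$$
summing over the boundedly many pieces $w'$ met. Integrating over $x \in \frac{1}{3}Q$: each piece $Q_w$ is covered by $O(1)$-many of these $V$-parallelograms in a bounded-overlap way, so Fubini gives
$$\int_{\frac{1}{3}Q} \sigma_i(x,\nu 2^{-i})^4 \ud x \lesssim 2^{7i} \sum_{w \in F_i} \|l_w - l_{w_{\mathrm{adj}}}\|_{L_\infty}^4 \cdot |\{x : V(\Psi_f(x),\nu 2^{-i}) \text{ meets } \partial Q_w\}|,$$
and the last measure is $\lesssim 2^{-i} \cdot \delta_x(Q_w) \approx 2^{-i} \cdot 2^{-i}$-ish, actually $\lesssim 2^{-2i}\cdot (\text{length of }\partial Q_w)/2^{-i}$; combining with $\|l_w - l_{w_{\mathrm{adj}}}\|_{L_\infty} \lesssim \lambda \delta_z(Q_w)/\delta_x(Q_w)$ and $|Q_w| \approx \delta_x(Q_w)\delta_z(Q_w)$, the right-hand side collapses to $\lesssim \sum_{w \in F_i} \frac{\delta_z(Q_w)^4}{\delta_x(Q_w)^4}|Q_w| = \sum_{w \in F_i}\alpha(Q_w)^{-4}|Q_w| = W(F_i)$, as \eqref{eq:L4-weights} shows is the right shape.

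The main obstacle I anticipate is the careful bookkeeping of \emph{which} pieces of $F_i$ a given $V(\Psi_f(x),\nu 2^{-i})$ can meet and how the affine pieces $l_w$ fit together across boundaries --- in particular, near a vertical cut the neighboring $l_w$'s agree only after correcting by the ``straddle'' term, and near a horizontal cut the boundary is a characteristic parabola rather than a straight line, so one must use the rectilinearity estimates \eqref{eq:def rectilinear 1 and 2} and \eqref{eq:horiz cut edge bounds} together with \eqref{eq:approximating plane differences} to show the jump is genuinely of size $\lesssim \lambda\, \delta_z(Q_w)/\delta_x(Q_w)$ on the overlap region and that the overlap region has the claimed small measure. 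A secondary subtlety is ensuring that $\nu$ can be chosen uniformly (depending only on $L$, via $\alpha_{\mathrm{min}}$ and the fixed patchwork parameters), which requires that the constraint $2^{-i} \le \delta_x(Q)/\alpha_{\mathrm{min}}$ prevents $V(\Psi_f(x),\nu 2^{-i})$ from being large compared to the pieces of $F_i$ near the boundary of $Q$. Once these combinatorial/geometric facts are in place, the estimate is, as indicated above, a direct summation matching the weight $W(F_i)$.
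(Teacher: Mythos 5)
Your overall architecture matches the paper's: choose $\nu$ so that the sets $V(\Psi_f(x),\nu 2^{-i})$ sit inside $Q$ and inside a bounded neighborhood of the pieces they touch, use that $g_{S_i}$ is piecewise affine with pieces $l_w$, $w\in F_i$, compare it on each such $V$ to a single competitor in $\Aff$, bound the error by the ``jumps'' between neighboring $l_w$'s, and then sum/Fubini to get $W(F_i)$ (your boundary-straddling accounting and the paper's pointwise bound $|g_{S_i}-l_{v_1}|\lesssim \zeta$ both produce the right arithmetic). The problem is the jump estimate itself, which is the heart of the lemma and which your proposed tools do not deliver. Two pieces $w,w'\in F_i$ that are spatially adjacent need not be siblings or tree-close: their lowest common ancestor can be arbitrarily high in $\Delta$. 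Chaining Lemma~\ref{lem:child differences} (or \eqref{eq:approximating plane differences}) along the tree path through that ancestor gives a telescoping sum of terms $\lambda\,\delta_z(Q_u)/\delta_x(Q_u)$ over all $u$ on the path, and this sum is dominated by the ratios of the high ancestors, which can be as large as $\delta_z(Q)/\delta_x(Q)$ --- far larger than the needed $\lambda\,\delta_z(Q_w)/\delta_x(Q_w)\approx \lambda 2^{-2i}/\delta_x(Q_w)$ when $i$ is large. Also, the assertion that adjacent affine pieces ``only differ in the small region near the shared boundary'' is false (two distinct affine functions differ off a line); what one actually needs is the $L_\infty$ bound on all of $Q_w$.

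The paper closes exactly this gap with Lemma~\ref{lem:nearby-diff}, which compares $l_w$ and $l_v$ \emph{directly} rather than through the tree: one first proves the geometric containment $Q_w\subset 10Q_v$ for pieces of $F_i$ with $Q_w\cap 3Q_v\neq\emptyset$ and $\delta_x(Q_w)\le\delta_x(Q_v)$ (Lemma~\ref{lem:nearby-contain}, which relies on Lemma~\ref{lem:two-close-planes} and hence on the paramonotone-stopped property via Proposition~\ref{prop:Omega-control}), then bounds $\|f-l_v\|_{L_1(10Q_v)}$ and $\|f-l_w\|_{L_1(Q_w)}$ by \eqref{eq:L1-bound}, converts the ratio using $\delta_z(Q_v)\approx\delta_z(Q_w)$ so that $\frac{\delta_z(Q_v)}{\delta_x(Q_v)}\frac{|Q_v|}{|Q_w|}\approx\frac{\delta_z(Q_w)}{\delta_x(Q_w)}$, and finally upgrades $L_1$ to $L_\infty$ using that affine functions on rectilinear pseudoquads satisfy $\|L\|_{L_\infty(Q)}\lesssim |Q|^{-1}\|L\|_{L_1(Q)}$. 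In addition, to compare every piece met by $D(x)=V(\Psi_f(x),\nu 2^{-i})$ with one fixed $l_{v_1}$ (the widest such piece), you need $D(x)\subset 3Q_{v_1}$, which is supplied by Lemma~\ref{lem:V-contained} together with Lemma~\ref{lem:Q-2Q}, not merely by Lemma~\ref{lem:proj-quad}. Without these ingredients --- in particular without the non-tree-based comparison of approximating planes of spatially adjacent but tree-distant pieces --- the argument as proposed does not go through.
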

These two bounds will imply the proposition.

Before we prove Lemma~\ref{lem:gamma-Si-weight}, we state some lemmas that we will prove in Section~\ref{sec:pq-lemmas}. 

\begin{lemma} \label{lem:V-contained}
  For any \(0 < L < 1\) and \(0 < a < b \leq 4\), there exists \(0 < \eta < 1\) so that the following holds.  Let $\Gamma=\Gamma_f$ be an $L$--intrinsic Lipschitz graph, and let $Q$ be a rectilinear pseudoquad for $\Gamma$. Let $r = \min\{\sqrt{\delta_z(Q)}, \delta_x(Q)\}$. If \(V(p,\eta r)\) intersects $aQ$ for some $p \in \Gamma$ then $V(p,\eta r) \subseteq bQ$.
\end{lemma}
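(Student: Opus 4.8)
The plan is to replace $V(p,\eta r)$ by the explicit ``sheared box'' furnished by Lemma~\ref{lem:proj-quad}. Writing $(x_0,0,z_0)=\Pi(p)$ and $y_0=y(p)$, that lemma gives $V(p,\eta r)\subseteq T:=\{(x,0,z)\mid |x-x_0|\le\eta r,\ |z-z_0+y_0(x-x_0)|\le\eta^2 r^2\}$, so it suffices to prove $T\subseteq bQ$ under the hypothesis that $T$ meets $aQ$. Let $R$ be the parabolic rectangle of $Q$ (so $aQ=aR$, $bQ=bR$), let $h$ be its central parabola (so $h''=-\slope(R)$), and after an auxiliary dilation — which preserves the intrinsic Lipschitz constant and rescales $V(p,\cdot)$, $R$ and $r$ consistently — assume $\delta_z(Q)=1$, so $r=\min\{1,\delta_x(Q)\}$. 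It is convenient to work with the offset $\zeta(v):=z(v)-h(x(v))$ from $h$, so that $\rho R=\{x\in\rho I,\ |\zeta|\le\rho^2/2\}$ for $0<\rho\le4$.

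\textbf{Reduction to a tilt estimate.} Fix $q\in T\cap aQ$. The $x$--coordinate is immediate: every $w\in T$ has $|x(w)-x(q)|\le 2\eta r\le 2\eta\,\delta_x(Q)$, so $x(w)\in bI$ as soon as $\eta\le\tfrac{b-a}{4}$ (and then also $x_0\in 4I$). For the $z$--coordinate, expanding the defining inequality of $T$ for $w$ and for $q$ and using $h(x(w))-h(x_0)=h'(x_0)(x(w)-x_0)-\tfrac12\slope(R)(x(w)-x_0)^2$ yields
\begin{equation*}
  |\zeta(w)|\ \le\ |\zeta(q)|+2\,|h'(x_0)+y_0|\,\eta r+|\slope(R)|\,\eta^2 r^2+2\eta^2 r^2\ \le\ \tfrac{a^2}{2}+2\,|h'(x_0)+y_0|\,\eta r+|\slope(R)|\,\eta^2 r^2+2\eta^2 r^2 .
\end{equation*}
Here $\eta^2 r^2\le\eta^2$, and a Chebyshev-type comparison of the parabola $h_1$ with the characteristic curve $g_1$ bounding $Q$ (using $\|g_1-h_1\|_{L_\infty(4I)}\le\tfrac1{32}$ and the linearization bound \eqref{eq:g-linearize}) gives $|\slope(R)|\lesssim_L 1+\delta_x(Q)^{-2}$, hence $|\slope(R)|\,\eta^2 r^2\lesssim_L\eta^2$. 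So everything reduces to the \emph{tilt estimate} $|h'(x_0)+y_0|\lesssim_L r^{-1}$: granting it, $2|h'(x_0)+y_0|\eta r\lesssim_L\eta$, and the whole right side is $\le b^2/2$ once $\eta$ is small enough in terms of $L,a,b$; together with the $x$--bound this gives $T\subseteq bR=bQ$.

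\textbf{The tilt estimate.} Since $p\in\Gamma$, $y_0=f(x_0,0,z_0)$, while $g_1'(x_0)=-f(x_0,0,g_1(x_0))$ by \eqref{eq:cc-pde}; since $h'=h_1'$, the vertical H\"older bound \eqref{eq:lipschitz-vertical} gives
\begin{equation*}
  |h'(x_0)+y_0|\ \le\ |(h_1-g_1)'(x_0)|+|f(x_0,0,z_0)-f(x_0,0,g_1(x_0))|\ \le\ |(h_1-g_1)'(x_0)|+\tfrac{4}{1-L}\sqrt{|z_0-g_1(x_0)|}.
\end{equation*}
For the first term, $u:=h_1-g_1$ satisfies $\|u\|_{L_\infty(4I)}\le\tfrac1{32}$ and $\|u''\|_{L_\infty(4I)}\lesssim_L 1+\delta_x(Q)^{-2}$, so a Landau--Kolmogorov-type interpolation — Taylor-expanding $u$ at $x_0$ over a sub-interval of $4I$ of the balanced length $\approx r$ (which fits, since $4I$ has length $4\delta_x(Q)\ge 4r$ and $x_0$ lies well inside it) — gives $|(h_1-g_1)'(x_0)|\lesssim_L r^{-1}$. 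For the second term, tracing how close $\Pi(p)$ is to $q\in aR$ and $q$ is to $g_1$ (by the first displayed inequality applied to $q$, together with $|g_1(x_0)-h(x_0)|\le\tfrac{17}{32}$) gives $|z_0-g_1(x_0)|\lesssim_L 1+|h'(x_0)+y_0|\,\eta r$. Substituting back produces the self-referential inequality $|h'(x_0)+y_0|\lesssim_L r^{-1}+\sqrt{|h'(x_0)+y_0|\,\eta r}$, which closes by an absorption (AM--GM) argument since $\eta$ is small, yielding $|h'(x_0)+y_0|\lesssim_L r^{-1}$.

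\textbf{Main obstacle.} The crux is the tilt estimate, and in particular that it must hold \emph{uniformly} over all aspect ratios $\alpha(Q)=\delta_x(Q)/\sqrt{\delta_z(Q)}$: in the tall--thin regime ($\alpha(Q)$ small, $r=\delta_x(Q)$) and in the short--wide regime ($\alpha(Q)$ large, $r=\sqrt{\delta_z(Q)}$) the ``$\|u\|_\infty$'' and ``$\|u''\|_\infty$'' contributions to the interpolation bound dominate in the two cases, so one must balance the Taylor length at $\approx r$ rather than at the naive $\approx\delta_x(Q)$ to avoid losing a factor; the same balance is what makes the bound on $|\slope(R)|$ usable in both regimes. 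The remaining points — closing the self-referential bound on the tilt and checking $x_0\in 4I$ — are routine.
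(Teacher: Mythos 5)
Your argument is correct, and it diverges from the paper's proof in one structural respect. The paper first recenters: using Lemma~\ref{lem:V-comparable} it replaces $V(p,\eta r)$ by $V(\Psi_f(v),2c\eta r)$ with $v\in aQ$, so that the projected basepoint lies in $aQ\subset 4Q$; the tilt term $|g'(x_0)+f(v)|=|f(v)-\lambda(v)|$ is then bounded directly by Lemma~\ref{lem:pseudoquad-linfty}, and the curvature term by Lemma~\ref{lem:parabolic-bound}, with no bootstrapping needed. You instead keep the original center $p$, enclose $V(p,\eta r)$ in the sheared box of Lemma~\ref{lem:proj-quad}, compare against an intersection point $q\in aQ$, and pay for the unknown location of $\Pi(p)$ (and the a priori uncontrolled shear $y(p)$) with the self-referential estimate closed by absorption; this works, and your inline tilt estimate is essentially the paper's proof of Lemma~\ref{lem:pseudoquad-linfty} (compare $h_1'$ with $g_1'$ over a step of balanced length $\approx r$, then use \eqref{eq:lipschitz-vertical}), re-derived because that lemma cannot be applied at a point not known to lie in $4Q$. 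Two small remarks: the phrase $\|u''\|_{L_\infty(4I)}\lesssim_L 1+\delta_x(Q)^{-2}$ is an abuse, since $g_1$ need not be twice differentiable — but the Taylor-type argument you describe only needs \eqref{eq:g-linearize} together with $h_1''=-\slope(R)$, exactly as you indicate, so this is cosmetic; and the absorption could be avoided altogether by noting that $q=\Pi(p')$ for some $p'\in B(p,\eta r)\cap p\, q\langle Y\rangle$-type configuration, whence Lemma~\ref{lem:intrinsic-lipschitz-metric} gives $|y(p)-f(q)|\le\frac{2}{1-L}\eta r$ with $q\in aQ$, which controls the shear directly — or, as the paper does, by recentering via Lemma~\ref{lem:V-comparable}. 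Either simplification would shorten your ``tilt estimate'' paragraph, but as written the proof is complete in outline and the constants close correctly in both aspect-ratio regimes.
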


\begin{lemma}\label{lem:nearby-contain}
  Let $v,w \in F_i$.  If \(\delta_x(Q_w) \leq \delta_x(Q_v)\) and $Q_w \cap 3Q_v \neq \emptyset$, then $Q_w \subset 10 Q_v$.
\end{lemma}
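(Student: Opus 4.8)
The plan is to argue purely from the geometry of how pseudoquads in a single generation $F_i$ sit inside $V_0$, using the rectilinearity bounds and the height control \eqref{eq:Fi-heights}. Recall that every $w\in F_i$ has $\delta_z(Q_w)\in[2^{-2i},4\cdot 2^{-2i})$, so in particular all heights in $F_i$ are comparable up to a factor of $4$; the only quantity that varies significantly across $F_i$ is the width $\delta_x(Q_w)$. Since $Q_w$ and $Q_v$ are rectilinear, it suffices to prove the containment for the associated parabolic rectangles $R_w$ and $R_v$ up to the $\mu$--errors, and then absorb those errors into the constant $10$; more precisely, $Q_w\subset 10 Q_v$ will follow once we show that the base of $Q_w$ lies in (say) $8I_v$ and that the $z$--extent of $Q_w$ over that base lies within $\frac{1}{2}(10)^2\delta_z(Q_v)$ of the center curve $\frac{h_1^v+h_2^v}{2}$.

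First I would handle the base. The hypothesis $Q_w\cap 3Q_v\neq\emptyset$ gives a point $(x_0,z_0)\in Q_w\cap 3Q_v$, so $x_0\in I_w\cap 3I_v$. Since $\delta_x(Q_w)\le\delta_x(Q_v)=\ell(I_v)$, the interval $I_w$ has length at most $\ell(I_v)$ and meets $3I_v$, hence $I_w\subset 5I_v\subset 8I_v$; this already controls the horizontal direction. Next I would control the vertical direction, which is the step I expect to be the crux. Over the common range of $x$, the lower and upper bounds $g_1^w,g_2^w$ of $Q_w$ are within $\mu\delta_z(Q_w)\lesssim 2^{-2i}$ of parabolas $h_1^w,h_2^w$ with $(h_i^w)''=-\slope(R_w)$, and similarly for $Q_v$ with second derivative $-\slope(R_v)$. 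The point $(x_0,z_0)$ pins the value of (some bound of) $Q_w$ to within $3^2\delta_z(Q_v)\lesssim 2^{-2i}$ of the center of $R_v$ at $x_0$. To propagate this pointwise bound across all of $I_w\subset 5I_v$, I need to control the difference of the two parabolas, which reduces to controlling $|\slope(R_w)-\slope(R_v)|$ and then integrating twice over an interval of length $\lesssim\delta_x(Q_v)$.

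The slope comparison is where I would invoke the characteristic-curve estimates: both $Q_w$ and $Q_v$ are pseudoquads of the \emph{same} graph $\Gamma$, their boundary curves are genuine characteristic curves of $\Gamma$, and Lemma~\ref{lem:intrinsic-lipschitz-cc} says each such curve is within $\frac{L}{\sqrt{1-L^2}}\frac{(t-s)^2}{2}$ of its tangent line. Since the rectilinear bounds force the slopes $h_i''=-\slope(R)$ of the approximating parabolas to agree with the local derivatives of the characteristic bounds, and the bounds of $Q_w$ and $Q_v$ are close to \emph{common} characteristic curves of $\Gamma$ wherever the pseudoquads are near each other, one gets $|\slope(R_w)-\slope(R_v)|\lesssim 1$ on the relevant scale (after the normalization $\delta_z\approx 2^{-2i}$, $\delta_x\lesssim 1$). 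Integrating this slope discrepancy twice over $I_w\subset 5I_v$ (length $\lesssim\delta_x(Q_v)\lesssim\sqrt{\delta_z(Q_v)}\cdot\alpha_{\min}^{-1}$, so $\delta_x(Q_v)^2\lesssim\delta_z(Q_v)$ using $\alpha(Q_v)\ge\alpha_{\min}/4$ from Lemma~\ref{lem:aspect-bounds}) shows the two center parabolas stay within $O(\delta_z(Q_v))$ of each other over $I_w$. Combining: the $z$--extent of $Q_w$ over $I_w$ is within $O(\delta_z(Q_v))$ of the center of $R_v$, while the half-height of $10Q_v$ is $\frac{1}{2}(10)^2\delta_z(Q_v)=50\delta_z(Q_v)$, which dominates once the implicit constants (depending only on $L$, $\mu$, $\alpha_{\min}$) are tracked. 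Hence $Q_w\subset 10Q_v$.

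The main obstacle is the bookkeeping in the second paragraph: making sure the factor $10$ is actually large enough, i.e., that all the error terms — the $\mu\delta_z$ rectilinearity errors, the $\frac{L}{\sqrt{1-L^2}}$ linearization error over an interval of length $\lesssim\delta_x(Q_v)$, and the $3^2\delta_z(Q_v)$ from the point in $3Q_v$ — sum to something below $50\delta_z(Q_v)$ after using $\delta_x(Q_v)^2\lesssim_L\delta_z(Q_v)$. I would do this by first normalizing $\delta_z(Q_v)=1$ (scale-invariance of the statement), so that $\delta_x(Q_v)=\alpha(Q_v)\in[\alpha_{\min}/4,\,c]$ is bounded, reducing everything to a comparison of two parabolas with bounded second derivatives over a bounded interval, where all constants are explicit.
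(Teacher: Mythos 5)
There is a genuine gap, and it sits exactly at the step you identified as the crux. Your key quantitative claim is that $\delta_x(Q_v)\lesssim\sqrt{\delta_z(Q_v)}\,\alpha_{\min}^{-1}$, ``so $\delta_x(Q_v)^2\lesssim\delta_z(Q_v)$,'' citing Lemma~\ref{lem:aspect-bounds}. That inequality is backwards: the aspect-ratio bound $\alpha(Q_v)\ge\alpha_{\min}/4$ is a \emph{lower} bound, hence gives $\delta_x(Q_v)\gtrsim\sqrt{\delta_z(Q_v)}$, not an upper bound. There is no upper bound on aspect ratios in a PSFC: each horizontal cut multiplies $\alpha$ by $\sqrt{2}$ (Lemma~\ref{lem:soft cut props}), so where $\Gamma$ is flat the pseudoquads of $F_i$ become arbitrarily wide relative to their height (for a vertical plane, $\alpha\approx\alpha_0 2^i$ on $F_i$). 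Consequently the drift you get by ``integrating the slope discrepancy twice'' over an interval of length up to $\delta_x(Q_v)$ is not $O(\delta_z(Q_v))$: the only slope control available from rectilinearity plus Lemma~\ref{lem:intrinsic-lipschitz-cc} is Lemma~\ref{lem:parabolic-bound}, which gives $|\slope(R_w)-\slope(R_v)|\le\frac{2L}{\sqrt{1-L^2}}+\alpha(Q_w)^{-2}+\alpha(Q_v)^{-2}$, and already the first term contributes an error $\approx\frac{L}{\sqrt{1-L^2}}\delta_x(Q_v)^2$, which can dwarf $\delta_z(Q_v)$ when $\alpha(Q_v)\gg1$; the $\alpha(Q_w)^{-2}$ term is even worse when $\delta_x(Q_w)\ll\delta_x(Q_v)$. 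So the claim ``$|\slope(R_w)-\slope(R_v)|\lesssim1$ on the relevant scale'' is unjustified, and the argument as written does not close — certainly not with the specific factor $10$, which is not negotiable here because the only $L_1$ control on the approximating plane, \eqref{eq:L1-bound}, is stated on $10Q_v$.

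The missing idea is that you must use the PSFC structure, not just rectilinearity of the two pseudoquads. Every $Q_u$ in a PSFC satisfies Proposition~\ref{prop:Omega-control}\,(\ref{it:omega control characteristics}) via Lemma~\ref{lem:full approximating planes}: the characteristic curve of $\Gamma$ through any point of $4Q_u$ stays within $\frac{1}{32r^2}\delta_z(Q_u)$ of the corresponding characteristic curve of the approximating plane over the \emph{entire} interval $4I_u$. This is the paper's route (via Lemma~\ref{lem:two-close-planes}): take a common point $q\in Q_w\cap 3Q_v$ and compare the parabolas of $P_w$ and $P_v$ through $q$ by going through the single characteristic curve $\kappa_{\Gamma,q}$ of $\Gamma$, getting $\|\kappa_{P_w,q}-\kappa_{P_v,q}\|_{L_\infty(I_w)}\le\frac{1}{8}\bigl(\delta_z(Q_w)+\delta_z(Q_v)\bigr)$ with explicit constants, with no slope comparison and no dependence on the widths; the height comparability \eqref{eq:Fi-heights} then yields the containment with room to spare inside $10Q_v$. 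Without paramonotonicity, characteristic curves a height $\sim\delta_z(Q_v)$ apart can genuinely diverge over a long base (that divergence is exactly what the nonmonotonicity measure detects), so no argument based only on rectilinearity and the curvature bound of Lemma~\ref{lem:intrinsic-lipschitz-cc} can prove the lemma.
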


This lemma lets us prove the following bound.
\begin{lemma}\label{lem:nearby-diff}
  Let $v,w \in F_i$.  If \(\delta_x(Q_w) \leq \delta_x(Q_v)\) and $Q_w \cap 3Q_v \neq \emptyset$, then 
    \begin{align}
    \|l_{w} - l_{v}\|_{L_\infty(Q_w)} \lesssim \frac{\delta_z(Q_w)}{\delta_x(Q_w)}. \label{eq:affine-diff}
  \end{align}
\end{lemma}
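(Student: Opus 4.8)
The plan is to combine the containment from Lemma~\ref{lem:nearby-contain} with the $L_1$ approximation bounds of Lemma~\ref{lem:full approximating planes} and the fact that the difference of two affine functions is itself affine, so that an $L_\infty$ bound on a pseudoquad can be extracted from an $L_1$ bound on a larger concentric pseudoquad. First I would invoke Lemma~\ref{lem:nearby-contain}: under the hypotheses $\delta_x(Q_w)\le \delta_x(Q_v)$ and $Q_w\cap 3Q_v\ne\emptyset$, we get $Q_w\subset 10Q_v$. Since both $w$ and $v$ lie in $F_i=\min(S_i)$, they are horizontally cut, so Lemma~\ref{lem:full approximating planes} (or directly Definition~\ref{def:lambda approximating} together with Proposition~\ref{prop:Omega-control}) gives $\|l_v-f\|_{L_1(10Q_v)}\lesssim \frac{\delta_z(Q_v)}{\delta_x(Q_v)}|Q_v|$ and $\|l_w-f\|_{L_1(10Q_w)}\lesssim \frac{\delta_z(Q_w)}{\delta_x(Q_w)}|Q_w|$. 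Because $Q_w\subset 10Q_v$, we can restrict the first bound: $\|l_v-f\|_{L_1(Q_w)}\le \|l_v-f\|_{L_1(10Q_v)}\lesssim \frac{\delta_z(Q_v)}{\delta_x(Q_v)}|Q_v|$.

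Next I would estimate $\|l_w-l_v\|_{L_1(Q_w)}\le \|l_w-f\|_{L_1(Q_w)}+\|f-l_v\|_{L_1(Q_w)}$. The first term is at most $\|l_w-f\|_{L_1(10Q_w)}\lesssim \frac{\delta_z(Q_w)}{\delta_x(Q_w)}|Q_w|$. For the second term, I need to compare $\frac{\delta_z(Q_v)}{\delta_x(Q_v)}|Q_v|$ against $\frac{\delta_z(Q_w)}{\delta_x(Q_w)}|Q_w|$. Here I would use the fact that $v,w\in F_i$ implies $\delta_z(Q_v)\approx\delta_z(Q_w)\approx 2^{-2i}$ by \eqref{eq:Fi-heights}, so $\frac{\delta_z(Q_v)}{\delta_x(Q_v)}|Q_v|\approx \delta_z(Q_v)^2\approx \delta_z(Q_w)^2\approx \frac{\delta_z(Q_w)}{\delta_x(Q_w)}|Q_w|$, using $|Q_\bullet|\approx\delta_x(Q_\bullet)\delta_z(Q_\bullet)$. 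Hence $\|l_w-l_v\|_{L_1(Q_w)}\lesssim \delta_z(Q_w)^2\approx \frac{\delta_z(Q_w)}{\delta_x(Q_w)}|Q_w|$.

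Finally, I would pass from $L_1$ to $L_\infty$. The function $l_w-l_v$ is affine, and $Q_w$ contains a parabolic-rectangle-like region of width $\approx\delta_x(Q_w)$ and height $\approx\delta_z(Q_w)$ (precisely, $\tfrac23 Q_w\subset Q_w$ by Lemma~\ref{lem:Q-2Q}, and $\tfrac23 Q_w$ has comparable width and height to $Q_w$); an affine function on $V_0$ has the form $ax+b$, so its $L_\infty$ norm over such a region is controlled by its average there up to a constant, i.e. $\|l_w-l_v\|_{L_\infty(Q_w)}\lesssim |Q_w|^{-1}\|l_w-l_v\|_{L_1(Q_w)}\lesssim \frac{\delta_z(Q_w)}{\delta_x(Q_w)}$. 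This is \eqref{eq:affine-diff}. I expect the only mildly delicate point to be the $L_1$-to-$L_\infty$ step for affine functions on a pseudoquad rather than on its associated parabolic rectangle; this is handled exactly as in the proof of Lemma~\ref{lem:child differences}, where \eqref{eq:approximating plane differences} uses that $Q_w$ contains a fixed-proportion parabolic rectangle, so that an affine function with small average has small sup. All other steps are routine bookkeeping with the aspect-ratio and height estimates already available in the excerpt.
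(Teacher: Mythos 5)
Your proposal is correct and follows essentially the same route as the paper: Lemma~\ref{lem:nearby-contain} to get $Q_w\subset 10Q_v$, the $L_1$ bounds \eqref{eq:L1-bound} plus the triangle inequality and the comparability $\delta_z(Q_v)\approx\delta_z(Q_w)$ for $v,w\in F_i$, and finally an $L_1$-to-$L_\infty$ comparison for affine functions on a rectilinear pseudoquad. The paper simply makes your last step explicit by proving $\|l\|_{L_\infty(Q)}\le 24\,|Q|^{-1}\|l\|_{L_1(Q)}$ for affine $l$, which is exactly the mechanism you identified.
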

\begin{proof}
  By Lemma~\ref{lem:nearby-contain}, $Q_w \subset 10Q_v$, so 
  \begin{align*}
    \frac{1}{|Q_w|} \|f - l_{Q_v}\|_{L_1(Q_w)} \leq \frac{1}{|Q_w|} \|f - l_{Q_v}\|_{L_1(10Q_v)} \overset{\eqref{eq:L1-bound}}{\leq} \frac{1}{|Q_w|} \lambda \frac{\delta_z(Q_v)}{\delta_x(Q_v)} |Q_v| \lesssim \frac{\delta_z(Q_w)}{\delta_x(Q_w)}.
  \end{align*}
  In the last inequality, we used the fact that $\delta_z(Q_v) \approx \delta_z(Q_w)$ and so Lemma~\ref{lem:soft cut props} gives that $|Q_v|/|Q_w| \approx \delta_x(Q_v)/\delta_x(Q_w)$.

  Likewise, 
  \begin{align*}
    \frac{1}{|Q_w|} \|f - l_{Q_w}\|_{L_1(Q_w)}  \overset{\eqref{eq:L1-bound}}{\leq} \lambda \frac{\delta_z(Q_w)}{\delta_x(Q_w)},
  \end{align*}
  so by the triangle inequality,
  \begin{align*}
    \frac{1}{|Q_w|}\|l_{Q_w} - l_{Q_v}\|_{L_1(Q_w)}  \lesssim  \frac{\delta_z(Q_w)}{\delta_x(Q_w)}.
  \end{align*}
  
  It remains to show that $|Q_w| \|l_v-l_w\|_{L_\infty(Q_w)}\lesssim \|l_v-l_w\|_{L_1(Q_w)}$. In fact, we claim that if $L$ is any affine function and $Q$ is a rectilinear pseudoquad, then 
  \begin{align}
    \|L\|_{L_\infty(Q)} \leq 24 \frac{\|L\|_{L_1(Q)}}{|Q|}. \label{eq:affine-lp-comp}
  \end{align}
  
  Let $I=x(Q)$ be the base of $Q$. After a scaling and translation, we may suppose that $I=[-1,1]$. Since $Q$ is rectilinear, there are functions $g_1,g_2\from I\to \R$ such that $Q=\{(x,0,z) | x\in I, z\in [g_1(x),g_2(x)]\}$ and quadratic polynomials $h_1,h_2\from I\to \R$ such that $h_2=h_1+\delta_z(Q)$ and $\|g_i-h_i\|_\infty \le \frac{1}{32}\delta_z(Q)$. In particular, $|Q|\le 3 \delta_z(Q)$.
  
  Let $M=\|L\|_{L_\infty(Q)}$. Since $L(x,0,z)= ax + b$ for some $a$ and $b$, we have $M = |L(1,0,0)|$ or $M = |L(-1,0,0)|$; suppose $M=|L(1,0,0)|$. (The other case is similar.)
  
  Then $|L(x,0,0)| \ge \frac{M}{2}$ for $x\ge \frac{1}{2}$, so
  $$\|L\|_{L_1(Q)}\ge \int_{\frac{1}{2}}^1 \int_{g_1(x)}^{g_2(x)} \frac{M}{2}\ud z \ud x \ge \frac{M}{8}\delta_z(Q) \ge \frac{M}{24}|Q|.$$
  This proves the lemma.
\end{proof}

Now we prove Lemma~\ref{lem:gamma-Si-weight}.
\begin{proof}[Proof of Lemma~\ref{lem:gamma-Si-weight}]
  Let $\zeta\from Q\to \R$ be the function such that for any $w\in F_i$ and $x\in Q_w$, $\zeta(x) = \frac{\delta_z(Q_w)}{\delta_x(Q_w)}$. If $w\ne w'$, then $Q_w$ and $Q_{w'}$ intersect in a set of measure zero, so we break ties arbitrarily. We will bound $\gamma_{4,g_{S_i}}$ in terms of $\|\zeta\|_4$.
  
  For all $v\in F_i$, we have $\delta_z(Q_v)\in [2^{-2i}, 4\cdot 2^{-2i}]$. By Lemma~\ref{lem:aspect-bounds}, this implies $\alpha(Q_v)\ge \frac{\alpha_{\mathrm{min}}}{4}$, so $\delta_x(Q_v)\ge \frac{\alpha_{\mathrm{min}}}{4}2^{-i}$. Then 
  $$\frac{\alpha_{\mathrm{min}}}{4} 2^{-i} \le \min\{\delta_x(Q_v),\sqrt{\delta_z(Q_v)}\}.$$
  By Lemma~\ref{lem:Q-2Q} and Lemma~\ref{lem:V-contained}, there is an $0 < \eta_0 < 1$ such that if $v\in F_i$, $p\in \Gamma$, and $Q_v\cap V(p, \frac{\alpha_{\mathrm{min}}}{4}  \eta_0 2^{-i})\ne \emptyset$, then $V(p, \frac{\alpha_{\mathrm{min}}}{4} \eta_0 2^{-i})\subset 3Q_v$. Likewise, there is a $0 < \eta_1 < 1$ such that if $x\in \frac{1}{3}Q_v$, then 
  $$V(\Psi_f(x), \frac{\alpha_{\mathrm{min}}}{4} \eta_1 2^{-i})\subset \frac{2}{3}Q_v\subset Q_v.$$
  Let $\nu = \frac{\alpha_{\mathrm{min}}}{4}\min\{\eta_0,\eta_1\}$; we can choose $\nu$ to depend only on $L$. Then $V(\Psi_f(x),\nu)\subset Q$ for all $x\in \frac{1}{3}Q$.

  Let $x\in \frac{1}{3}Q$ and let $D(x)=V(\Psi_f(x),\nu 2^{-i})$. Let $v_1,\dots, v_n\in F_i$ be the elements such that $Q_{v_i}$ intersects $D(x)$ and suppose that $\delta_x(Q_{v_1})\ge \delta_x(Q_{v_i})$ for all $i$. For any $y \in D(x)$, we have $y\in 3 Q_{v_1}$ and there is a $j$ such that $y\in Q_{v_j}$. By Lemma~\ref{lem:nearby-diff}, 
  $$|g_{S_i}(y) - l_{v_1}(y)|=|l_{v_j}(y) - l_{v_1}(y)| \lesssim \zeta(y).$$
  Therefore,
  \begin{equation}\label{eq:zeta-bound}
    \sigma_i(x,\nu 2^{-i}) \le (\nu 2^{-i})^{-\frac{7}{4}} \|g_{S_i} - l_{v_1} \|_{L_4(D(x))} \lesssim 2^{\frac{7i}{4}} \|\zeta\|_{L_4(D(x))}
  \end{equation}
  and
  $$\int_{\frac{1}{3}Q}\sigma_i(x, \nu 2^{-i})^4 \ud x \lesssim 2^{7i} \int_{\frac{1}{3}Q} \int_{D(x)} \zeta^4(y) \ud y \ud x =: I.$$
  
  Let $c$ be as in Lemma~\ref{lem:V-comparable}.
  Then for every $x\in \frac{1}{3}Q$ and $y\in D(x)$, we have that $x\in V(\Psi_f(y), c \nu 2^{-i})$. Our choice of $\nu$ implies that $y\in Q$, so by Fubini's Theorem,
  $$I \le 2^{7i} \int_{Q} \int_{V(\Psi_f(y), c \nu 2^{-i})} \zeta^4(y) \ud x \ud y\lesssim 2^{4i} \|\zeta\|^4_{L_4(Q)}.$$
  
  Furthermore, since $\delta_z(Q_w)\approx 2^{-2i}$ for all $w\in F_i$,
  $$\|\zeta\|^4_{L_4(Q)} = \sum_{w\in F_i} |Q_w| \frac{\delta_z(Q_w)^4}{\delta_x(Q_w)^4} = \sum_{w\in F_i} \frac{|Q_w| \delta_z(Q_w)^2}{\alpha^{4}(Q_w)} \approx 2^{-4i} W(F_i),$$
  so 
  $$\int_{\frac{1}{3}Q}\sigma_i(x, \nu 2^{-i})^4 \ud x \lesssim 2^{4i} \|\zeta\|^4_{L_4(Q)} \lesssim W(F_i)$$
  as desired.
\end{proof}

Finally, we prove Proposition~\ref{prop:Q-carleson}.
\begin{proof}[Proof of Proposition \ref{prop:Q-carleson}]
  Let $\Delta$, $S_i$, and $F_i$ be as above and let $\nu$ be as in Lemma~\ref{lem:gamma-Si-weight}. 
  Let $\tau = \frac{\nu}{2\alpha_{\mathrm{min}}}$ and $i_0 = \left\lceil\log_2\frac{\alpha_{\mathrm{min}}}{\delta_x(Q)}\right\rceil$
  so that $2^{-i_0} \le \frac{\delta_x(Q)}{\alpha_{\mathrm{min}}}$ and $\nu 2^{-i_0}\ge \tau \delta_x(Q)$.
  
  Suppose that $i\ge i_0$ and let $r_i=\nu 2^{-i}$. By \eqref{eq:gamma-sigma} and Lemma~\ref{lem:gamma-Si-weight},
  $$\int_{\frac{1}{3} Q} \gamma_{4,f}(x,r_i)^4 \ud x \lesssim \int_{\frac{1}{3} Q} r_i^{-7} \|g_{S_i} - f\|^4_{L_4(V(\Psi_f(x),r_i))}\ud x + W(F_i).$$ 
  As in the proof of Lemma~\ref{lem:gamma-Si-weight}, Fubini's theorem shows
  $$r_i^{-7}\int_{\frac{1}{3} Q} \|g_{S_i} - f\|^4_{L_4(V(\Psi_f(x),r_i))}\ud x \lesssim  r_i^{-4} \|g_{S_i}(y)-f(y))\|_{L_4(Q)}^4,$$
  and by \eqref{eq:L4-weights},
  $$\int_{\frac{1}{3} Q} \gamma_{4,f}(x,r_i)^4 \ud x \lesssim W(F_i) + W(F_i)\lesssim W(F_i).$$ 
  
  For $r\in [\frac{r_i}{2},r]$, we have $\gamma_{4,f}(x,r)\lesssim  \gamma_{4,f}(x,r_i)$, so
  $$\int_{\frac{1}{3} Q} \int_{\frac{r_i}{2}}^{r_i}\gamma_{4,f}(x,r)^4\frac{\ud r}{r} \ud x \lesssim \int_{\frac{1}{3} Q} \gamma_{4,f}(x,r)^4 \ud x \lesssim W(F_i).$$
  By \eqref{eq:Fi-heights}, if $w\in F_i$, then $\delta_z(Q_w)\in [2^{-2i},4\cdot 2^{-2i})$. It follows that the $F_i$'s are disjoint, so we can sum over $i\ge i_0$ to obtain
  $$\int_{\frac{1}{3} Q} \int_{0}^{\nu 2^{-i_0}} \gamma_{4,f}(x,r)^4\frac{\ud r}{r} \ud x \lesssim \sum_{i=i_0}^\infty W(F_i) \le  W(\cV(\Delta))\lesssim |Q|.$$
  This proves the proposition.
\end{proof}

\subsection{Geometry of rectilinear pseudoquads}
\label{sec:pq-lemmas}

In this section, we will prove some basic results and bounds for rectilinear pseudoquads of intrinsic Lipschitz graphs. Recall that a rectilinear pseudoquad for $\Gamma = \Gamma_f$ is a tuple $(Q,R_Q)$ of a pseudoquad $Q$ and a parabolic rectangle $R_Q$ approximating $Q$. Then $R_Q$ is a pseudoquad in a vertical plane of slope $\slope(Q)$, which we denote $P_Q$.  % Let $\rho_Q$ be the affine function such that $P_Q=\Gamma_{\rho_Q}$.

Our first set of results deals with arbitrary rectilinear pseudoquads, without any assumption on paramonotonicity; our goal is to prove Lemma~\ref{lem:V-contained}. We first bound the slope of a rectilinear pseudoquad in terms of the intrinsic Lipschitz constant.

\begin{lemma} \label{lem:parabolic-bound}
  Let $(Q,R)$ be a rectilinear pseudoquad of an $L$--intrinsic Lipschitz graph. Then 
  \begin{align}
    |\slope Q| \leq \frac{L}{\sqrt{1-L^2}} + \frac{\delta_z(Q)}{\delta_x(Q)^2} = \frac{L}{\sqrt{1-L^2}} + \alpha(Q)^{-2}. \label{eq:parabolic-bound}
  \end{align}
\end{lemma}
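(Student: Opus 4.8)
The plan is to read off $\slope Q$ from a second difference of the lower bound of the parabolic rectangle $R$ and to compare it with the same second difference of the characteristic curve bounding $Q$ from below. Write $I=[a,b]$ for the common base of $Q$ and $R$, let $g$ denote the lower bound of $Q$ and $h$ the lower bound of $R$; these are defined on $4I$ (that is where rectilinearity is formulated), $g$ has a characteristic graph, and $h$ is a quadratic with $h''\equiv-\slope Q$ by \eqref{eq:slope-c}. Set $c=\tfrac{a+b}{2}$ and $\delta=2\delta_x(Q)$, so that $c$ is the center of $4I$ and $c\pm\delta$ are its two endpoints; in particular $c,c+\delta,c-\delta\in 4I$, where \eqref{eq:def rectilinear 1 and 2} gives $\|g-h\|_{L_\infty(4I)}\le\tfrac1{32}\delta_z(Q)$ (here $\mu=\tfrac1{32}$ for a rectilinear pseudoquad and $\delta_z(Q)=\delta_z(R)$).

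First I would compute the second difference of $h$: since $h$ is quadratic with $h''=-\slope Q$, one has $h(c+\delta)+h(c-\delta)-2h(c)=-\slope Q\cdot\delta^2$. Next, applying Lemma~\ref{lem:intrinsic-lipschitz-cc} to the characteristic curve $t\mapsto(t,0,g(t))$ twice, once with base point $c$ and increment $+\delta$ and once with increment $-\delta$, and adding the two estimates, gives $|g(c+\delta)+g(c-\delta)-2g(c)|\le \frac{L}{\sqrt{1-L^2}}\delta^2$. Finally, the triangle inequality applied to $g-h$ at the three points $c,c\pm\delta$ gives $|(h-g)(c+\delta)+(h-g)(c-\delta)-2(h-g)(c)|\le 4\cdot\tfrac1{32}\delta_z(Q)=\tfrac18\delta_z(Q)$. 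Combining the three estimates, $|\slope Q|\cdot\delta^2\le \frac{L}{\sqrt{1-L^2}}\delta^2+\tfrac18\delta_z(Q)$; dividing by $\delta^2=4\delta_x(Q)^2$ yields $|\slope Q|\le\frac{L}{\sqrt{1-L^2}}+\frac{\delta_z(Q)}{32\,\delta_x(Q)^2}$, which is stronger than \eqref{eq:parabolic-bound} since $\tfrac1{32}<1$, and the final equality in \eqref{eq:parabolic-bound} is just the definition $\alpha(Q)=\delta_x(Q)/\sqrt{\delta_z(Q)}$.

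I do not expect a genuine obstacle here. The two points that need a little care are: (i) keeping the three evaluation points $c,c\pm\delta$ inside $4I$, which is exactly why one centers at $c$ and takes $\delta=2\delta_x(Q)$, so that both \eqref{eq:def rectilinear 1 and 2} and the linearization estimate of Lemma~\ref{lem:intrinsic-lipschitz-cc} are valid there; and (ii) the factor $4=1+2+1$ coming from the three evaluations of $g-h$ in the second difference. The argument for the upper bounds of $Q$ and $R$ is identical, so it suffices to work with the lower bounds.
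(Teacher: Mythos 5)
Your proposal is correct and is essentially identical to the paper's argument: the paper also reads off $|\slope Q|$ from the second difference of the quadratic bound over the endpoints and midpoint of $4I$ (with $W=2\delta_x(Q)$), controls the corresponding second difference of the characteristic boundary curve via Lemma~\ref{lem:intrinsic-lipschitz-cc}, and uses the $\tfrac1{32}\delta_z(Q)$ rectilinearity bound, arriving at the same stronger constant $\tfrac{1}{32}\,\delta_z(Q)/\delta_x(Q)^2$. The only cosmetic differences are that the paper works with the upper rather than the lower boundary and normalizes coordinates so the center is at the origin.
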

\begin{proof}
  Let $I$ denote the base of $Q$.  By translation and scaling, we may assume $I = [-\delta_x(Q)/2,\delta_x(Q)/2]$. Let $W=2\delta_x(Q)$, so that $4I=[-W,W]$. Let $g$ and $h$ be the functions whose graphs are the characteristic curves forming the top boundaries of $Q$ and $R$, respectively. Since $h$ is quadratic and $\slope Q = -h''(x)$, there are $b,c\in \R$ such that $h(x) = -\slope(Q)\frac{x^2}{2} +bx + c$. Therefore,
  $$|\slope(Q)| = \frac{|h(-W) - 2 h(0) + h(W)|}{W^2}.$$
  
  By Lemma~\ref{lem:intrinsic-lipschitz-cc},
  \begin{align*}
    |g(-W) - & 2 g(0) + g(W)|\\ 
    & \le |g(-W) - g(0) + g'(0) W| + |g(W) - g(0) - g'(0) W| \\
    & \overset{\eqref{eq:g-linearize}}{\leq} \frac{L}{\sqrt{1-L^2}}W^2. % \label{eq:g4-linearize}
  \end{align*}
  By the rectilinearity of $Q$,
  $$\max_{t \in [-W,W]} |g(t) - h(t)| \leq \frac{1}{32} \delta_z(Q),$$
  so
  $$|\slope(Q)| = \frac{|h(-W) - 2 h(0) + h(W)|}{W^2} \le \frac{L}{\sqrt{1-L^2}} + \frac{1}{32}\frac{\delta_z(Q)}{\delta_x(Q)^2}.$$
\end{proof}

We use this to bound the distance from $P_Q$ to $\Gamma_f$. 

\begin{lemma} \label{lem:pseudoquad-linfty}
  For any \(0 < L < 1\) there exists an $\eta > 0$ so that the following holds.  Let $\Gamma_f$ be an $L$--intrinsic Lipschitz graph and let $Q$ be a rectilinear pseudoquad for $\Gamma_f$. Let $\lambda$ be the affine function such that $P_Q=\Gamma_\lambda$.  Then for all $p\in 4Q$,
  \begin{equation}\label{eq:4Q-close-to-affine}
    |f(p) - \lambda(p)| \leq \eta \max\left\{\sqrt{\delta_z(Q)}, \frac{\delta_z(Q)}{\delta_x(Q)} \right\}.
  \end{equation}
\end{lemma}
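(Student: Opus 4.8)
\emph{Sketch of the argument.}
The plan is to avoid comparing the (possibly branching) characteristic curve of $\Gamma$ through an arbitrary $p\in 4Q$ directly with a characteristic curve of $P_Q$, and instead to route the comparison through the \emph{lower boundary curve} of $Q$. Write $p=(x_0,0,z_0)$, let $I$ be the base of $Q$, let $g_1$ be the lower bound of $Q$ and $h_1$ the lower bound of the approximating parabolic rectangle $R$, so that $h_1$ is quadratic with $h_1''\equiv -\slope Q$ by \eqref{eq:slope-c} and $\|g_1-h_1\|_{L_\infty(4I)}\le\tfrac1{32}\delta_z(Q)$ by rectilinearity. Since $P_Q=\Gamma_\lambda$ with $\lambda$ affine and constant on cosets of $Y$, the characteristic equation \eqref{eq:cc-pde} applied to $\lambda$ forces $\lambda(x,0,z)=-h_1'(x)$ for every $z$, so $\lambda(p)=-h_1'(x_0)$; applied to $f$ it gives $f(x_0,0,g_1(x_0))=-g_1'(x_0)$. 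Hence I would reduce the lemma, via the triangle inequality, to the two estimates $|f(p)+g_1'(x_0)|\lesssim_L\sqrt{\delta_z(Q)}$ and $|g_1'(x_0)-h_1'(x_0)|\lesssim_L\max\{\sqrt{\delta_z(Q)},\delta_z(Q)/\delta_x(Q)\}$.

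For the first estimate I would apply Lemma~\ref{lem:intrinsic-lipschitz-metric} with $v=p$ and $w=(x_0,0,g_1(x_0))$. A short computation with \eqref{eq:heis-mult} shows $d_{\Kor}(\Psi_f(p),w\langle Y\rangle)=2\sqrt{|z_0-g_1(x_0)|}$, so $|f(p)-f(w)|\le\tfrac4{1-L}\sqrt{|z_0-g_1(x_0)|}$. Since $p\in 4Q=4R$, the point $z_0$ lies within $8\delta_z(Q)$ of $\tfrac12(h_1(x_0)+h_2(x_0))$, and as $h_2=h_1+\delta_z(Q)$ and $\|g_1-h_1\|_{L_\infty(4I)}\le\tfrac1{32}\delta_z(Q)$ this yields $|z_0-g_1(x_0)|\le 9\delta_z(Q)$; combined with $f(w)=-g_1'(x_0)$, this gives the first estimate (and $\sqrt{\delta_z(Q)}\le\max\{\sqrt{\delta_z(Q)},\delta_z(Q)/\delta_x(Q)\}$).

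For the second estimate --- the heart of the argument --- I would pass from $L_\infty$ closeness of $g_1$ and $h_1$ to closeness of their derivatives at $x_0$ by Taylor expansion at the scale $W_0:=\min\{\delta_x(Q),\sqrt{\delta_z(Q)}\}\le\delta_x(Q)$. Since $x_0\in 4I$ and $W_0\le\delta_x(Q)$, at least one of the points $x_0\pm W_0$ lies in $4I$; fixing that one, the linearization bound \eqref{eq:g-linearize} for $g_1$ and the \emph{exact} quadratic Taylor formula for $h_1$ (using $h_1''\equiv-\slope Q$), together with $\|g_1-h_1\|_{L_\infty(4I)}\le\tfrac1{32}\delta_z(Q)$, give after dividing by $W_0$
\[
  |g_1'(x_0)-h_1'(x_0)|\ \le\ \frac{\delta_z(Q)}{16 W_0}+\frac12\Big(\frac{L}{\sqrt{1-L^2}}+|\slope Q|\Big)W_0 .
\]
I would then invoke the slope bound $|\slope Q|\le\tfrac{L}{\sqrt{1-L^2}}+\delta_z(Q)/\delta_x(Q)^2$ from Lemma~\ref{lem:parabolic-bound} and check, splitting into the cases $\alpha(Q)\le 1$ (so $W_0=\delta_x(Q)$) and $\alpha(Q)>1$ (so $W_0=\sqrt{\delta_z(Q)}$), that each resulting term $\delta_z(Q)/W_0$, $\tfrac{L}{\sqrt{1-L^2}}W_0$, and $\tfrac{\delta_z(Q)}{\delta_x(Q)^2}W_0$ is at most a constant multiple (depending on $L$) of $\max\{\sqrt{\delta_z(Q)},\delta_z(Q)/\delta_x(Q)\}$. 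Tracking constants yields an explicit $\eta$ of the form $\tfrac{12}{1-L}+\tfrac{L}{\sqrt{1-L^2}}+1$, depending only on $L$.

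I expect the main obstacle to be precisely this last step: converting the $C^0$ bound on the boundary curves into a $C^1$ bound on their slopes at a point of $4I$ that may sit near the edge of $4I$. Because such a point need not be symmetrically surrounded inside $4I$, one cannot use a symmetric difference quotient to cancel the quadratic part of $h_1$, so the term $|\slope Q|\,W_0$ genuinely appears; controlling it is exactly what Lemma~\ref{lem:parabolic-bound} is for, and the delicate point is choosing $W_0$ so that this term (not just the main term $\delta_z(Q)/W_0$) lands inside the claimed maximum.
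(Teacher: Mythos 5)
Your proposal is correct and is essentially the paper's own argument: the paper likewise proves the boundary estimate $|f(t,0,g(t))+h'(t)|\lesssim_L\max\{\sqrt{\delta_z(Q)},\delta_z(Q)/\delta_x(Q)\}$ by a one-sided finite difference at scale $\min\{\delta_x(Q),\sqrt{\delta_z(Q)}\}$, combining the linearization bound \eqref{eq:g-linearize}, the rectilinearity bound $\|g-h\|_{L_\infty(4I)}\le\tfrac1{32}\delta_z(Q)$, and the slope bound of Lemma~\ref{lem:parabolic-bound}, and then transfers to a general point of $4Q$ by the vertical estimate \eqref{eq:lipschitz-vertical}. The only differences are cosmetic (you use the lower boundary curve rather than the upper one, and phrase the boundary step as $|g_1'-h_1'|$ rather than as $|f(t,0,g(t))-mt|$, which coincide via \eqref{eq:cc-pde}), so the constants differ but the proof is the same.
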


\begin{proof}
  Let $I$ be the base of $Q$, and let $g_Q,g_R \from \R \to \R$ be functions whose graphs are the characteristic curves going through the tops of $Q$ and $R$, respectively. After a left translation, we may suppose that $I = [-\delta_x(Q)/2,\delta_x(Q)/2]$ and that $P_Q$ goes through $\mathbf{0}$. Then $\lambda(x,z) = mx$ and $g_R(x) = g_R(0) - \frac{1}{2}mx^2$, where $m=\slope(Q)$.
  
  We first show that if $\tau = \frac{3L}{\sqrt{1-L^2}} + 2$, then
  \begin{align}
    \max_{t \in 4I} |f(t,0,g_Q(t)) - mt| \leq \tau \max\left\{\sqrt{\delta_z(Q)}, \frac{\delta_z(Q)}{\delta_x(Q)}\right\}. \label{eq:boundary-small}
  \end{align}
  Let $s \in 4I$, $q = (s,0,g_1(s))$, and let $h=\min\{\sqrt{\delta_z(Q)},\delta_x(Q)\}$. Without loss of generality, suppose that $s + h\in 4I$. By rectilinearity, $|g_Q(x)-g_R(x)|\le \frac{1}{32}\delta_z(Q)$ for all $x\in 4I$. By Lemma~\ref{lem:intrinsic-lipschitz-cc}, 
  $$|g_Q(s+h) - g_Q(s) - h g_Q'(s)|\le \frac{L}{\sqrt{1-L^2}} \frac{h^2}{2},$$
  and by Lemma~\ref{lem:parabolic-bound},
  $$|g_R(s+h) - g_R(s) - h g_R'(s)| = \frac{1}{2} |m| h^2 \le \frac{2L}{\sqrt{1-L^2}}h^2 + \frac{\delta_z(Q)}{\delta_x(Q)^2} h^2.$$
  Since $|(g_R(s+h) - g_R(s)) - (g_Q(s+h) - g_Q(s))|\le \frac{1}{16}\delta_z(Q)$, these imply
  $$|h g_Q'(s) - h g_R'(s)| \le \frac{3L}{\sqrt{1-L^2}}h^2 + \frac{\delta_z(Q)}{\delta_x(Q)^2}h^2+ \frac{1}{16}\delta_z(Q).$$
  We divide both sides by $h$ to find
  \begin{align*}
    |g_Q'(s) + ms| & \le \frac{3L}{\sqrt{1-L^2}}\sqrt{\delta_z(Q)} + \frac{\delta_z(Q)}{\delta_x(Q)} + \max \left\{\sqrt{\delta_z(Q)}, \frac{\delta_z(Q)}{\delta_x(Q)}\right\}\\
    & \le \tau\max \left\{\sqrt{\delta_z(Q)}, \frac{\delta_z(Q)}{\delta_x(Q)}\right\}.
  \end{align*}
  By \eqref{eq:cc-pde}, $f(s,0,g_Q(s))=-g_Q'(s)$, which proves \eqref{eq:boundary-small}.

  Finally, let $(x,0,z) \in 4Q$.  Then $x \in 4I$ and $|z - g_Q(x)| \leq 16 \delta_z(Q)$, so
  \begin{align*}
    |f(x,0,z) - mx| &\leq |f(x,0,z) - f(x,0,g_Q(x))| + |f(x,0,g_Q(x)) - mx| \\
    &\overset{\eqref{eq:lipschitz-vertical} \wedge \eqref{eq:boundary-small}}{\leq} \frac{16}{1-L} \sqrt{\delta_z(Q)} + \tau \max\left\{\sqrt{\delta_z(Q)}, \frac{\delta_z(Q)}{\delta_x(Q)}\right\}.
  \end{align*}
  The lemma follows by taking $\eta = \tau + \frac{16}{1-L}$.
\end{proof}

% For $\ell,h > 0$, we define
% \begin{align*}
%   R_{\ell,h} = [-\ell,\ell] \times \{0\} \times [-h,h] \subseteq V_0.
% \end{align*}
% For $m \in \R$, we define $T_m(x,0,z) = (x,0,z-mx)$ the shear automorphism of $V_0$.

% \begin{lemma} \label{lem:proj-quad}
%   There exists a $\lambda > 0$ so that for any $r > 0$ and $p \in \HH$,
%   \begin{align*}
%     V(p,r) \subseteq \Pi(p) T_{y(p)}(R_{r,\lambda r^2}).
%   \end{align*}
% \end{lemma}

% \begin{proof}
%   By scaling, we may suppose that $r = 1$.  We first assume $p = \zero$.  Then as $\Pi$ is continuous and $B(\zero,1)$ lies in its compact closure, $V(\zero,1)$ lies in $B(\zero,\lambda)$ for some $\lambda > 0$.  It also follows immediately from the definition of $\Pi$ that $V(\zero,1)$ lies in $[-1,1] \times \{0\} \times \R$.  This proves the lemma for $p = \zero$.  As $\Pi$ commutes with left multiplication by elements of $V_0$, the lemma also holds for $p \in V_0$.

%   For $p = (x,y,z) \in \HH$ and $\alpha \in \R$, we have that
%   \begin{align}
%     \Pi(Y^\alpha p) = \Pi\left(x,y+\alpha, z - \frac{\alpha}{2}x\right) = \left(x,0,z-\frac{1}{2}xy-\alpha x\right) = T_\alpha(\Pi(p)). \label{eq:translation-shear}
%   \end{align}
%   Now define $h = Y^{y(p)}$ and $g = h^{-1}p \in V_0$.  As $B(p,r) = hB(g,r)$, we have that
%   \begin{multline*}
%     V(p,r) = \Pi(hB(g,r)) \overset{\eqref{eq:translation-shear}}{=} T_{y(p)}(V(g,r)) = T_{y(p)}(\Pi(g)R_{r,\lambda r^2}) \\
%     \overset{\eqref{eq:translation-shear}}{=} T_{y(p)}(T_{-y(p)}(\Pi(p)) R_{r,\lambda r^2}) = \Pi(p)T_{y(p)}(R_{r,\lambda r^2}).
%   \end{multline*}
% \end{proof}

Finally, we can prove Lemma \ref{lem:V-contained}.

\begin{proof}[Proof of Lemma \ref{lem:V-contained}]
  Let $x\in \Gamma$ and $\rho > 0$ so that $V(x,\rho)$ intersects $aQ$. Let $p\in V(x,\rho)\cap aQ$. By Lemma~\ref{lem:V-comparable}, there is a $c>0$ such that 
  \begin{equation}\label{eq:V-contained-sub}
    V(x,\rho) \subset \Pi(B(x,c \rho) \cap \Gamma) \subset \Pi(B(\Psi_f(p),2c \rho) \cap \Gamma) \subset V(p,2c \rho).
  \end{equation}
  We claim that there is some $\eta' > 0$ such that $V(p,\eta'r)\subset bQ$ for every $p\in \Psi_f(a Q)$. The full claim then follows by taking $\eta = (2c)^{-1}\eta'$.
  
  %If so, then \eqref{eq:V-contained-sub} implies that for every $x\in \Gamma$ such that $V(x,\eta r)$ intersects $aQ$ and every $q\in  V(x,\eta r)\cap aQ$, we have $V(x,\eta r) \subset V(q, \eta'r)\subset bQ$.
  
  Let $I$ be the base of $Q$ and let $m=\slope(Q)$. After a left translation, we may suppose that $I = [-\delta_x(Q)/2,\delta_x(Q)/2]$, that $P_Q = \Gamma_\lambda$, where $\lambda(x) = m x$, and that $$R_Q=\left\{(x,0,z):x\in I, \Big|z + \frac{m}{2} x^2\Big|\le \frac{\delta_z(Q)}{2}\right\}.$$
  
  Let $v = (x_0, 0, z_0) \in aQ$. Then 
  $$\left|z_0 + \frac{m}{2} x_0^2\right|\le \frac{a^2}{2}\delta_z(Q).$$
  Let  $c = z_0 + \frac{m}{2} x_0^2$ and let $g(x) = c - \frac{m}{2} x^2$, 
  so that the graph of $g$ is the characteristic curve of $P_Q$  through $v$.
  Let $d = \frac{b^2}{2}-\frac{a^2}{2}$ and let
  $$T := \{ (x,0,z) | x\in bI, |z - g(x)| \le d\delta_z(Q)\}.$$
  If $(x,0,z)\in T$, then 
  $$\left|z + \frac{m}{2}x^2\right| \le |c| + d\delta_z(Q) \le\frac{b^2}{2} \delta_z(Q),$$
  so $T\subset bQ$. We claim that there is a $\nu$ depending only on $L$ such that $V(p,\nu r)\subset T$. 
  
  By Lemma \ref{lem:proj-quad}, 
  $$V(p,\nu r) \subset \left\{ (x,0,z) | |x-x_0|\le \nu r, |z - z_0 + (x-x_0) f(v)| \le (\nu r)^2\right\}.$$
  If $(x,0,z)\in V(p,\nu r)$, then
  \begin{equation}\label{eq:z-gx}
    |z - g(x)| \le (\nu r)^2 + |g(x) - z_0 + (x-x_0) f(v)| \le \nu^2\delta_z(Q) + |\ell(x)|,
  \end{equation}
  where $\ell(x) = g(x) - z_0 + (x-x_0) f(v)$. Note that $\ell(x_0) = 0$. By  Lemma~\ref{lem:pseudoquad-linfty},
  $$|\ell'(x_0)| = |g'(x_0) + f(v)| \overset{\eqref{eq:cc-pde}}{=} |f(v)-\lambda(v)| \le \eta \max\left \{\sqrt{\delta_z(Q)}, \frac{\delta_z(Q)}{\delta_x(Q)}\right\}=\eta \frac{\delta_z(Q)}{r}.$$
  Finally, $\ell''(x_0) = -m$. Therefore, for $|x-x_0|\le \nu r$,
  \begin{equation}\label{eq:ell-bound}
    |\ell(x)| \le \nu r \eta \frac{\delta_z(Q)}{r} + (\nu r)^2\frac{|m|}{2} \overset{\eqref{eq:parabolic-bound}}{\le}\nu \eta \delta_z(Q) + \nu^2 \delta_z(Q)\left(\frac{L}{\sqrt{1-L^2}}+1\right).
  \end{equation}
  If $0<\nu<b-a$ is sufficiently small and $(x,0,z)\in V(p,\nu r)$, then \eqref{eq:z-gx} and \eqref{eq:ell-bound} imply $|z-g(x)| \le d\delta_z(Q)$ and thus $(x,0,z)\in T$. Therefore, $V(p,\nu r)\subset bQ$, as desired.  
%   As $p \in aQ$ and $r \leq \tau \delta_x(Q)$, if $\tau \leq b-a$, then $x(S) \subseteq x(bQ)$ so it suffices to prove that $S$ lies within the vertical bounds given by $g_1,g_2$.

%   We have that
%   \begin{align*}
%     g_1(0) - \frac{1}{2}ms^2 + (b-a) \delta_z(Q) \leq t \leq g_2(0) - \frac{1}{2}ms^2 - (b-a) \delta_z(Q)
%   \end{align*}
%   The top edge of $S$ is parameterized by the line $\ell(x) = t + \lambda r^2 - f(p) (x - s)$.  When $x \in [s-r,s+r]$, we have
%   \begin{align*}
%     \ell(x) &= t + \lambda r^2 - (f(p) - ms) (x-s) - ms(x-s) \\
%     &\leq \lambda r^2 - (f(p) - ms)(x-s) + g_2(0) - \frac{1}{2}ms^2 - ms(x-s) - (b-a)\delta_z(Q) \\
%     &= \lambda r^2 - (f(p) - ms)(x-s) + g_2(0) - \frac{1}{2}mx^2 + \frac{1}{2} m(s-x)^2 - (b-a)\delta_z(Q) \\
%     &= g_2(x) + \lambda r^2 - (f(p) - ms)(x-s) + \frac{1}{2} m(s-x)^2 - (b-a)\delta_z(Q) \\
%     &\leq g_2(x) + \lambda \tau^2 \delta_z(Q) - \tau \eta \delta_z(Q) + \frac{1}{2} mr^2 - (b-a) \delta_z(Q),
%   \end{align*}
%   where in the last inequality, we used Lemma \ref{lem:pseudoquad-linfty} and \eqref{eq:minmax}.  We then use Lemma \ref{lem:parabolic-bound} to get
%   \begin{align*}
%     \ell(x) &\overset{\eqref{eq:parabolic-bound}}{\leq} g_2(x) + \lambda \tau^2 \delta_z(Q) - \tau \eta \delta_z(Q) + \left( \frac{L}{\sqrt{1-L^2}} + 1 \right) \tau^2 \delta_z(Q) - (b-a) \delta_z(Q) \\
%     &\leq g_2(x)
%   \end{align*}
%   when $\tau$ is taken sufficiently small.

%   Similarly, one can prove that the bottom edge of $S$ lies above $g_1$.  Thus, $V(p,r) \subseteq S \subseteq bQ$, proving \eqref{eq:V-contained-1}.
\end{proof}

Next, we consider pseudoquads that are part of a PSFC and prove Lemma~\ref{lem:nearby-contain}. Let $\Delta$ be a PSFC, let $S_i$ be as in \eqref{eq:def-Si}, and let $F_i=\min S_i$. For $p \in V_0$, let $\kappa_{\Gamma,p}$ be a function whose graph is a characteristic curve of $\Gamma$ going through $p$, i.e., a solution of \eqref{eq:cc-pde} with the initial condition $\kappa_{\Gamma,p}(x(p)) = z(p)$. For $w\in F_i$, let $R_w=R_{Q_w}$ and $P_w = P_{Q_w}$. Let $\rho_w$ be the affine function such that $R_w=\Gamma_{\rho_w}$. Since $Q_w$ is part of a PSFC, it has a $\lambda$--approximating plane $\Lambda_w$ that satisfies Proposition~\ref{prop:Omega-control}, and we let $l_w$ be the corresponding affine function. The construction of $P_w$ and $\Lambda_w$ are different, so they need not be the same plane, but they must be close, as in the following lemma.

\begin{lemma}\label{lem:two-close-planes}
  Let $\Delta$ be a PSFC and let $w\in \cV(\Delta)$. Let $I=x(Q_w)$. Then, for any $q\in 4Q_w$,
  $$\|\kappa_{P_w,q} - \kappa_{\Lambda_w,q}\|_{L_\infty(4I)} \leq \frac{3}{32} \delta_z(Q_w)$$
  and
  $$\|\kappa_{P_w,q} - \kappa_{\Gamma,q}\|_{L_\infty(4I)} \leq \frac{1}{8} \delta_z(Q_w).$$
\end{lemma}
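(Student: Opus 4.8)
The plan is to exploit that the characteristic curves of a fixed vertical plane form a parallel family of parabolas — any two differ by an additive constant. This lets me reduce the $q$--dependent estimate $\|\kappa_{P_w,q}-\kappa_{\Lambda_w,q}\|_{L_\infty(4I)}$ to a single comparison between one reference characteristic curve of $P_w$ and one reference characteristic curve of $\Lambda_w$, both of which I will show lie within $\zeta_0\delta_z(Q_w)$ of the lower bound $g_1$ of $Q_w$ on $4I$, where $\zeta_0:=\tfrac{1}{32r^2}$. The two claimed inequalities then follow from two applications of the triangle inequality. There is no hard analytic input: the statement is a soft consequence of rectilinearity together with the already-established Lemma~\ref{lem:full approximating planes}.

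Concretely, I would first record that, $\Delta$ being a PSFC, it is $\mu$--rectilinear with $\mu\le\zeta_0$ and its parameters $\eta,R$ satisfy Proposition~\ref{prop:Omega-control} with $\zeta=\zeta_0$; hence both \eqref{eq:def rectilinear 1 and 2} and the characteristic-curve estimate \eqref{eq:approx-ccurve} of Lemma~\ref{lem:full approximating planes} hold with constant $\zeta_0$. Write $I=[a,b]=x(Q_w)$, let $g_1$ be the lower bound of $Q_w$ (a characteristic curve of $\Gamma$ defined on $4I$), let $h_1$ be the lower bound of $R_w$ (a characteristic curve of $P_w$), and set $u_1=(a,0,g_1(a))$, which lies in $Q_w\subseteq 4Q_w$ by Lemma~\ref{lem:Q-2Q} and the definition of $\rho Q$. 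Then \eqref{eq:def rectilinear 1 and 2} gives $\|h_1-g_1\|_{L_\infty(4I)}\le\zeta_0\delta_z(Q_w)$, and taking $p_1:=\kappa_{\Lambda_w,u_1}$, the estimate \eqref{eq:approx-ccurve} applied at $u=u_1$ (with $g_1$ as the characteristic curve of $\Gamma$ through $u_1$) gives $\|p_1-g_1\|_{L_\infty(4I)}\le\zeta_0\delta_z(Q_w)$; hence $\|h_1-p_1\|_{L_\infty(4I)}\le 2\zeta_0\delta_z(Q_w)$.

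Now I fix $q=(x_q,0,z_q)\in 4Q_w$ and note $x_q\in x(4Q_w)=4I$. Since $\kappa_{P_w,q}$ and $h_1$ are characteristic curves of the same plane $P_w$, their difference is the constant $z_q-h_1(x_q)$; similarly $\kappa_{\Lambda_w,q}-p_1\equiv z_q-p_1(x_q)$. Subtracting, $\kappa_{P_w,q}(x)-\kappa_{\Lambda_w,q}(x)=\varphi(x)-\varphi(x_q)$ where $\varphi:=h_1-p_1$, so for $x\in 4I$ this is at most $2\|\varphi\|_{L_\infty(4I)}\le 4\zeta_0\delta_z(Q_w)$; since $r>10$, $4\zeta_0=\tfrac{1}{8r^2}<\tfrac{3}{32}$, which is the first inequality. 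For the second, I apply \eqref{eq:approx-ccurve} at $u=q$ to get $\|\kappa_{\Lambda_w,q}-\kappa_{\Gamma,q}\|_{L_\infty(4I)}\le\zeta_0\delta_z(Q_w)$ and add it to the first bound; the total is $\le 5\zeta_0\delta_z(Q_w)=\tfrac{5}{32r^2}\delta_z(Q_w)<\tfrac18\delta_z(Q_w)$.

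The only points requiring care are bookkeeping: the set inclusions $u_1\in 4Q_w$ and $x(q)\in 4I$; the fact that every characteristic curve of $P_w$ — $h_1$, $\kappa_{P_w,q}$, and $\kappa_{P_w,u_1}$ alike — has quadratic coefficient $-\tfrac12\slope(Q_w)$ and the same linear coefficient, so that their pairwise differences really are constants; and keeping track of the accumulated constants, which stay far below $\tfrac{3}{32}$ and $\tfrac18$ because $\zeta_0=\tfrac{1}{32r^2}$ with $r>10$. (One can run the same argument with the upper bounds $g_2,h_2$ in place of $g_1,h_1$, but it is not needed.) If anything is the "main obstacle," it is just recognizing at the outset that this lemma needs no new geometric estimate — only the parallelism of characteristic curves within a vertical plane and the bounds already in hand.
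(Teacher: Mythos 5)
Your proof is correct and takes essentially the same route as the paper: both exploit that characteristic curves within each vertical plane are parallel to remove the $q$--dependence, then apply the triangle inequality through $g_1$ and $h_1$ using rectilinearity and the characteristic-curve bound \eqref{eq:approx-ccurve} from Lemma~\ref{lem:full approximating planes}. The only (harmless) difference is that you anchor at a fixed corner point, picking up a factor $2$ that the paper avoids by anchoring at the point of $g_1$ directly below $q$ — your tighter constants $\mu,\zeta\le\frac{1}{32r^2}$ absorb this loss, so the stated bounds still follow.
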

Indeed, this lemma holds for any pseudoquad $Q$ that satisfies Proposition~\ref{prop:Omega-control}.
\begin{proof}
  Let $g_1,g_2,h_1,h_2\from 4 I\to \R$ be the functions parameterizing the boundary of $Q_w$, so that $Q_w = \{(x,0,z) | x\in I, z\in [g_1(x),g_2(x)]\}$ and $R_w = \{(x,0,z) | x\in I, z\in [h_1(x),h_2(x)]\}$. 
  
  Let $q\in 4Q_w$ and let $p = (x(q), 0, g_1(q))$ so that $p$ lies in the graph of $g_1$; we can take $g_1 = \kappa_{\Gamma,p}$. Since $\Lambda_w$ and $P_w$ are vertical planes,
  $$\kappa_{\Lambda_w,p} - \kappa_{P_w,p} = \kappa_{\Lambda_w,q} - \kappa_{P_w,q},$$
  and we have
  \begin{equation}\label{eq:two-close-one}
    \|\kappa_{P_w,p} - \kappa_{\Lambda_w,p}\|_{\infty} \leq \|\kappa_{P_w,p} - h_1\|_{\infty} + \|h_1 - g_1\|_{\infty} + \|g_1 - \kappa_{\Lambda_w,p}\|_{\infty}
  \end{equation}
  where all norms are in $L_\infty(4I)$. By the rectilinearity of $Q_w$, the first two terms are each at most $\frac{1}{32}\delta_z(Q)$; the last term is at most $\frac{1}{32}\delta_z(Q)$ by Proposition~\ref{prop:Omega-control}. This proves the first inequality.
  
  Thus, by \eqref{eq:two-close-one} and Proposition~\ref{prop:Omega-control},
  \begin{align*}
    \|\kappa_{P_w,q} - \kappa_{\Gamma,q}\|_{\infty} &\leq \|\kappa_{P_w,q} - \kappa_{\Lambda_w,q}\|_{\infty} + \|\kappa_{\Lambda_w,q} - \kappa_{\Gamma,q}\|_{\infty}\\
    &\le \frac{3}{32}\delta_z(Q_w) + \frac{1}{32}\delta_z(Q_w).
  \end{align*}
  This proves the lemma.
\end{proof}
Note that, by \eqref{eq:cc-pde}, this implies
$$\|\rho_w - l_w\|_{L_\infty(4Q_w)}\lesssim \frac{\delta_z(Q_w)}{\delta_x(Q_w)}.$$

We use Lemma~\ref{lem:two-close-planes} to prove Lemma~\ref{lem:nearby-contain}, which states that if $v,w \in F_i$, \(\delta_x(Q_w) \leq \delta_x(Q_v)\), and $Q_w \cap 3Q_v \neq \emptyset$, then $Q_w \subset 10 Q_v$.
\begin{proof}[Proof of Lemma~\ref{lem:nearby-contain}]
  Let $I_v=x(Q_v)$ and $I_w=x(Q_w)$ be the bases of $Q_v$ and $Q_w$. Since $Q_w\cap 3Q_v$ is nonempty, $I_w\subset 4I_v$. Let $c_v$ be the center of $R_v$ and let $\gamma_v = \kappa_{P_v,c_v}$, so that if $\ell = \frac{\delta_z(Q_v)}{2}$, then 
  $$rQ_v = \left\{(x,z)\mid x\in I_v, |z-\gamma_v(x)|\le  r^2\ell\right\}.$$
  Likewise, let $c_w$ be the center of $Q_w$ and $\gamma_w = \kappa_{P_w,c_w}$. We claim that $|\gamma_v(x)-\gamma_w(x)|\le 20\ell$ for $x\in I_w$.
  
  Let $q=(x_0,z_0)\in Q_w\cap 3Q_v$.
  Since $\gamma_v$ and $\kappa_{P_v, q}$ are parallel, we have $\gamma_v(x) = \kappa_{P_v, q}(x) + d$ for all $x$, where 
  $d=\gamma_v(x_0) - z_0$. Since $q\in 3Q_v$, we have $|d|\le 9\ell$. 
  
  Similarly, $\gamma_w(x) = \kappa_{P_w, q}(x) + d',$ and since $q\in Q_w$ and $\delta_z(Q_w)\le 4\delta_z(Q_v)$,
  $$|d'| = |\gamma_w(x_0) - z_0|\le \delta_z(Q_w)\le 8\ell.$$
  
  Therefore, by Lemma~\ref{lem:two-close-planes}, for all $x\in I_w$,
  \begin{align*}
    |\gamma_v(x)-\gamma_w(x)| &\le |d| + |d'| + |\kappa_{P_w, q}(x) - \kappa_{P_v, q}(x)| \\
    & \le 17\ell + |\kappa_{P_w, q}(x) - \kappa_{\Gamma, q}(x)| + |\kappa_{\Gamma, q}(x)- \kappa_{P_v, q}(x)|\\
    & \le 17\ell + \frac{\delta_z(Q_w)}{8} + \frac{\delta_z(Q_v)}{8} \le 20\ell.
  \end{align*}
  
  Let $p=(x, 0, z)\in Q_w$. Then $x\in I_w$ and $|z-\gamma_w(x)|\le \delta_z(Q_w)\le 8\ell$, so $|z - \gamma_v(x)| \le 28\ell$. Therefore $p\in 10Q_v$ and thus $Q_w\subset 10Q_v$.
\end{proof}

\bibliographystyle{alpha}
\bibliography{beta4}

\end{document}